\author{ Luca Arcidiacono\thanks{Zentrum Mathematik, Fakult\"at f\"ur Mathematik, Technische Universit\"at M\"unchen,
Boltzmannstr. 3, D-85748 Garching bei M\"{u}nchen}~, Maximilian Engel\footnotemark[1]~, Christian Kuehn\footnotemark[1]}
\title{Discretized Fast-Slow Systems \\ 
near Pitchfork Singularities}
\date{\today}
\numberwithin{equation}{section}
\newtheorem{theorem}{Theorem}[section]
\newtheorem{lemma}[theorem]{Lemma}
\newtheorem{proposition}[theorem]{Proposition}
\newtheorem{remark}[theorem]{Remark}
\newcommand{\fa}          {\quad \text{for all } \,}
\newcommand{\rme}{\mathrm{e}}
\newcommand{\rmd}{\mathrm{d}}
\newcommand{\rmD}{\mathrm{D}}
\newcommand{\nwc}{\newcommand}
\nwc{\red}[1]{\textcolor{red}{#1}}
\nwc{\blue}[1]{\textcolor{blue}{#1}}
\nwc{\green}[1]{\textcolor{olive}{#1}}
\nwc{\txtin}{\textnormal{in}}
\nwc{\txtout}{\textnormal{out}}
\let\epsilon=\varepsilon
\newcommand{\R}{\mathbb{R}}
\newcommand{\bigo}[1]{\mathcal{O}\left(#1\right)}
\def\txta{{\textnormal{a}}}
\def\txtr{{\textnormal{r}}}
\DeclareMathOperator*{\argmin}{arg\,min}
\DeclareMathOperator*{\dist}{dist}
\begin{document}

\maketitle

\begin{abstract}
Motivated by the normal form of a fast-slow ordinary differential equation exhibiting a pitchfork singularity we consider the discrete-time dynamical system that is obtained by an application of the explicit Euler method. Tracking trajectories in the vicinity of the singularity we show, how the slow manifold extends beyond the singular point and give an estimate on the contraction rate of a transition mapping. The proof relies on the blow-up method suitably adapted to the discrete setting where a key technical contribution are precise estimates for a cubic map in the central rescaling chart. 
\end{abstract}

{\bf Keywords:} pitchfork bifurcation, slow manifolds, invariant manifolds,  
loss of normal hyperbolicity, blow-up method, discretization.

{\bf Mathematics Subject Classification (2010):} 34E15, 37M99, 37G10, 34C45, 39A99.

\subsection*{Acknowledgments}
This work was supported by the German Research Foundation (DFG) via the SFB-TRR 109. Luca Arcidiacono acknowledges support from the graduate program TopMath of the Elite Network of Bavaria and the TopMath Graduate Center of TUM Graduate School at Technische Universität München.

\section{Introduction} \label{sec:intro}
We study the dynamical system generated by the two-dimensional cubic polynomial map 
\begin{equation}\label{disc}
P: 
\begin{pmatrix}
x \\ y
\end{pmatrix} \mapsto 
\begin{pmatrix}
\bar{x} \\ \bar{y}
\end{pmatrix} = 
\begin{pmatrix}
x+h \left( x(y-x^2)+\lambda\epsilon \right)  \\
y+h\epsilon
\end{pmatrix}
\end{equation}
\\
for small \(\epsilon, h > 0\). The parameter $\epsilon$ implies a time scale separation between the \emph{fast variable} $x$ and the \emph{slow variable} $y$, while the parameter $h$ is viewed as the stepsize in an explicit Euler method for the ordinary differential equation (ODE) 
\begin{align} \label{cont}
\begin{array}{rcrcl}
 \frac{\rmd x}{\rmd \tau} &=& \dot{x} &=& x(y-x^2)+\lambda\epsilon\,, \\
 \frac{\rmd y}{\rmd \tau} &=& \dot{y} &=& \epsilon\,,
\end{array}
\end{align}
which is the normal form of a fast-slow system exhibiting a \emph{pitchfork singularity} at the origin $(x,y) = (0,0)$. Indeed, for $\epsilon = 0$, we can view $y$ as a bifurcation parameter for the flow in the $x$-variable: when $y < 0$, equation~\eqref{cont} has a hyperbolic sink at $(0, y)$ and, when $y > 0$, there are three equilibria with $(0, y)$ unstable and the other two, $(- \sqrt{y},y)$ and $(\sqrt{y},y)$, locally asymptotically stable. The origin $(x,y) = (0,0)$ is called singular since hyperbolicity is lost at this point, and this is also the case for the map~\eqref{disc}. We will analyze the dynamics close to the origin for small $\epsilon, h > 0$. Since we focus on the local behaviour around the singularity, we will neglect potential higher order nonlinearities in the majority of this work, but will show how to adapt the proof when including those. 

For our analysis we will make use of the \emph{blow-up} method~\cite{Du78,Du93}, which has turned out to be a successful tool for treating singular points of fast-slow systems. It was first applied to fast-slow systems by Dumortier and 
Roussarie~\cite{DuRo96} to gain insight in the dynamics around non-hyperbolic equilibria. The method uses a non-injective transformation that maps a higher dimensional object such as a sphere onto the non-hyperbolic equilibrium constituting the singularity. The dynamics on this larger, blown up version of the singularity may then be desingularized by an appropriate rescaling of time and exhibit (partially) hyperbolic behaviour. Then one can use dynamical systems techniques to analyze the dynamics in blown-up space. Finally, a typical result allows one to extend invariant manifolds past the singular point in the blown down system; see e.g.~\cite[Chapter~7]{ku2015} for an introduction and~\cite{DeMaesschalckDumortier7,
DeMaesschalckDumortier4,DeMaesschalckWechselberger,GucwaSzmolyan,ks2011,KuehnUM,KuehnHyp} for an, of course non-exhaustive, list of different applications to 
planar fast-slow systems.

By means of the blow-up method Krupa and Szmolyan \cite{ks2011, ks2001/2} analyze different kinds of singularities in fast-slow ODEs, i.e. fold, canard, transcritical and pitchfork singularities, and show how certain invariant manifolds, so-called \emph{slow manifolds}, extend around the singular points for small $\epsilon > 0$. In the case of fold points, Nipp and Stoffer~\cite{ns2013} transform the blow-up technique to the corresponding explicit Runge-Kutta, in particular Euler, discretization and prove the extension of slow manifolds for the discrete time system around the singularity. Whereas they apply an abstract existence theory for invariant manifolds developed in~\cite{ns2013}, Engel and Kuehn \cite{EngelKuehn18} use direct estimates in the blow-up charts to prove the extension of slow manifolds for transcritical singularities. In both cases, a crucial aspect of the discretized blow-up lies in finding the right rescaling of the step size \(h\). 

In a similar spirit, we investigate, how trajectories of~\eqref{disc} behave near the origin and show how the slow manifold may be continued beyond the pitchfork singularity in the discrete setting. We prove that, depending on the sign of $\lambda$, trajectories starting in the vicinity of the single slow branch near $\{  (0,y) \in \mathbb{R}^2 \,:\, y < 0 \}$ are attracted exponentially to one of the parabolic branches near $\{  (\pm \sqrt{y},y) \in \mathbb{R}^2 \,:\, y > 0 \}$. Furthermore, for $\lambda=0$, we show that canard-type orbits can track the unstable branch $\{  (0,y) \in \mathbb{R}^2 \,:\, y > 0 \}$. Our analysis uses three charts that cover different parts of the blown up space around the singularity. We track trajectories through several checkpoints along a curve of fixed points of the cubic map and give estimates on the contraction of the transition mappings. In this way, we also give an alternative way of proof to the result in \cite{ks2001/2} for the ODE case, by letting the step size $h \to 0$ in~\eqref{disc}.

This paper is structured as follows. After giving a short introduction to continuous time fast-slow systems and pitchfork singularities in Section \ref{sec:backgroundmainresult}, we formulate the setup and main results of this paper at the start of Section~\ref{sec:mainanalysis}. The major part of Section~\ref{sec:mainanalysis} is dedicated to the proof of the main theorem which is divided into several steps. We start with $\lambda\neq 0$. The relevant coordinate changes are discussed in Section~\ref{sec:charts}. Sections~\ref{sec:entering} and~\ref{sec:exiting} describe the dynamics in the vicinity of the branches of the critical manifold, which allows to define the slow manifolds and control contractivity of the transition map. In Section~\ref{sec:rescaling} we describe the continuation of a slow manifold through the blown-up singularity by direct estimates on the trajectories. Finally Section~\ref{sec:blowdown} combines the results in a blown down version, which finishes the proof for $\lambda\neq 0$. The required modifications to cover the canard case $\lambda=0$ are outlined in Section~\ref{canards}. Section \ref{sec:hot} shows how the previous results can be adapted to a more general setting.

\section{Continuous-time fast-slow systems with pitchfork singularity} 
\label{sec:backgroundmainresult}

\subsection{Fast-slow ODEs}
Fast slow systems occur in various fields of science such as neurobiology or chemistry and are usually found as a system of ODEs with two time scales, this means, they are of the form 
\begin{align} \label{fastsys}
\begin{array}{rcrcl}
 \frac{\rmd x}{\rmd t} &=& x' &=& f(x,y,\epsilon)\,, \\
\frac{\rmd y}{\rmd t}&=& y' &=& \epsilon g(x,y,\epsilon)\,, \quad \ x \in \mathbb{R}^m, 
\quad y \in \mathbb R^n, \quad 0 < \epsilon \ll 1\,,
\end{array}
\end{align}
where $f,g,$ are $C^k$-functions with $k \geq 3$. The small parameter $\epsilon$ consitutes the separation between two time scales. 
The variables $x$ and $y$ are often called the \textit{fast} variable(s) and 
the \textit{slow} variable(s) respectively. The time variable 
in~\eqref{fastsys}, denoted by $t$, is termed the \textit{fast} time scale. 
By a change of variables, one can also consider the \emph{slow} time scale \(\tau = \epsilon t\) and rewrite \eqref{fastsys} as
\begin{align}\label{slowsys}
\begin{array}{rcrcl}
\epsilon \frac{\rmd x}{\rmd \tau} &=& \epsilon \dot{x} &=& f(x,y,\epsilon)\,, \\
\frac{\rmd y}{\rmd \tau}&=& \dot{y} &=&  g(x,y,\epsilon)\,.
\end{array}
\end{align} 
The singular limit \(\epsilon = 0\) can be seen from two different perspectives corresponding with the two time scales. 
Setting \(\epsilon = 0 \) in \eqref{fastsys}
yields 
 \begin{align}\label{layer}
\begin{split}
x'  &=  f(x,y,0)\,, \\
y'  &= 0\,,
\end{split}
\end{align}
which is called the \textit{layer problem} (or \textit{fast subsystem}), since we can view the equation layer-wise parametrized by the constant \(y\).
Setting \(\epsilon = 0\) in \eqref{slowsys} gives the differential algebraic equations
\begin{align}\label{reduced}
\begin{split}
0 &=  f(x,y,0)\,, \\
y' &=  g(x,y,0)\,,
\end{split}
\end{align}
called the \textit{reduced problem} (or \textit{slow subsystem}).
The flow of~\eqref{reduced}, the so-called \emph{slow flow}, is restricted to the set 
$$ S_0 := \{(x,y) \in \mathbb{R}^m \times \mathbb{R}^n ~|~~ f(x,y,0) = 0\}\,,$$
which consists of equilibria of the layer 
problem~\eqref{layer}.
We refer to this set as the \emph{critical set} or often also \emph{critical manifold}, in case it is a manifold. 

A subset $S \subset S_0$ is called \emph{normally hyperbolic} if the matrix \(\rmD_xf(x,y,0) \in\mathbb{R}^{m\times m}\) has no eigenvalue with vanishing real part for all \((x,y) \in S\). In the vicinity of normally hyperbolic submanifolds of \(S_0\), the dynamics can be very well described for sufficiently small $\epsilon > 0$: \textit{Fenichel 
Theory} \cite{Fenichel4,Jones,ku2015,WigginsIM} gives the existence of a locally invariant manifold, the \emph{slow manifold} $S_{\epsilon}$, which lies close to \(S_0\) and maintains the stability properties of the layer problem~\eqref{layer}. Furthermore, the restriction of~\eqref{slowsys} to $S_{\epsilon}$ is a regular perturbation of the reduced problem~\eqref{reduced}.

However, points $p \in \mathbb{R}^m \times \mathbb{R}^n$, which do not satisfy normal hyperbolicity are called \emph{singularities} in this context and are more delicate to handle. From the view point of the layer equation~\eqref{layer} singularities often correspond to bifurcations of the fast subsystem, and the breakdown of normal hyperbolicity is typically associated with the intersection of multiple parts of \(S_0\) at the point $p$ where degeneracy of \(\rmD_xf(p)\) follows from the Implicit Function Theorem. In the case of a pitchfork singularity, which we will consider in the following, we are precisely in such a situation.

\subsection{Pitchfork singularity in continuous time}

We consider a two-dimensional fast-slow system of the form \eqref{fastsys} where the critical manifold resembles the shape of a pitchfork, and we call the associated non-hyperbolic singularity a \emph{pitchfork singularity}.
Such a situation occurs when the vector field $f$ satisfies
\[ \begin{array}{rr}
		f(0,0,0)=0, & \qquad	\partial_xf(0,0,0)=0,\\
	\partial_yf(0,0,0)=0, &\qquad 	\partial_{xx}f = 0, \\
	\partial_{xxx}f(0,0,0) \neq 0,  &\qquad  \partial_{xy}f(0,0,0)\neq 0.
\end{array} \]
These conditions guarantee that, for \(\epsilon = 0\), there is a non-hyperbolic equilibrium at the origin where the critical manifold has a transversal self-intersection and one part of the branches crosses the other tangentially to the $x$-direction. In particular, we assume that \[ \partial_{xxx}f(0,0,0) < 0, \qquad  \partial_{xy}f(0,0,0) > 0 ~,\] such that the singularity is \emph{supercritical}.
Furthermore, we assume \(g(0,0,0) > 0\) such that the slow dynamics pass through the origin in positive $y$-direction. In other words, we consider the problem of how the slow dynamics behave in the vicinity of a splitting into three critical branches (see \cite[Figure 4]{ks2001/2}). The case of a \emph{subcritical} pitchfork singularity or the situation of \(g(0,0,0) < 0 \) are less challenging, since the dynamics only heads into the direction of one critical branch, and will therefore not be treated in this paper. 

There is a linear change of coordinates (see \cite{ks2001/2}) which brings the system into the normal form
\begin{align}\label{eq:normalform}
\begin{split}
\dot{x} &= x(y-x^2)+\lambda\epsilon+ h_1(x,y,\epsilon)\,, \\
\dot{y} &= \epsilon(1+h_2(x,y,\epsilon))\,,
\end{split}	
\end{align}
where \(h_1 \) and \(h_2\) satisfy \(h_1(x,y,\epsilon) = \bigo{x^2y, xy^2, \epsilon x, \epsilon y, \epsilon^2}, ~ h_2(x,y,\epsilon) = \bigo{x,y,\epsilon}\).
Since we are mainly interested in the local dynamics around the origin, we may initially ignore the higher order terms and only consider the system
\begin{align}\label{continuous2}
\begin{split}
\dot{x} &= x(y-x^2)+\lambda\epsilon\,, \\
\dot{y} &= \epsilon\,.
\end{split}	
\end{align} 
The critical manifold is given as \[ S_0 = \{(x,y) \in \mathbb{R}^2 \,:\, y=x^2\} \cup \{(x,y) \in \mathbb{R}^2 \,:\, x = 0\} ~.\]
For negative \(y\) there is only one stable equilibrium at $x = 0$, while for positive \(y\) we have an unstable equilibrium at \(x=0\) and two locally asymptotically stable ones at \(x= \pm \sqrt{y}\).

\section{Pitchfork singularity in discrete time} \label{sec:mainanalysis}
A time-discretization of equation~\eqref{continuous2} by the explicit Euler method with time step size \(h > 0\) yields the map
\begin{equation}\label{disc2}
P: 
\begin{pmatrix}
x \\ y
\end{pmatrix} \mapsto 
\begin{pmatrix}
\bar{x} \\ \bar{y}
\end{pmatrix} = 
\begin{pmatrix}
x+h \left( x(y-x^2)+\lambda\epsilon \right)  \\
y+h\epsilon
\end{pmatrix}\,.
\end{equation}
As in continuous time, the system induced by~\eqref{disc2} clearly possesses the critical manifold
$$ S_0 = \{(x,y) \in \mathbb{R}^2 \,:\, y=x^2\} \cup \{(x,y) \in \mathbb{R}^2 \,:\, x = 0\}\,,$$
consisting of fixed points of~\eqref{disc2} for $\epsilon = 0$. We split the set $S_0$ into the four branches
\begin{eqnarray}
&&S_{a}^0 = \{x=0, y < 0\}, \qquad S_{a}^+ = \{y=x^2, x > 0\}, \nonumber\\
&&S_{a}^- = \{y=x^2, x < 0\}, \qquad S_{r}^0 = \{x=0, y > 0\}. \label{eq:Snot}
\end{eqnarray}
By linearization we see that these four branches are normally hyperbolic as long as for $(x,y)\in S_0\setminus\{(0,0)\}$ we have 
\begin{equation}
\label{eq:condbigcutoff}
|1+hy|\neq 1.
\end{equation}
Since we want to restrict the analysis to the non-hyperbolic singularity at the origin $(x,y)=(0,0)$, we always assume that $h$ is chosen small enough so that~\eqref{eq:condbigcutoff} holds as well as the same stability properties as in the time-continuous case. For example, for a fixed initial condition with $y_0<0$, we have to ensure $1+hy_0>-1$ which yields the restriction $h<2/|y_0|$, which then implies that $S_{a}^0$ is normally hyperbolic and locally attracting. Note that we shall still use the notation in~\eqref{eq:Snot} in this context.  In contrast to the continuous case the fixed points on \(S_a^0\) are not globally stable, but only inside an interval around zero of size \(\sqrt{\frac{2}{h}+y}\). Outside this interval solutions diverge in growing oscillations. Compare also with the reasoning of Lemma \ref{lem:transition1} in Section \ref{sec:entering}.
 Due to normal hyperbolicity and according to~\cite[Theorem 4.1]{HPS77}, for \(\epsilon \) sufficiently small, there exist corresponding invariant slow manifolds  
\(S_{a, \epsilon, h}^0 \)
\(S_{a, \epsilon, h}^+ \)
\(S_{a, \epsilon, h}^- \)
\(S_{r, \epsilon, h}^0 \).
However, the four branches of the critical manifold $S_0$ intersect at the origin $(x,y)=(0,0)$, where we have
\(\rmD_x P(0, 0) = 1 \), i.e.~we observe the loss of normal hyperbolicity as in the ODE case.
\subsection{Main result}
We want to investigate where points around \(S_{a, \epsilon, h}^0 \) get mapped to by iterations of \(P\) in order to find the continuation of \(S_{a, \epsilon, h}^0 \) beyond the singularity. For that purpose, fix some \(\rho  > 0 \), let \(J \subset \R\) be a small interval containing $0$ and define the section \(\Delta_{\textnormal{in}}\) around the point \((0,-\rho^2)\) on the critical branch \(S_a^0\) by
 \[ \Delta_{\textnormal{in}} = \{ (x,y) \in \mathbb{R}^2 ~| ~ y = - \rho^2, x \in J \} ~.\] 
In particular, we always assume that the initial condition is chosen on $\Delta_{\textnormal{in}}$ and $J$ is sufficiently small so that trajectories effectively start 
close to the attracting slow manifold \(S_{a, \epsilon, h}^0 \).

We are going to follow trajectories of \eqref{disc} starting in \(\Delta_{\textnormal{in}}\) up to height \(y= \rho^2\). Since the line \(\{y=\rho^2\}\) 
can only be reached in case \(\frac{2\rho^2}{\epsilon h} \in \mathbb{N}\), we introduce \(\tilde{\rho} \) as the closest reachable height, which then  satisfies \( |\rho ^2 - \tilde{\rho}^2 | < \epsilon h\). This allows us to define the sections
\begin{equation*} \label{Deltaout}
\Delta_{\textnormal{out}}^0 = \{ (x,y) \in \mathbb{R}^2 ~| ~ y = \tilde{\rho}^2, x \in  J \}  ~, ~~\Delta_{\textnormal{out}}^\pm = \{ (x,y) \in \mathbb{R}^2 ~| ~ y = \tilde{\rho}^2, x \mp \tilde{\rho} \in   J \}
\end{equation*} 
around the points \((\tilde{\rho}, \tilde{\rho}^2), (-\tilde{\rho}, \tilde{\rho}^2)\) or \((0, \tilde{\rho}^2)\) on the branches \(S_a^+, S_a^-\) or \(S_r^0\) respectively.
Depending on the sign of \(\lambda\) we will show that the transition maps \(\Pi^\pm: ~  \Delta_{\textnormal{in}} \to \Delta_{\textnormal{out}}^\pm\) or \(\Pi^0 : \Delta_{\textnormal{in}} \to \Delta_{\textnormal{out}}^0\) if \(\lambda =0 \) are well defined. The transition maps are given by the \(N\)-fold of \(P\) where \(N = [\frac{2\rho^2}{\epsilon h}] \) is the closest integer to \(\frac{2\rho^2}{\epsilon h}\).  For the discrete setting, induced by the map~\eqref{disc2}, we have the following main result (see Figure \ref{fig:main} for an illustration of the case \(\lambda >0\)).

\begin{theorem}\label{maintheorem}
Fix \(\rho > 0 \) and let \(\lambda \in \mathbb{R} \) . Then there are \(\epsilon_0, h_0 >0\), depending on $\lambda$, such that for all \(\epsilon \in (0,  \epsilon_0] \) and all \(h \in (0, h_0] \) 
the following holds. 
\begin{enumerate}
\item[(T1)] If \(\lambda >0 \), the set \(\Delta_{\txtin} \) (including the point  \(\Delta_{\txtin} \cap S_{a,\epsilon,h}^0\)) is mapped by \(\Pi^+\) into a subset of \(\Delta_{\txtout}^+\) that contains the point \(\Delta_{\txtout}^+ \cap S_{a,\epsilon,h}^+\) and  has a  width of order \(\bigo{\left(1-c \cdot h\right)^{\frac{K}{\epsilon h}} }\) for some constants \(c, K >0 \). ~~ 
\item[(T2)] If \(\lambda <0 \), an analogous statement holds with \(\Delta_{\txtout}^-\) , \(S_{a,\epsilon,h}^-\) and \(\Pi^-\) instead of \(\Delta_{\txtout}^+\), \(S_{a,\epsilon,h}^+\) and \(\Pi^+\).
\item[(T3)] If \(\lambda = 0\) the slow manifolds \(S_{a,\epsilon,h}^0\) and \(S_{r,\epsilon,h}^0\) coincide with the critical branches \(S_a^0\) and \(S_r^0\) and are connected by a canard solution, i.e. \(\Pi^0(\Delta_{\txtin} \cap S_{a,\epsilon,h}^0 ) \in S_{r,\epsilon,h}^0 \). The set \(\Delta_{\txtin}\) gets mapped by \(\Pi^0\) into \(\Delta_{\txtout}^0\) and its image has a width less than \(\Big(1- h^2 (\tfrac{\rho^2}{2})^2\Big)^{\tfrac{\rho^2}{2\epsilon h}} \cdot |J| \).
\end{enumerate}
\end{theorem}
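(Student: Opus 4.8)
The plan is to adapt the blow-up method to the map~\eqref{disc2}. I would use the quasi-homogeneous blow-up $(x,y,\epsilon)=(r\bar x,r^2\bar y,r^3\bar\epsilon)$, together with a rescaling of the step size $h$ by a suitable power of the blow-up parameter, and cover a neighbourhood of the origin by three charts: an \emph{entering} chart $K_1$ containing $\Delta_{\txtin}$ near $S_a^0$, a \emph{central rescaling} chart $K_2$ (where $\bar\epsilon\equiv1$), and an \emph{exiting} chart $K_3$ containing $\Delta_{\txtout}^\pm$ near $S_a^\pm$; the coordinates and the changes of chart $\kappa_{12},\kappa_{23}$ are those of \cref{sec:charts}. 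In each chart the relevant part of $S_0$ becomes a curve of fixed points of the rescaled map which is normally hyperbolic and attracting away from the blown-up singularity. Writing $\Pi^+=\kappa_{23}\circ\Pi_2\circ\kappa_{12}\circ\Pi_1$, with $\Pi_j$ the transition of the rescaled map through $K_j$ between suitable sections at the chart boundaries, the theorem reduces to (i) constructing the slow manifolds and establishing contraction towards them in $K_1$ and $K_3$, (ii) controlling $\Pi_2$ through the cubic map in $K_2$, and (iii) composing the estimates and blowing down.

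\textbf{Entering chart $K_1$.} Here the rescaled map is a perturbation, of size the rescaled step, of the Euler scheme for a desingularized planar vector field for which $S_a^0$ is an attracting curve of fixed points with linear fast-contraction rate $1-\bigo{h}$, bounded away from $1$ as soon as $|\bar y|$ is bounded away from $0$. Using the discrete invariant-manifold theory (\cite[Theorem~4.1]{HPS77}) and direct estimates in the spirit of \cref{lem:transition1}, I would obtain $S_{a,\epsilon,h}^0$ and show that $\Pi_1$ maps the entry section into an exponentially thin neighbourhood of the continuation of $S_{a,\epsilon,h}^0$, multiplying the width by $\bigo{(1-c_1h)^{K_1/(\epsilon h)}}$. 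This step is the discrete analogue of Fenichel's theorem and is comparatively routine.

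\textbf{Central chart $K_2$ --- the hard part.} After the chart-dependent rescaling of $h$, the map becomes a cubic map closely related to the explicit Euler discretization of $\dot x_2=x_2(y_2-x_2^2)+\lambda$, $\dot y_2=1$, with $y_2$ increasing monotonically over an interval of length $\bigo{\epsilon^{-2/3}}$, so that $\Pi_2$ consists of $\bigo{1/(\epsilon h)}$ iterations. Three points have to be established: (a) an invariant region for the cubic map in which the iteration neither blows up nor leaves a controlled tube around the relevant branch, which is where the constraints on $\epsilon_0,h_0$ (depending on $\lambda$) originate; (b) that the distinguished orbit continuing $S_{a,\epsilon,h}^0$ is, for $\lambda>0$, steered towards $\{y_2=x_2^2,\,x_2>0\}$ (and towards $\{y_2=x_2^2,\,x_2<0\}$ for $\lambda<0$), matching the entry data of $K_3$; (c) a bound on the contraction of $\Pi_2$ between nearby orbits. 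The difficulty is in (c): a naive propagation of the $\bigo{1}$ one-step consistency constant over $\bigo{1/(\epsilon h)}$ iterations is useless, so one must exploit the sign of $\partial_{x_2}\big(x_2(y_2-x_2^2)+\lambda\big)=y_2-3x_2^2$ --- negative along the relevant branch outside an $\bigo{1}$ window of $y_2$ --- to run a discrete variational (Gronwall-type) estimate showing that the width is multiplied through $K_2$ by at most an $\bigo{1}$ factor. Making these estimates for the cubic map precise and uniform in $\epsilon$ and $h$ is the technical heart of the proof and the reason the discretized blow-up is genuinely harder here than for the fold or transcritical singularity.

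\textbf{Exiting chart $K_3$, blow-down, and the cases $\lambda<0$, $\lambda=0$.} In $K_3$ the rescaled map is again a step-size perturbation of the Euler scheme for a desingularized field having $S_a^+$ (resp.\ $S_a^-$) as an attracting fixed-point curve with fast multiplier $1-\bigo{h}$; as in $K_1$ one obtains $S_{a,\epsilon,h}^\pm$ and a contraction factor $\bigo{(1-c_3h)^{K_3/(\epsilon h)}}$ for $\Pi_3$ up to $\Delta_{\txtout}^\pm$. Composing and blowing down then shows that $\Delta_{\txtin}$ (including $\Delta_{\txtin}\cap S_{a,\epsilon,h}^0$) is mapped by $\Pi^+$ into $\Delta_{\txtout}^+$, that its image contains $\Delta_{\txtout}^+\cap S_{a,\epsilon,h}^+$ (the distinguished orbit matches $S_a^+$ in $K_3$), and that it has width $\bigo{(1-ch)^{K/(\epsilon h)}}$ with $0<K<\rho^2$, the strict inequality reflecting that exponential contraction is accumulated only in the hyperbolic charts $K_1,K_3$ while $K_2$ contributes a bounded factor; this proves (T1), and (T2) follows from the symmetry $(x,\lambda)\mapsto(-x,-\lambda)$ of~\eqref{disc2}. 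For (T3) I would argue directly: $\lambda=0$ makes $\{x=0\}$ invariant under $P$, hence $S_{a,\epsilon,h}^0=S_a^0$, $S_{r,\epsilon,h}^0=S_r^0$, and $\Pi^0(\Delta_{\txtin}\cap S_{a,\epsilon,h}^0)=(0,\tilde\rho^2)\in S_{r,\epsilon,h}^0$ is the canard; for the width, the $x$-component of $P^n$ near $\{x=0\}$ is the composition of the maps $x\mapsto x\big(1+h(y_n-x^2)\big)$, the iterates remain in $J$ (same sign, smaller modulus) because $\sum_{k<n}\log(1+hy_k)\le h\sum_{k<n}y_k\le0$ for $y_k=-\rho^2+k\epsilon h$ and $n\le N$, so the derivative of $\Pi^0$ is at most $\prod_{k=0}^{N-1}(1+hy_k)$; pairing $k$ with $N-1-k$ gives $(1+hy_k)(1+hy_{N-1-k})\le1-h^2y_k^2$, and retaining the $\tfrac{\rho^2}{2\epsilon h}$ pairs with $|y_k|\ge\rho^2/2$ yields the bound $\big(1-h^2(\tfrac{\rho^2}{2})^2\big)^{\rho^2/(2\epsilon h)}|J|$. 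The remaining minor modifications for $\lambda=0$ inside the charts are those of \cref{canards}.
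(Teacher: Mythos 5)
Your overall architecture (blow-up including a rescaling of $h$, entering/rescaling/exiting charts, exponential contraction in $K_1$ and $K_3$, trapping in $K_2$, blow-down, the symmetry $x\mapsto -x$ for (T2), and a direct variational computation for (T3)) is the same as the paper's, and your (T3) argument is essentially identical to the one given in Section~\ref{canards} and correct. The problems are in the central chart, which you yourself identify as the heart. First, your blow-up weights are wrong: you write $(x,y,\epsilon)=(r\bar x,r^2\bar y,r^3\bar\epsilon)$, and your later statements (the $y_2$-range of length $\bigo{\epsilon^{-2/3}}$, the limiting ODE $\dot x_2=x_2(y_2-x_2^2)+\lambda$ with $\lambda$ at full size) show this is not a typo. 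For the pitchfork the correct weight is $\epsilon=r^4\tilde\epsilon$ (as in Section~\ref{sec:charts}): with weight $3$ the $x_2$- and $y_2$-components acquire mismatched effective step sizes ($hr^2$ versus $hr$), so the chart-$K_2$ map is not a single-step Euler scheme of any desingularized field and the passage is not desingularized. With the correct weights the constant term appears as $\tilde\lambda=\lambda\rho\,\delta^{1/4}=\lambda\epsilon^{1/4}\to0$ and the $y_2$-range is $\bigo{\delta^{-1/2}}$; the smallness of $\tilde\lambda$ is precisely what makes the $K_2$ analysis delicate, since the attracting fixed-point curve passes within $\tilde\lambda^{1/3}$ of the repelling axis $x_2=0$ near $y_2=0$.

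Second, your step (c) — that $\Pi_2$ multiplies widths by an $\bigo1$ factor because $y_2-3x_2^2<0$ ``along the relevant branch outside an $\bigo1$ window'' — is not justified as stated: the actual orbit does not stay near the branch after $y_2=0$. It lingers at $x_2=\bigo{\tilde\lambda^{1/3}}$ while the attracting branch jumps to $\approx\sqrt{y_2}$, so the one-step multipliers are $\approx 1+h_2y_2>1$ up to the take-off height $Y\sim\sqrt{2\log(1/\tilde\lambda)}$, which is unbounded as $\epsilon\to0$; the accumulated expansion is of order $\tilde\lambda^{-1/3}$, algebraic in $\epsilon^{-1}$ rather than $\bigo1$. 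This does not sink the theorem (the algebraic loss is absorbed by the exponential contraction in $K_1$ and $K_3$), but the claimed mechanism is wrong, and the genuinely hard content of your items (a) and (b) — an invariant region and the guarantee that the whole image exits $K_2$ at $y_2=\tfrac14\delta^{-1/2}$ with $x_2$ bounded away from $0$, i.e.\ inside the basin of $x_3=1$ in $K_3$ — is only asserted. The paper handles $K_2$ differently and more economically: no variational/Gronwall estimate at all, but a trapping argument based on the curve of positive fixed points $x_2^*(y_2)$ of the cubic map (Lemma~\ref{lem:curve_of_fp}), monotonicity and invariant intervals (Lemmas~\ref{lem:increasing_invariance}--\ref{lem:contraction}), and the explicit sets in Propositions~\ref{prop:I1_to_I2}--\ref{prop:I3_to_I4}; the exponentially small width in (T1) then comes solely from the exiting-chart contraction (Lemma~\ref{lem:transition3}), not from propagating thinness through $K_2$. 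To repair your proposal you would either have to supply the trapping estimates of Section~\ref{sec:rescaling} (with $\tilde\lambda$ small, not $\lambda$), or replace the $\bigo1$ claim in (c) by a quantitative bound on the expansion window and verify it is subexponential.
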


\begin{remark}
An analogous result for the continuous-time system \eqref{continuous2} has been shown in \cite{ks2001/2}.
The sections are chosen to be \(  \Delta_{\textnormal{out}}^\pm = \{ (x,y) \in \mathbb{R}^2 ~| ~ x = \rho^2,  y \mp \rho^4 \in   J \} \) such that the transition maps \(\Pi^\pm: ~  \Delta_{\txtin} \to \Delta_{\txtout}^\pm\) are well defined if \(\textnormal{sign}(\lambda) =\pm 1\). The image  \(\Pi^\pm(  \Delta_{\txtin}) \subset \Delta_{\txtout}^\pm\) is an interval about the corresponding point on the slow manifold $S_{a, \epsilon}^\pm$ and has a size of  \(\bigo{\rme^{-\frac{C}{\epsilon}}}\) for some constant $C >0$.
\end{remark}

Note that we will assume $h_0 \epsilon_0 \ll \rho$ to obtain meaningful time lengths for the dynamical analysis. For further details on the choice of \(h_0\) see Section \ref{sec:blowdown}, but notice that it immediately implies the stability restriction $h < \frac{2}{\rho^2}$ from the discussion below~\eqref{eq:condbigcutoff}.

\begin{figure}[ht] 
	\begin{center}

		\begin{overpic}[width=0.4\textwidth]{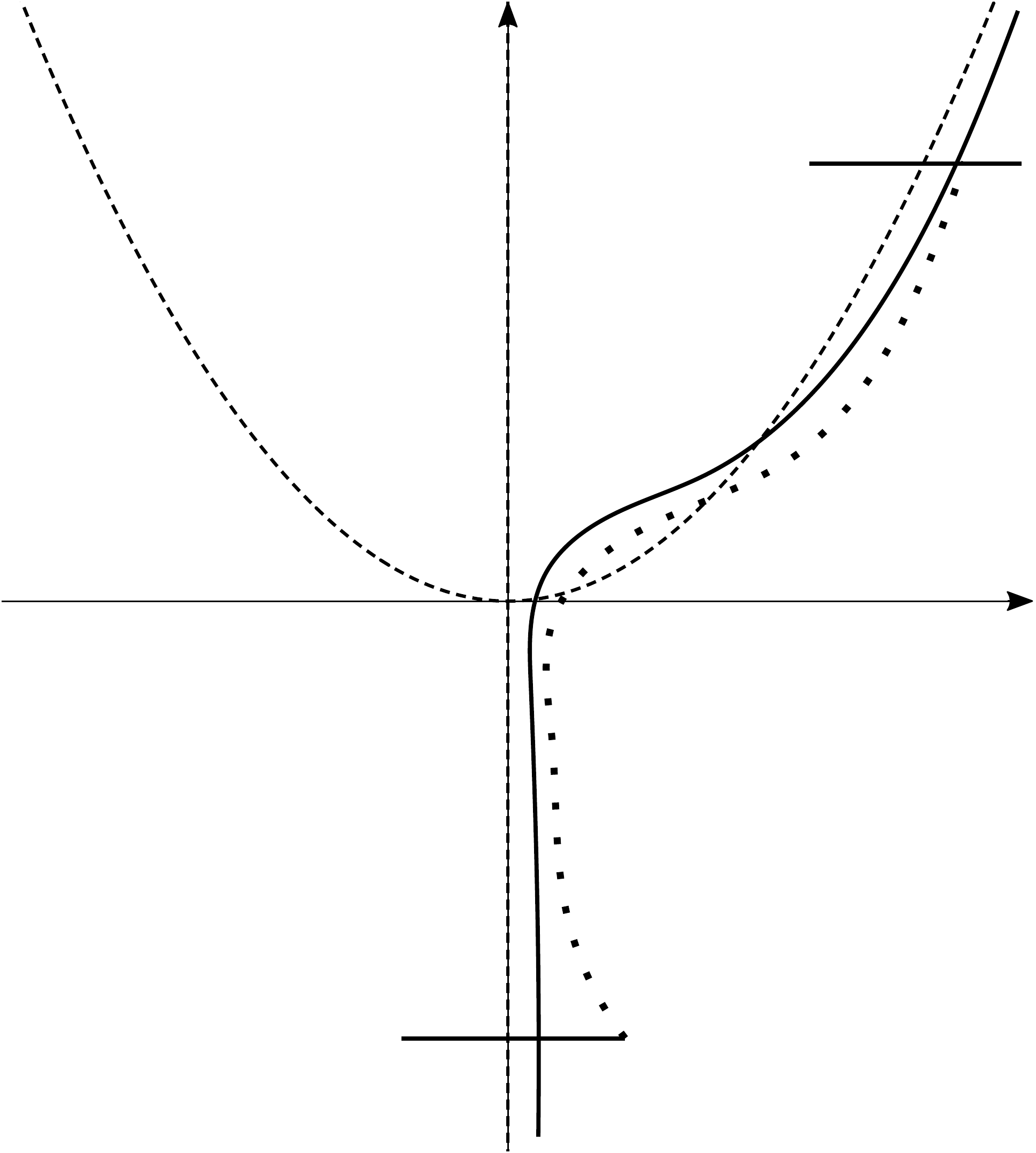}
			\footnotesize{
				\put(88,50){\(x\)}
				\put(40,98){\(y\)}
				\put(26,8){\(\Delta_{\textnormal{in}}\)}
				\put(59,84){\(\Delta_{\textnormal{out}}^+\)}
				\put(49,0){\(S_{a,\epsilon, h}^0\)}
				\put(90,97){\(S_{a,\epsilon, h}^+\)	}
			}		
		\end{overpic}
		
		\caption{Extension of the slow manifold \(S_{a,\epsilon,h}^0\) (bold line)  through the neighbourhood of the origin for \(\lambda >0\). Further the critical manifold \(S_0\) (dashed line) and a sample trajectory  from \(\Delta_{\textnormal{in}} \) to \(\Delta_{\textnormal{out}^+}\) (dotted line). }
		\label{fig:main}
	\end{center}
\end{figure}

\subsection{Transformation to the chart coordinates} 
\label{sec:charts}

The proof of Theorem~\ref{maintheorem} uses the blow-up method for the dynamical system induced by the map

\begin{equation}\label{discrete}
P: ~~\R^4 \to \R^4 ~~~~~~
\begin{pmatrix}
x \\ y\\ \epsilon \\h
\end{pmatrix} \mapsto 
\begin{pmatrix}
\bar{x} \\ \bar{y}\\ \bar{\epsilon} \\ \bar{h}
\end{pmatrix} = 
\begin{pmatrix}
x+h \left( x(y-x^2)+\lambda\epsilon \right)  \\
y+h\epsilon\\
\epsilon\\ h
\end{pmatrix},
\end{equation}
where the fast-slow separation parameter \(\epsilon\) and the stepsize \(h\) are also seen as variables.

The quasi-homogeneous blow-up transformation around the pitchfork singularity is given as
\begin{align*}
&\Phi : B:= \mathbb{S}^2 \times (0, \infty) \times (0, \infty) \to \R^4, ~~~~ (\tilde{x}, \tilde{y}, \tilde{\epsilon}, \tilde{h}, r) \mapsto (x,y,\epsilon, h), \\
&x= r \tilde{x}, \qquad y=r^2\tilde{y}, \qquad \epsilon = r^4\tilde{\epsilon}, \qquad h=r^{-2}\tilde{h}.
\end{align*} 
The transformation of the $(x, y,\epsilon)$-coordinates is the same as in the continuous-time case (see \cite{ks2001/2}). The change of variables in $h$ is chosen such that the map is desingularized in the relevant charts. We exclude $0$ from the domain of $\tilde h$ since at $\tilde h = 0$ every point is a neutral fixed point. Due to the transformation $h = \tilde h/ r$ we have to exclude $0$ from the domain of $ r$ as well. 

The transformation $\Phi$ induces a map \(\bar{P} := \Phi^{-1} \circ P \circ \Phi\) on the manifold $B$. We analyse the dynamics of $\bar{P}$  by using the 
charts $K_i$, $i=1,2,3$, 
\begin{align*}
	&K_1 : D_1 := 
	 \R \times \R^+ \times \R^+_0 \times \R^+	\to~  \R \times \R^- \times \R^+_0 \times \R^+ 
	  ~ & (x_1,r_1, \epsilon_1, h_1) \mapsto (x,y,\epsilon, h)~, \\
 	&K_2 : D_2 :=  
 	\R \times \R \times \R^+ \times \R^+	~~\to ~  \R \times \R \times \R^+ \times \R^+  
 	 ~ &(x_2,y_2, r_2, h_2) \mapsto (x,y,\epsilon, h)~, \\
	&K_3 : D_3 :=  \R \times \R^+ \times \R^+_0\times \R^+ \to~ \R \times \R^+ \times \R^+_0 \times \R^+   ~ & (x_3,r_3, \epsilon_3, h_3) \mapsto (x,y,\epsilon, h)~, 
\end{align*}
which are given by 
\begin{align*}
K_1: \quad x&=r_1x_1 &\quad y&= -r_1^2 &\quad \epsilon &= r_1^4\epsilon_1 &\quad h &= r_1^{-2}h_1~,\\
K_2: \quad x&=r_2x_2 &\quad y&=  r_2^2y_2 &\quad \epsilon &= r_2^4 &\quad h &= r_2^{-2}h_2~,\\
K_3: \quad x&=r_3x_3 &\quad y&= r_3^2 &\quad \epsilon &= r_3^4\epsilon_3 &\quad h &= r_3^{-2}h_3~.
\end{align*}

To switch between different chart coordinates we use the following coordinate changes
\begin{align*}
	\kappa_{12}: \quad x_2&=x_1\epsilon_1^{-\frac{1}{4}} &\quad y_2&= -\epsilon_1^{-\frac{1}{2}} &\quad r_2 &= r_1 \epsilon_1^{\frac{1}{4}} &\quad h_2 &= h_1\epsilon_1^{\frac{1}{2}}~, \\ 
\kappa_{21}: \quad x_1&=x_2(-y_2)^{-\frac{1}{2}} &\quad r_1&= r_2(-y_2)^{\frac{1}{2}} &\quad \epsilon_1 &= y_2^{-2} &\quad h_1 &= -h_2y_2~, 	
\end{align*}
and 
\begin{align*}
\kappa_{23}: \quad x_3&=x_2(y_2)^{-\frac{1}{2}} &\quad r_3&= r_2(y_2)^{\frac{1}{2}} &\quad \epsilon_3 &= y_2^{-2} &\quad h_3 &= h_2y_2~,\\
\kappa_{32}: \quad x_2&=x_3\epsilon_3^{-\frac{1}{4}} &\quad y_2&= \epsilon_3^{-\frac{1}{2}} &\quad r_2 &= r_3 \epsilon_3^{\frac{1}{4}} &\quad h_2 &= h_3\epsilon_3^{\frac{1}{2}}~. 
\end{align*}
For the proof of Theorem \ref{maintheorem} we will proceed as follows. 
Transforming \eqref{discrete} using the coordinate changes \(K_i ~(i=1,2,3)\)~induces a dynamical system on \(D_i \subset \R^4\) in chart coordinates. Trajectories of \eqref{discrete} are analyzed via their corresponding transformed versions in each of the charts. In every chart \(K_i\) we will define sets \(\Sigma_i^{\txtin}\) and \(\Sigma_i^{\txtout}\), show that the transition maps \(\Pi_i : \Sigma_i^{\txtin} \to \Sigma_i^{\txtout}\) are well defined and study their contractivity. The mappings \(\Pi^\pm\) are then built by connecting the three chart-wise transition maps, which are elaborated in Section~\ref{sec:blowdown} in more detail. 
 
We refer to \(K_1\) as the \emph{entering chart}, as we start our analysis in this chart and trajectories are brought closer to the origin. 
Charts of the type of \(K_2\) are often called \emph{rescaling charts}, since the transformation is basically a rescaling with suitable powers of the fast-slow-separation constant \(\epsilon\). In this chart, the dynamics arbitrarily close to the origin are analyzed. Finally the \emph{exiting chart} \(K_3\) is used to describe, how trajectories exit the vicinity of the origin and is crucial for the contractivity statement of Theorem~\ref{maintheorem}.

\subsection{Dynamics in the entering chart}
\label{sec:entering}

Fix some \(\rho > 0\) and also consider the case
\begin{equation*}
\lambda >0.
\end{equation*}
from now on until Section~\ref{canards}. The case \(\lambda <0 \) can be treated analogously, see also Section \ref{sec:blowdown} for more details. Further take \(\epsilon , h > 0 \) sufficiently small. During the next sections we will specify what sufficiently small means for \(\epsilon \) and \( h\) such that \(\epsilon_0 \) and \( h_0 \) are determined. In the coordinates $(x_1, r_1, \epsilon_1, h_1)$ of the first chart $K_1$, the set \(\Delta_{\textnormal{in}}\) is given as
$$ \Sigma_1^{\textnormal{in}} = \left\{ \left(\frac{x}{\rho}, \rho, \delta, \nu \right) \,:\, x \in J\right\}\,, $$
for which we define \[ \delta := \frac{\epsilon}{\rho^4}  ~~\text{ and }~~ \nu :=  h\rho^2 ~.\]
We investigate the dynamics within the domain 
$$D_1 := \left\{(x_1, r_1, \epsilon_1, h_1) \in \mathbb{R}^4 : r_1 \in 
[ 0, \rho], \epsilon_1 \in [0, 16 \delta], h_1 \in [ 0, \nu] \right\}\,.$$

In order to find an expression for the map~\eqref{discrete} in terms of the entering chart \(K_1\), we first rewrite \( \bar{y} =  y + \epsilon h  \) in \(K_1\)-coordinates as
\[ -\bar{r}_1^2 = -r_1^2 + r_1^{-2} h_1 r_1^4 \epsilon_1 ~.\] 
This yields 
\begin{equation}\label{r1relation}
\bar{r}_1^2 = r_1^2 (1 - h_1 \epsilon_1)\,.
\end{equation}

The remaining three equations of~\eqref{discrete} in \(K_1\)-coordinates read as

\begin{align*}
\bar{r}_1\bar{x}_1 &=r_1 x_1 + r_1^{-2} h_1  \left(r_1 x_1 (-r_1^2-r_1^2 x_1^2)+ \lambda r_1^4\epsilon_1 \right) \,,  \\
\bar{r}_1^4\bar{\epsilon}_1 &= r_1^4 \epsilon_1 \,,\\
\bar{r}_1^{-2}\bar{h}_1 &=r_1^{-2} h_1 \,.
\end{align*}
Hence, by using \eqref{r1relation}, we obtain the maps
\begin{align} \label{K1_dynamics}
\begin{array}{rcrcl}
&\bar{x}_1& &=&(1-h_1\epsilon_1)^{-\frac{1}{2}} \left[ x_1+ h_1\left(x_1(-1-x_1^2)+\lambda r_1\epsilon_1 \right) \right] ~,\\
&\bar{r}_1& &=& (1-h_1\epsilon_1)^{\frac{1}{2}}~ r_1~,\\
&\bar{\epsilon}_1& &=& (1-h_1\epsilon_1)^{-2} \epsilon_1~,\\
&\bar{h}_1& &=&(1-h_1\epsilon_1)~ h_1~.
\end{array}
\end{align}

The dynamics in \(r_1, \epsilon_1\) and \(h_1\) can be calculated explicitly for the first chart. 
\begin{lemma}\label{r1h1eps1}
For $\xi_0 := \frac{1}{h_1(0)\epsilon_1(0)} > 0$, the trajectories of~\eqref{K1_dynamics} in $r_1, \epsilon_1, h_1$ are given by
\begin{equation*} 
	r_1(n) = r_1(0) \sqrt{ \frac{\xi_0-n}{\xi_0}} ~, \ 
	\epsilon_1(n) = \epsilon_1(0) \frac{\xi_0^2} {(\xi_0-n)^2} ~, \ h_1(n) = h_1(0) \frac{\xi_0-n}{\xi_0}\,,
\end{equation*}
for $n\in \mathbb{N} , n < \xi_0 .$
\end{lemma}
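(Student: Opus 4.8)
The plan is to solve the three scalar recursions in \eqref{K1_dynamics} explicitly, exploiting the fact that the quantity $h_1\epsilon_1$ appearing in every update factor is itself governed by a simple recursion. Writing $u(n) := h_1(n)\epsilon_1(n)$, the last two lines of \eqref{K1_dynamics} give $\bar h_1 = (1-u)h_1$ and $\bar\epsilon_1 = (1-u)^{-2}\epsilon_1$, hence $u(n+1) = (1-u(n))^{-1}u(n)$. I would rewrite this in terms of the reciprocal $w(n) := 1/u(n)$, for which the recursion linearizes: $w(n+1) = 1/u(n+1) = (1-u(n))/u(n) = w(n) - 1$. Therefore $w(n) = w(0) - n = \xi_0 - n$ with $\xi_0 = 1/(h_1(0)\epsilon_1(0)) = w(0)$, valid as long as $w(n) > 0$, i.e. $n < \xi_0$. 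Equivalently $u(n) = h_1(n)\epsilon_1(n) = \tfrac{1}{\xi_0 - n}$ and $1 - u(n) = \tfrac{\xi_0 - n - 1}{\xi_0 - n}$.

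Next I would feed this back into the individual recursions. For $h_1$: $h_1(n+1) = (1-u(n))h_1(n) = \tfrac{\xi_0-n-1}{\xi_0-n}\,h_1(n)$, which telescopes to $h_1(n) = h_1(0)\prod_{k=0}^{n-1}\tfrac{\xi_0-k-1}{\xi_0-k} = h_1(0)\,\tfrac{\xi_0-n}{\xi_0}$. The same telescoping product with the reciprocal squared gives $\epsilon_1(n) = \epsilon_1(0)\prod_{k=0}^{n-1}\bigl(\tfrac{\xi_0-k}{\xi_0-k-1}\bigr)^2 = \epsilon_1(0)\,\bigl(\tfrac{\xi_0}{\xi_0-n}\bigr)^2$; a consistency check is that the product $h_1(n)\epsilon_1(n)$ indeed equals $\tfrac{1}{\xi_0-n}$ as found above. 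For $r_1$: $\bar r_1 = (1-u(n))^{1/2} r_1$, so $r_1(n)^2 = r_1(0)^2\prod_{k=0}^{n-1}(1-u(k)) = r_1(0)^2\prod_{k=0}^{n-1}\tfrac{\xi_0-k-1}{\xi_0-k} = r_1(0)^2\,\tfrac{\xi_0-n}{\xi_0}$, and taking the positive square root (legitimate since $r_1 \ge 0$ on $D_1$ and the factors are positive for $n<\xi_0$) yields the claimed formula. I would then note these formulas are verified by a one-line induction: assuming the stated expressions at step $n$, plug into \eqref{K1_dynamics} using $1-h_1(n)\epsilon_1(n) = \tfrac{\xi_0-n-1}{\xi_0-n}$ and simplify to recover the expressions at step $n+1$.

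Strictly speaking there is little hard analysis here; the only subtlety worth flagging is bookkeeping of the domain of validity. The factor $1 - h_1(n)\epsilon_1(n) = \tfrac{\xi_0-n-1}{\xi_0-n}$ is positive precisely for $n < \xi_0 - 1$, and the formulas continue to make sense (with $r_1, h_1$ hitting $0$ at $n = \xi_0$ when $\xi_0 \in \mathbb{N}$) for $n < \xi_0$; this matches the constraint $n \in \mathbb{N},\ n < \xi_0$ in the statement, and I would simply remark that all arising denominators $\xi_0 - k$ are nonzero for $k \le n - 1 < \xi_0$. I would present the proof in the order: (i) derive and linearize the recursion for $h_1\epsilon_1$ via the reciprocal substitution; (ii) telescope to get $h_1(n)$, $\epsilon_1(n)$, $r_1(n)$; (iii) close with the induction verification. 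The main "obstacle", such as it is, is purely organizational — choosing the reciprocal substitution for $h_1\epsilon_1$ up front, which turns every subsequent computation into a transparent telescoping product rather than an opaque one.
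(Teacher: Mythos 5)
Your proposal is correct and follows essentially the same route as the paper: both isolate the product $h_1\epsilon_1$, observe that it satisfies $\bar\eta=(1-\eta)^{-1}\eta$ with solution $(\xi_0-n)^{-1}$ (your reciprocal substitution just makes the paper's stated solution explicit), and then recover $h_1$, $\epsilon_1$, $r_1$ by telescoping the products of the factors $1-h_1\epsilon_1$. Your extra remarks on the sign of the factors and the domain $n<\xi_0$ are fine but add nothing beyond the paper's argument.
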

\begin{proof}
We define \(\eta := h_1 \epsilon_1\). Multiplying the last two equations of \eqref{K1_dynamics} gives the relation 
\[\bar{\eta} = \bar{h}_1 \bar{\epsilon}_1  = (1-h_1\epsilon_1)^{-1} h_1 \epsilon_1 = (1-\eta)^{-1}\eta .\]
Given the initial condition \(\eta(0) = \xi_0^{-1} > 0 \),  we obtain the solution \(\eta(n)= (\xi_0-n)^{-1}\) .
Hence, we can compute
\[h_1(n) = h_1(0) \prod_{k=0}^{n-1} (1-\eta(k)) =h_1(0) \prod_{k=0}^{n-1} \frac{\eta(k)}{\eta(k+1)} = h_1(0) \frac{\eta(0)}{\eta(n)} =h_1(0) \frac{\xi_0-n}{\xi_0}\,.\]
Similarly, we obtain the formulas for $r_1$ and $\epsilon_1$.
\end{proof}
Furthermore, we can observe from equations~\eqref{K1_dynamics} that the set \(\{\epsilon_1=0\}\) is invariant for the dynamics and, for given $r_1^*, h_1^*$, consists of the two-parameter family of invariant one-dimensional lines \[\{\epsilon_1=0, r_1=r_1^*,  h_1=h_1^*\}.\]
Each of these lines has a fixed point located at \((x_1, r_1, \epsilon_1, h_1)= (0, r_1^*, 0, h_1^*)\),
which has a three-dimensional center eigenspace and a one-dimensional stable eigenspace in \(x_1\)-direction with eigenvalue \(1-h_1^*\) (recall that $h_1^* \leq h \rho^2 < \frac{1}{3} <2$). In other words, 
the two-dimensional plane
$$ S_{\txta,1}^{0} = \{(x_1, r_1, \epsilon_1, h_1) \in D_1 \,:\, x_1 = 0,\epsilon_1 = 0 \}$$
is an invariant manifold in $D_1$ only consisting of fixed points, attracting in the 
$x_1$-direction and neutral in the other directions, and corresponding to the branch \(S_{\txta}^0\) of the critical manifold. 
In particular, for each $h_1 \geq 0$ 
we have the fixed point
$$ p_a^0(h_1) = (0,0,0, h_1)\,.$$ 
We obtain the following statement:

\begin{proposition} \label{centermanifold_K1}
The invariant manifold $S_{\txta,1}^{0}$ extends to a center-stable invariant manifold \(M_{\txta,1}^0\) (at $p_a^0(0)$) which is given in $D_1$ by a graph $x_1 = l_1(r_1, \epsilon_1, h_1)$ for some mapping $l_1$. Furthermore, $\epsilon_1$ is increasing in $D_1$ (and thereby, in particular, on $M_{\txta,1}^{0}$), 
whereas $h_1, r_1$ are decreasing in $D_1$ (and thereby also on $M_{\txta,1}^{0}$).
\end{proposition}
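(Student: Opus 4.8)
The plan is to establish Proposition~\ref{centermanifold_K1} by a standard invariant/center-manifold argument adapted to the discrete map~\eqref{K1_dynamics}, combined with the explicit solution formulas from Lemma~\ref{r1h1eps1}. The monotonicity statements are immediate and should be dispatched first: from the third equation in~\eqref{K1_dynamics}, $\bar\epsilon_1 = (1-h_1\epsilon_1)^{-2}\epsilon_1 \geq \epsilon_1$ since $0 \leq h_1\epsilon_1 < 1$ on $D_1$ (this uses $h_1 \leq \nu = h\rho^2$ small and $\epsilon_1 \leq 16\delta$ small, so that $h_1\epsilon_1$ is bounded well below $1$); similarly $\bar r_1 = (1-h_1\epsilon_1)^{1/2} r_1 \leq r_1$ and $\bar h_1 = (1-h_1\epsilon_1)h_1 \leq h_1$. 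One should also check that these inequalities keep trajectories inside $D_1$, i.e.\ that $r_1, \epsilon_1, h_1$ stay in their prescribed intervals $[0,\rho], [0,16\delta], [0,\nu]$ — decrease of $r_1, h_1$ handles two of them, and for $\epsilon_1$ one uses the explicit formula $\epsilon_1(n) = \epsilon_1(0)\,\xi_0^2/(\xi_0-n)^2$ from Lemma~\ref{r1h1eps1} together with the fact that the transition time $n \approx 2\rho^2/(\epsilon h)$ is strictly less than $\xi_0 = 1/(h_1(0)\epsilon_1(0))$ by the choice of constants, so the growth factor stays bounded (indeed bounded by $16$, matching the definition of $D_1$).

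**Next I would** construct $M_{\txta,1}^0$ as a center-stable manifold of the fixed point $p_a^0(0) = (0,0,0,0)$. The linearization of~\eqref{K1_dynamics} at $p_a^0(0)$ has eigenvalue $1-h_1^* = 1$ in the $x_1$-direction and $1$ along each of $r_1,\epsilon_1,h_1$, which at the origin itself is fully degenerate; the cleaner route is to treat the whole family $p_a^0(h_1)=(0,0,0,h_1)$ at once. For $h_1$ bounded away from $0$ the $x_1$-eigenvalue $1-h_1$ lies strictly inside the unit circle, so the plane $S_{\txta,1}^0 = \{x_1=0,\epsilon_1=0\}$, which is already invariant and consists of fixed points, is normally attracting in the $x_1$-direction with the $(r_1,\epsilon_1,h_1)$-directions neutral. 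Applying the invariant-manifold theory of~\cite{HPS77} (as already invoked in the paper for the normally hyperbolic branches) to the map $P$ restricted to a neighborhood in $D_1$ yields a center-stable manifold $M_{\txta,1}^0$ containing $S_{\txta,1}^0$; since $S_{\txta,1}^0$ is a graph $x_1 = 0$ over $(r_1,\epsilon_1,h_1)$ and $M_{\txta,1}^0$ is a small $C^k$-perturbation of it near the critical set, the implicit function theorem gives $M_{\txta,1}^0$ as a graph $x_1 = l_1(r_1,\epsilon_1,h_1)$ with $l_1(r_1,0,h_1)=0$ (because every point of $S_{\txta,1}^0$ is a fixed point lying in $M_{\txta,1}^0$) and $l_1 = \bigo{\epsilon_1}$.

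**Alternatively, and perhaps more in the spirit of the paper's ``direct estimates'' approach**, one can build $l_1$ by hand via a graph-transform/fixed-point argument: seek $l_1$ invariant under~\eqref{K1_dynamics}, i.e.\ $\bar x_1 = l_1(\bar r_1,\bar\epsilon_1,\bar h_1)$ whenever $x_1 = l_1(r_1,\epsilon_1,h_1)$, and solve this functional equation by iterating the graph transform on the complete metric space of Lipschitz functions $l_1$ with $l_1|_{\{\epsilon_1=0\}}=0$ and small Lipschitz constant; the contraction factor is governed by the $x_1$-multiplier $(1-h_1\epsilon_1)^{-1/2}(1-h_1(1+3x_1^2))$, which near $x_1=0$ is $1-h_1+\bigo{h_1\epsilon_1}$, safely inside the unit disk for $h_1>0$ small. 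This makes the construction self-contained and ties the size of $l_1$ directly to the $\lambda r_1\epsilon_1 h_1$ forcing term in the $\bar x_1$-equation.

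**The main obstacle** is the genuine degeneracy at $r_1 = h_1 = 0$: there the $x_1$-contraction rate $1-h_1$ degenerates to $1$, so the ``stable'' direction is only neutral on the boundary piece $\{h_1 = 0\}$ and one does not get a uniform spectral gap over all of $D_1$. The resolution is that we exclude $\tilde h = 0$ (equivalently $h_1 = 0$ in the interior of the relevant dynamics) — this is exactly why the paper removes $0$ from the domain of $\tilde h$ — so for trajectories of interest $h_1(0) > 0$ stays bounded below (it only decreases by the bounded factor $\prod(1-\eta(k))$, cf.\ Lemma~\ref{r1h1eps1}), giving a uniform contraction on the trajectories we actually track. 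The remaining care is bookkeeping: checking that the neighborhood on which $M_{\txta,1}^0$ exists contains the image of $\Sigma_1^{\txtin}$ under the iteration, and that $l_1$ and its derivatives are controlled uniformly in the small parameters $\epsilon, h$ — routine once the spectral setup is fixed, and needed only qualitatively here since the quantitative contraction estimate of Theorem~\ref{maintheorem} is extracted later (Sections~\ref{sec:entering}--\ref{sec:blowdown}).
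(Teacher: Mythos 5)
Your proposal is correct and takes essentially the same route as the paper: existence of \(M_{\txta,1}^0\) via classical center/invariant-manifold theory (HPS) built on the fixed-point family \((0,r_1^*,0,h_1^*)\) with \(x_1\)-eigenvalue \(1-h_1^*\), and the monotonicity of \(r_1,\epsilon_1,h_1\) read off from the explicit \((r_1,\epsilon_1,h_1)\)-dynamics (the paper quotes the formulas of Lemma~\ref{r1h1eps1}, you use the equivalent one-step inequalities; your graph-transform sketch is an optional alternative). One small bookkeeping slip: since \(\xi_0=1/(h_1(0)\epsilon_1(0))=\rho^2/(\epsilon h)\), the full passage time \(2\rho^2/(\epsilon h)\) equals \(2\xi_0\) and is \emph{not} less than \(\xi_0\); what matters is only the time spent in \(D_1\), namely \(n^*=\tfrac{3}{4}\xi_0<\xi_0\), at which point \(\epsilon_1\) has grown by exactly the factor \(16\) and the trajectory is handed over to chart \(K_2\).
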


\begin{proof}
This is an immediate consequence of the considerations above and classical center manifold theory (cf.~e.g.~\cite[Chapter 5A]{HPS77}). From Lemma~\ref{r1h1eps1}, we can see immediately that as long as $r_1(n), h_1(n) > 0$, we have $ n < \xi_0$.  Hence, the claim follows from the formulas in Lemma~\ref{r1h1eps1}.
\end{proof} 

Note that, on $\{r_1 > 0, \epsilon_1 > 0, h_1 > 0 \}$, the manifold \(M_{a,1}^0\) corresponds to the union of the slow manifolds \(S_{a, \epsilon, h}^0\). 
Assume we iterate system~\eqref{K1_dynamics} until \(r_1(n)\) reaches a value less or equal to \(\frac{\rho}{2}\). (This means that the trajectory of~\eqref{discrete} has reached a point above the line \(y=-\rho^2/4\) in \(x,y\)-coordinates.)
As already remarked before, when defining \(\Delta_{\textnormal{out}}^\pm\), the level \(\frac{\rho}{2}\) might not be hit precisely. However, let us assume for simplicity that we are in the situation of reaching \(r_1 = \frac{\rho}{2}\) after $n^* \in \mathbb{N}$ iterates. (Also in the next sections we will assume in a similar way that specific values are hit since the small errors do clearly not change our results.) This means we have \(\frac{\xi_0-n^*}{\xi_0} =  \frac{1}{4}\), and reversely, \(n^* = \frac{3}{4} \xi_0 = \frac{3}{4\nu \delta}\). 

Denote by \(\Pi_1: \Sigma_1^{\txtin} \to \R^4\) the transition map after \(\frac{3}{4\nu \delta}\) iterations. 
We can deduce the following Lemma:
\begin{lemma} \label{lem:transition1}
For $\delta > 0$ sufficiently small, we have
$$\Pi_1(\Sigma_1^{\textnormal{in}}) \subset \Sigma_1^{\textnormal{out}} := \left\{x_1 \in [-1,1],  r_1= \frac{\rho}{2}, \epsilon_1 = 16 \delta, h_1 = \frac{\nu}{4} \right\}\,, $$
and \(M_{\txta, 1}^0 \cap \Sigma_1^{\textnormal{in}}\) as well as \(M_{\txta, 1}^0 \cap \Sigma_1^{\textnormal{out}}\) are non-empty sets.
\end{lemma}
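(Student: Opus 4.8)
The plan is to analyze the explicit dynamics from Lemma~\ref{r1h1eps1} together with the scalar recursion for $x_1$ in~\eqref{K1_dynamics}, treating $r_1, \epsilon_1, h_1$ as known driving terms. Since the initial condition on $\Sigma_1^{\txtin}$ has $r_1(0)=\rho$, $\epsilon_1(0)=\delta$, $h_1(0)=\nu$, we have $\xi_0 = 1/(\nu\delta)$, and after $n^* = \tfrac{3}{4}\xi_0$ steps Lemma~\ref{r1h1eps1} gives exactly $r_1(n^*)=\rho/2$, $h_1(n^*)=\nu/4$, and $\epsilon_1(n^*)=\delta\cdot\xi_0^2/(\xi_0-n^*)^2 = \delta\cdot 16 = 16\delta$. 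This immediately pins down the $r_1, \epsilon_1, h_1$ components of $\Pi_1(\Sigma_1^{\txtin})$ to the values appearing in $\Sigma_1^{\txtout}$, so the only real content is to show the fast coordinate stays in $[-1,1]$ for all $0 \le n \le n^*$.

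For the $x_1$-bound I would examine the map $\bar{x}_1 = (1-h_1\epsilon_1)^{-1/2}\bigl[x_1 + h_1\bigl(x_1(-1-x_1^2)+\lambda r_1\epsilon_1\bigr)\bigr]$. The idea is that the linear part $x_1 \mapsto x_1(1-h_1) = x_1 - h_1 x_1$ is a contraction toward $0$ (using $h_1 \le \nu = h\rho^2 < 1/3 < 1$), the cubic term $-h_1 x_1^3$ pushes further toward $0$ whenever $|x_1|$ is not too large, and the inhomogeneity $h_1\lambda r_1\epsilon_1$ plus the factor $(1-h_1\epsilon_1)^{-1/2}$ are small perturbations since $r_1\le\rho$, $\epsilon_1\le 16\delta$ and $h_1\epsilon_1$ is tiny for $\delta$ small. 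Concretely I would show that the interval $[-1,1]$ (or a slightly larger interval $[-1-C\delta, 1+C\delta]$ that still collapses into $[-1,1]$ after one step) is forward-invariant under~\eqref{K1_dynamics} for $\delta$ sufficiently small: if $|x_1|\le 1$ then $|x_1+h_1(x_1(-1-x_1^2))| = |x_1||1-h_1(1+x_1^2)| \le |x_1|$ because $0 < h_1(1+x_1^2) \le 2h_1 < 1$, and then the correction terms of size $\bigo{h_1\delta}$ and the prefactor $(1-h_1\epsilon_1)^{-1/2} = 1+\bigo{h_1\delta}$ keep the result within $[-1,1]$. Since $J$ is chosen small, $\Sigma_1^{\txtin}$ lies in (say) $|x_1|\le 1$, so the whole transition stays in the strip and lands in $\Sigma_1^{\txtout}$.

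For the last assertion, $M_{\txta,1}^0 \cap \Sigma_1^{\txtin} \ne \emptyset$ follows because $M_{\txta,1}^0$ is the graph $x_1 = l_1(r_1,\epsilon_1,h_1)$ over $D_1$ (Proposition~\ref{centermanifold_K1}), and $(r_1,\epsilon_1,h_1)=(\rho,\delta,\nu)$ lies in $D_1$, with the corresponding $x_1$-value in $J$ provided $J$ is taken large enough around $0$ relative to the size of $l_1$ near the fixed point $p_a^0(0)$ — here one uses that $l_1$ is continuous and $l_1 \to 0$ as the arguments tend to $0$, plus the standing smallness assumptions. For $M_{\txta,1}^0 \cap \Sigma_1^{\txtout}$ one simply invokes invariance of $M_{\txta,1}^0$: the point of $\Sigma_1^{\txtin}$ on $M_{\txta,1}^0$ is mapped by $\Pi_1$ (i.e.\ $n^*$ iterates of~\eqref{K1_dynamics}) to a point still on $M_{\txta,1}^0$ whose $(r_1,\epsilon_1,h_1)$ components are exactly $(\rho/2, 16\delta, \nu/4)$ by the computation above, and whose $x_1$-component lies in $[-1,1]$ by the forward-invariance just established; hence it lies in $\Sigma_1^{\txtout}$.

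The main obstacle I anticipate is the bookkeeping for the $x_1$-bound: because the prefactor $(1-h_1\epsilon_1)^{-1/2}$ is slightly larger than $1$ and the forcing term $h_1\lambda r_1\epsilon_1$ is nonzero, the interval $[-1,1]$ is not quite invariant on the nose, so one has to either enlarge the working interval slightly and argue it contracts back, or sum the small $\bigo{h_1\delta}$ contributions over the $\sim 1/(\nu\delta)$ iterates and check they remain $o(1)$ — this is where one must be careful that $h_1$ and $\epsilon_1$ are themselves changing (with $\epsilon_1$ growing up to $16\delta$ and $h_1$ decreasing), but since $h_1(n)\epsilon_1(n) = \eta(n) = (\xi_0-n)^{-1} \le 4/\xi_0 = 4\nu\delta$ throughout, the per-step error is uniformly $\bigo{\nu\delta}$ and the total accumulated error over $n^* = \tfrac{3}{4\nu\delta}$ steps is $\bigo{1}$ but in fact controlled by contraction, so choosing $\delta$ (and hence $\epsilon_0$) small enough closes the estimate. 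Everything else is a direct substitution into Lemma~\ref{r1h1eps1}.
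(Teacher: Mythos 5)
Your overall structure matches the paper's proof: you use the explicit formulas of Lemma~\ref{r1h1eps1} to pin down $(r_1,\epsilon_1,h_1)=(\rho/2,16\delta,\nu/4)$ after $n^*=\tfrac{3}{4\nu\delta}$ steps, you control $x_1$ via the cubic map plus $\bigo{\nu\delta}$-perturbations, and you obtain the two intersection statements from the graph representation, continuity, and invariance of $M_{\txta,1}^0$, exactly as the paper does. The one substantive deviation is your treatment of the $x_1$-coordinate, and it contains a gap relative to the paper's setting: you assume ``since $J$ is chosen small, $\Sigma_1^{\txtin}$ lies in $|x_1|\le 1$'', i.e.\ $\rho^{-1}J\subset[-1,1]$, and then only prove (approximate) forward invariance of $[-1,1]$. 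The paper never assumes $|J|\le 2\rho$; instead it works on the larger interval $\bigl[-\sqrt{2/h_1-2},\,\sqrt{2/h_1-2}\,\bigr]$, on which the cubic map stays inside the cone $\pm(1-h_1)x_1$, imposes only the condition $\nu\le\bigl(1+\tfrac{|J|^2}{8\rho^2}\bigr)^{-1}$ so that $\rho^{-1}J$ fits into this interval, and then uses the accumulated contraction at rate at least $1-h_1\le 1-\nu/4$ over the $\sim\tfrac{3}{4\nu\delta}$ iterates (for $\delta$ small) to bring the whole image inside $[-1,1]$, in fact exponentially close to the slow manifold. This is precisely where the explicit choice $h_0=\min\{\tfrac{2}{15\rho^2},\tfrac{1}{\rho^2+|J|^2/8}\}$ in Section~\ref{sec:blowdown} comes from, so your shortcut would either restrict the admissible $J$ or force a different bookkeeping of $h_0$; to close the gap you should replace pure invariance of $[-1,1]$ by the cone/contraction argument on the larger interval (your per-step margin $h_1\ge\nu/4$ versus perturbation $\bigo{\nu\delta}$ computation then carries over essentially unchanged).

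Two smaller points. First, your worry about errors accumulating to $\bigo{1}$ over $n^*$ steps is indeed resolved by the per-step contraction margin, as you suspected, so that part is fine. Second, for $M_{\txta,1}^0\cap\Sigma_1^{\txtin}\neq\emptyset$ the correct statement is not ``$l_1\to 0$ as the arguments tend to $0$'' with $J$ ``taken large enough'': the relevant base point is $(r_1,\epsilon_1,h_1)=(\rho,\delta,\nu)$, which does not tend to the origin, and $J$ is fixed in advance. What one uses is that $M_{\txta,1}^0$ contains the plane of fixed points $\{x_1=0,\epsilon_1=0\}$, hence $l_1(r_1,0,h_1)=0$, so by continuity $l_1(\rho,\delta,\nu)\to 0$ as $\delta\to 0$ and the intersection point lies in $\Sigma_1^{\txtin}$ for every fixed $J$ once $\delta$ is small; this is how the paper phrases it, and your quantifiers should be adjusted accordingly. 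The final step, invariance of $M_{\txta,1}^0$ giving $M_{\txta,1}^0\cap\Sigma_1^{\txtout}\neq\emptyset$, agrees with the paper.
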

\begin{proof}
From the explicit solutions in Lemma~\ref{r1h1eps1}, we obtain  \[\Pi_1(\Sigma_1^{\txtin}) \subset \left\{r_1= \frac{\rho}{2}, \epsilon_1 = 16 \delta, h_1 = \frac{\nu}{4} \right\} .\]

For \(\epsilon = 0\) the \(x_1\)-equation of \eqref{K1_dynamics} reads \begin{equation}\label{eq:x1_for_eps=0}
	\bar{x}_1 = x_1+ h_1\left(x_1(-1-x_1^2)\right) = (1-h)x_1 - hx_1^3 ~.
\end{equation} 
Consequently we have \begin{equation*}
	\bar{x}_1 < (1-h_1)x_1 ~~\text{ for } x_1 > 0  ~~\text{ and }~~ \bar{x}_1 > (1-h_1)x_1 ~~\text{ for } x_1 < 0 ~.
\end{equation*}
Additionally a direct computation yields that for \(|x_1| \leq \sqrt{\frac{2}{h}-2} \) we have \[ |\bar{x}_1| \leq |(1-h_1)x_1| \]
This means that \(\bar{x}_1 \) lies in the cone bordered by \(\pm(1-h_1)x_1\) for all \(x_1\) that satisfy \(|x_1| \leq \sqrt{\frac{2}{h_1}-2} \) (see also Figure \ref{fig:convergence:cone}). Thus contraction towards the fixed point at 0 is guaranteed for all initial values of \(x_1\) in that domain and the speed is at least the linear rate \(1-h_1\).

\begin{figure}[ht] 
	
	\begin{center}
		
		\begin{overpic}[width=0.6\textwidth]{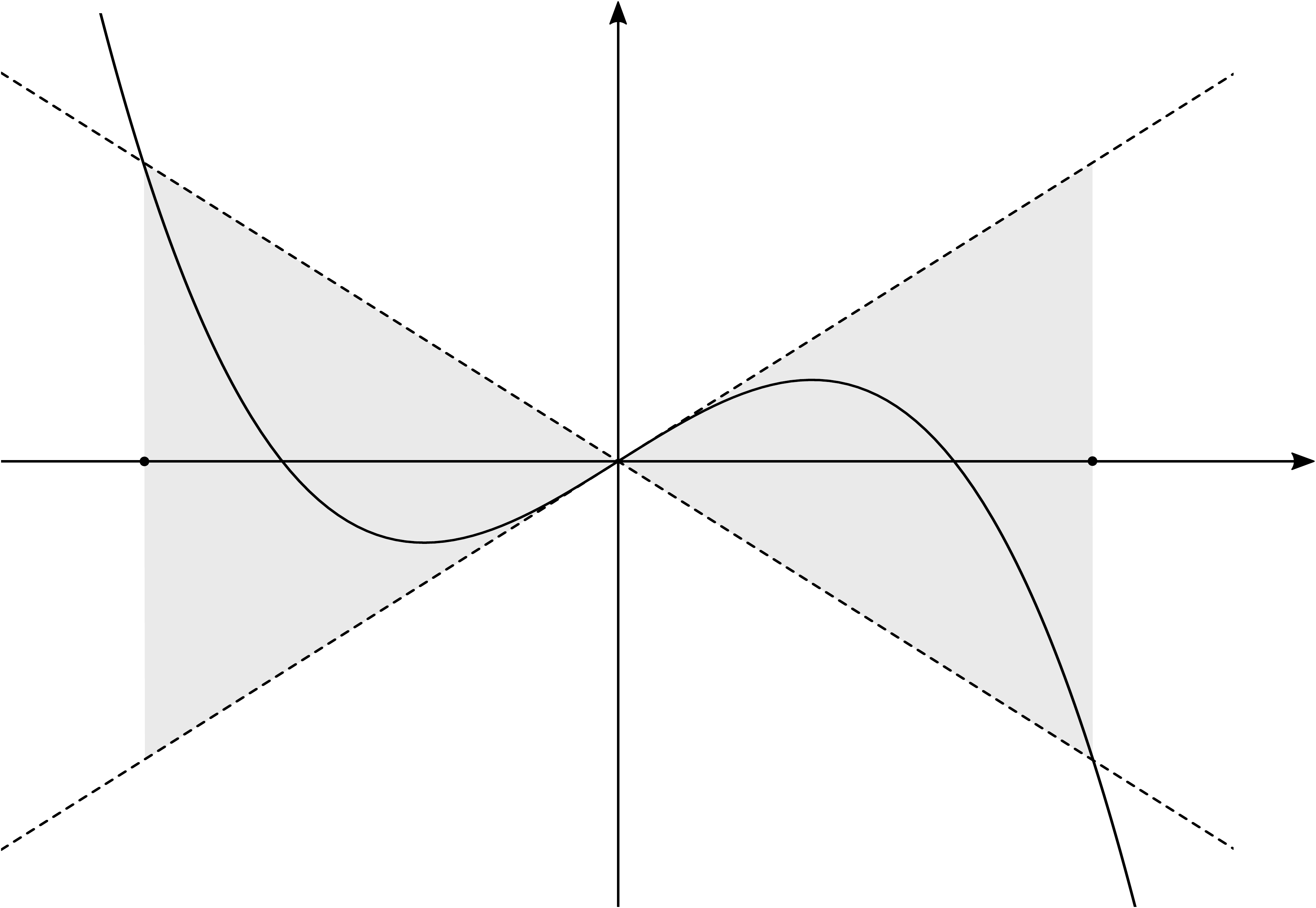}
			\scriptsize
			\put(99,36){\(x_1\)}
			\put(10,65){\(\bar{x}_1\)}
			\put(94,61){\((1-h_1)x_1\)}
			\put(94,5){\(-(1-h_1)x_1\)}
			\tiny \put(-4,30){\(-\sqrt{\tfrac{2}{h_1}-2}\)}
			\tiny \put(83,30){\(\sqrt{\frac{2}{h_1}-2}\)}
		\end{overpic}
	\caption{	For \(x_1 \in \left[-\sqrt{\frac{2}{h_1}-2}, \sqrt{\frac{2}{h_1}-2}~ \right] \) the image \(\bar{x}_1 = x_1+ h_1\left(x_1(-1-x_1^2)\right)\) lies inside the cone spanned by \(\pm (1-h_1)x_1\). } \label{fig:convergence:cone}
	\end{center}
\end{figure}

Note that \(h_1(n)\) stays inside the interval \(\left[\frac{\nu}{4}, \nu \right]\) for $n \leq n^*$ and the stable eigenvalue in \(x_1\)-direction is given by \(1-h_1\) along the manifold $S_{\txta,1}^{0}$.  We may choose \(\nu\) small enough so that \( \rho^{-1}J \subset  \left[-\sqrt{\frac{2}{\nu}-2}, \sqrt{\frac{2}{\nu}-2}~\right] \).
Hence it follows from standard perturbation arguments that, for \(\delta\) sufficiently small, the map \(\Pi_1\) is a contraction with rate \( (1-c) ^ {\frac{3}{4\nu \delta}}\) for some constant \(c < \frac{\nu}{4}\).  Therefore we have \(\Pi_1(\Sigma_1^{\textnormal{in}}) \subset \Sigma_1^{\textnormal{out}}\) for a sufficiently small choice of \(\delta\). 

Since for \(\delta \to 0 \) the point \(M_{\txta, 1}^0  \cap \{\epsilon_1=\delta\}\) approaches \((0,\rho, 0, \nu)\), we can deduce that \(M_{\txta, 1}^0 \cap \{\epsilon_1=\delta\}\) lies in \(\Sigma_1^{\txtin}\) for \(\delta\) small enough. 
Additionally, since the manifold \(M_{\txta, 1}^0 \) is invariant under the forward iterations of~\eqref{K1_dynamics}, we have that \(M_{\txta, 1}^0  \cap \Sigma_1^{\txtout}\) is non-empty as well. 
\end{proof}

\subsection{Dynamics in the rescaling chart}
\label{sec:rescaling}

We use \(\kappa_{12}\) to transfer the set \(\Sigma_1^{\txtout}\) to the second chart and define 

\[ \Sigma_2^{\txtin} := \kappa_{12}(\Sigma_1^{out } ) = \left\{x_2 \in \left[ -\frac{1}{2}\delta^{-\frac{1}{4}}, \frac{1}{2}\delta^{-\frac{1}{4}} \right],  y_2 = -\frac{1}{4}\delta ^{-\frac{1}{2}} , ~~ r_2 = \epsilon^{\frac{1}{4}}= \rho \delta^{\frac{1}{4}}, ~~ h_2 = \nu \delta^{\frac{1}{2}} \right\}. \]

For the transformation of \eqref{discrete} via the chart \(K_2\), first observe that since \(\bar{\epsilon} = \epsilon\) we have \begin{equation}\label{r2equal}
\bar{r}_2 = r_2 ~.
\end{equation}

Using the coordinates of \(K_2\), the remaining equations from~\eqref{discrete} become

\begin{align*}
\bar{r}_2\bar{x}_2 &=r_2 x_2+ r_2^{-2} h_2 \left[r_2 x_2( r_2^2 y_2- r_2^2 x_2^2)+\lambda r_2^4 \right] ~,\\
\bar{r}_2^2\bar{y}_2 &= r_2^2 y_2+ r_2^{-2} h_2 r_2^4 ~,\\
\bar{r}_2^{-2}\bar{h}_2 &=r_2^{-2} h_2 ~,
\end{align*}

which can be simplified with \eqref{r2equal} so that we get in total
\begin{align*}
\bar{x}_2 &=x_2+h_2\left[x_2(y_2-x_2^2)+\lambda r_2\right] ~,\\
\bar{y}_2 &= y_2+h_2 ~,\\
\bar{r}_2 &=r_2 ~,\\
\bar{h}_2 &=h_2 ~.
\end{align*}

Since \(r_2\) and \(h_2\) stay constant in this chart, we can plug in the values from  $\Sigma_2^{\txtin}$ to write the maps as
 \begin{align}\label{rescalesys}
 \begin{split}
\bar{x}_2 &=x_2+\nu \delta^{\frac{1}{2}}\left(x_2(y_2-x_2^2)+\lambda \rho \delta^{\frac{1}{4}}\right) =: f(x_2, y_2) ~,\\
\bar{y}_2 &= y_2+\nu \delta^{\frac{1}{2}} ~.
 \end{split}
\end{align} 

We denote this two-dimensional map by 
\[F : \R^2 \to \R^2, ~~ F(x_2,y_2) = \left( f(x_2,y_2), ~ y_2+ \nu  \delta^{\frac{1}{2}}  \right)\,.\]
Furthermore, we abbreviate 
\[ \tilde{\lambda} :=  \lambda \rho \delta^{\frac{1}{4}}\,,  \]

such that we have \(f(x_2, y_2) = x_2 + \nu\delta^{\frac{1}{2}} \left(x_2(y_2-x_2^2)+\tilde{\lambda} \right) \). We will see that the appropriate exiting set in chart $K_2$ is given by \[ \Sigma_2^{\txtout} :=  \left\{ x_2 \in \left[  \frac{1}{4} \min(\lambda \rho,2 ) \delta^{-\frac{1}{4}},  \frac{1}{2}\delta^{-\frac{1}{4}} + \mu \right],  y_2 = \frac{1}{4}\delta ^{-\frac{1}{2}} , ~~ r_2 = \epsilon^{\frac{1}{4}}= \rho \delta^{\frac{1}{4}}, ~~ h_2 = \nu \delta^{\frac{1}{2}} \right\}. \]

Recall from chart $K_1$ the attracting center manifold $M_{\txta,1}^{0}$ (see Proposition~\ref{centermanifold_K1}). Similarly to \cite{EngelKuehn18}, this manifold corresponds with the global manifold $M_{\txta}^{0}$ on the blow-up manifold $B$. In chart $K_2$ we therefore have the attracting center manifold $M_{\txta,2}^{0} = \kappa_{12} \left(M_{\txta,1}^{0}\right)$ (with, reversely, $M_{\txta,1}^{0} = \kappa_{21} \left(M_{\txta,2}^{0}\right)$), whose behaviour is described in the following main result of this section (recall that we are still in the setting $\lambda>0$).

\begin{theorem}
\label{mainresult}
The set 
	\[ I_1:= \left[ -\frac{1}{2}\delta^{-\frac{1}{4}},  \frac{1}{2}\delta^{-\frac{1}{4}} \right] \times \{-\frac{1}{4}\delta^{-\frac{1}{2}}\} \subset \R^2\]
	gets mapped by iteration of \(F\) into the set 
	\[  I_4:=\left[  \frac{1}{4} \min(\lambda \rho,2 ) \delta^{-\frac{1}{4}},  \frac{1}{2}\delta^{-\frac{1}{4}} + \mu \right] \times \{\frac{1}{4}\delta^{-\frac{1}{2}}\}   \subset \R^2\,, \]
where \( \mu = \sqrt{2\lambda\rho}  \delta^{\frac{1}{4}} \). In particular, the transition mapping~ \(\Pi_2 : \Sigma_2^{\txtin} \to \Sigma_2^{\txtout}\) ~ is  well  defined and maps $M_{\txta,2}^{0} \cap \Sigma_2^{\txtin} $ to $M_{\txta,2}^{0} \cap \Sigma_2^{\txtout} $.
\end{theorem}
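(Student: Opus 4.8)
\emph{Plan of proof.}
The plan is to view $F$ as a non-autonomous scalar iteration for $x_2$ driven by the arithmetic progression $y_2(n) = -\tfrac14\delta^{-1/2} + n\,\nu\delta^{1/2}$, so that every point of $I_1$ reaches the level $y_2 = \tfrac14\delta^{-1/2}$ after the same number $N_2 = \tfrac{1}{2\nu\delta}$ of steps (ignoring, as announced, that $N_2$ need not be an integer). Since $r_2$ and $h_2$ are invariant under $F$ and $\Sigma_2^{\txtin},\Sigma_2^{\txtout}$ are exactly $I_1, I_4$ with the preserved values $r_2 = \rho\delta^{1/4}$, $h_2 = \nu\delta^{1/2}$ appended, the assertion on $\Pi_2$ is equivalent to $F^{N_2}(I_1) \subset I_4$, which I would prove by constructing explicit discrete barriers (one-step super- and subsolutions) on the two subintervals $y_2 \le 0$ and $y_2 \ge 0$.

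\emph{Entering phase $(y_2 \le 0)$.} The crucial point here is that, for each fixed $y_2$ with $-\tfrac14\delta^{-1/2} \le y_2 \le 0$, the map $x_2 \mapsto f(x_2,y_2)$ is a contraction on $|x_2| \le \tfrac12\delta^{-1/4}$: its derivative $1 + \nu\delta^{1/2}(y_2 - 3x_2^2)$ lies in $(1-\nu,1]\subset(-1,1)$ because $|y_2|\le\tfrac14\delta^{-1/2}$, $3x_2^2 \le \tfrac34\delta^{-1/2}$, and $\nu = h\rho^2$ is small. Together with the attracting center-stable manifold $M_{\txta,2}^{0} = \kappa_{12}(M_{\txta,1}^{0})$ of Proposition~\ref{centermanifold_K1} --- which in chart $K_2$ is the graph of a function $x_2 = m(y_2)$ with $m(0)$ positive of order $\tilde\lambda^{1/3}$ --- this uniform contraction funnels every trajectory issuing from $I_1$ into an (exponentially) thin neighbourhood of $m(\cdot)$ while keeping $|x_2|\le\tfrac12\delta^{-1/4}$; in particular it forces $x_2 > 0$ at $y_2 = 0$, the forcing $\tilde\lambda>0$ being what pins the quasi-equilibrium $x_2 \approx -\tilde\lambda/y_2$ on the positive side. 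This also settles the statement about $M_{\txta,2}^{0}$ on the entering part: $M_{\txta,2}^{0}$ is forward invariant under $F$, and $\kappa_{12}$ carries the nonempty set $M_{\txta,1}^{0}\cap\Sigma_1^{\txtout}$ from Lemma~\ref{lem:transition1} into $\Sigma_2^{\txtin}\cap M_{\txta,2}^{0}$.

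\emph{Exiting phase $(y_2 \ge 0)$.} For the lower bound I would use that $x_2(y_2-x_2^2)\ge 0$ whenever $0\le x_2\le\sqrt{y_2}$, so $\bar x_2 \ge x_2 + \nu\delta^{1/2}\tilde\lambda$ on that region; iterating from $x_2>0$ at $y_2=0$ gives $x_2(n)\ge\tilde\lambda\,y_2(n)$ as long as the trajectory stays below $\sqrt{y_2}$. A complementary one-step computation at $x_2=\sqrt{y_2}$ --- using that the factor $1-2\nu\delta^{1/2}y_2$ stays nonnegative for $y_2\le\tfrac14\delta^{-1/2}$ and that $\tilde\lambda \ge \tfrac{1}{2\sqrt{y_2}}$ once $y_2 \ge \tfrac{1}{4\tilde\lambda^2}$ --- shows that once the trajectory reaches the curve $\sqrt{y_2}$ it cannot drop below it again. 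Evaluating the two alternatives at $y_2=\tfrac14\delta^{-1/2}$ yields $x_2 \ge \min\!\big(\tilde\lambda\cdot\tfrac14\delta^{-1/2},\ \sqrt{\tfrac14\delta^{-1/2}}\big) = \tfrac14\min(\lambda\rho,2)\,\delta^{-1/4}$. For the upper bound I would glue the entering-phase funnel (which already confines $x_2$ near $m$) to a discrete supersolution of the form $\sqrt{y_2+a}$ with $a = \bigo{\tilde\lambda^2}$ for $y_2$ bounded away from $0$; at $y_2 = \tfrac14\delta^{-1/2}$ this produces $x_2 \le \tfrac12\delta^{-1/4} + \mu$ with $\mu = \sqrt{2\lambda\rho}\,\delta^{1/4}$, a bound chosen generously enough to absorb both the slow-manifold correction $\bigo{\tilde\lambda\delta^{1/2}}$ and the hand-off to chart $K_3$ through $\kappa_{23}$. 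Invariance of $M_{\txta,2}^{0}$ together with $F^{N_2}(I_1)\subset I_4$ then forces $M_{\txta,2}^{0}\cap\Sigma_2^{\txtin}$ onto $M_{\txta,2}^{0}\cap\Sigma_2^{\txtout}$, completing the proof.

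\emph{Main obstacle.} The delicate step is the exiting phase, because at the working scale $x_2 \sim \delta^{-1/4}$ the product of the time step $\nu\delta^{1/2}$ with the cubic term $x_2^3$ is of order $\nu\delta^{-1/4}$, which is \emph{not} small; hence $F$ cannot be compared with the flow of $\dot x_2 = x_2(y_2-x_2^2)+\tilde\lambda$ by a naive Euler estimate, and the barriers must be designed directly for the cubic one-step map. In particular one has to keep every iterate strictly below the threshold $|x_2| < \sqrt{2/(3\nu\delta^{1/2})}$, beyond which $x_2 \mapsto f(x_2,y_2)$ becomes expanding and trajectories would escape in growing oscillations (exactly the mechanism seen in the discussion around Lemma~\ref{lem:transition1}); this is where the precise cubic-map estimates advertised in the abstract enter, and it is the technical core of the argument.
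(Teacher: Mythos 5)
Your overall route is the paper's own: freeze $y_2$, analyse the cubic one-step map via explicit barriers, contract for $y_2<0$, and for $y_2>0$ trap trajectories between the line $x_2=\tilde{\lambda}y_2$ and the parabola $x_2^2=y_2$ --- your two ``alternatives'' and the one-step check at $x_2=\sqrt{y_2}$ are exactly the invariance of $S_2\cup S_3$ in Proposition~\ref{prop:I3_to_I4}, and they yield the same lower bound $\tfrac14\min(\lambda\rho,2)\delta^{-\frac14}$. But two steps fail as written. In the entering phase your contraction claim is false at the top of the range: on $|x_2|\le\tfrac12\delta^{-\frac14}$, $y_2\in[-\tfrac14\delta^{-\frac12},0]$ the derivative $1+\nu\delta^{\frac12}y_2-3\nu\delta^{\frac12}x_2^2$ fills the interval $[1-\nu,1]$ and equals $1$ at $(x_2,y_2)=(0,0)$, so it is not contained in $(-1,1)$ and $f(\cdot,y_2)$ admits no uniform contraction constant up to $y_2=0$. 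The paper therefore contracts only on $y_2\in[-\tfrac14\delta^{-\frac12},-\tfrac18\delta^{-\frac12}]$ (Lemma~\ref{lem:contraction}, Proposition~\ref{prop:I1_to_I2}) and covers $[-\tfrac18\delta^{-\frac12},0)$ by invariance of $[0,\tilde{\lambda}^{\frac13}]$ (Proposition~\ref{prop:I2_to_I3}); moreover it tracks the explicit frozen fixed-point curve $x_2^*(y_2)$ of Lemma~\ref{lem:curve_of_fp}, whereas you rely on the location of the centre-manifold graph $m$ (with $m(0)$ of order $\tilde{\lambda}^{\frac13}$), which you assert but never establish.

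The more serious gap is the upper bound for $y_2\ge 0$. A curve $x_2=\sqrt{y_2+a}$ is a one-step supersolution only if $\tilde{\lambda}-a\sqrt{y_2+a}$ does not exceed the increment $\sqrt{y_2+a+\nu\delta^{\frac12}}-\sqrt{y_2+a}$, and this quantitative condition is never verified for your choice $a=\bigo{\tilde{\lambda}^2}$; worse, at the gluing point $y_2=0$ your barrier starts at height $\sqrt{a}=\bigo{\tilde{\lambda}}$, far below $\tilde{\lambda}^{\frac13}$, i.e.\ below where you yourself place the trajectories emerging from the entering phase, so the hand-off ``glue the funnel to the supersolution'' is internally inconsistent and the estimate $x_2\le\tfrac12\delta^{-\frac14}+\mu$ at exit is not actually derived. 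The paper avoids this entirely: by monotonicity of $f(\cdot,y_2)$ (Lemma~\ref{lem:increasing_invariance}) and monotonicity of $y_2\mapsto x_2^*(y_2)$, the region $S_1$ below the fixed-point curve is forward invariant, and the elementary computation $x_2^*(\tfrac14\delta^{-\frac12})\le\tfrac12\delta^{-\frac14}+\mu$ then gives the right endpoint of $I_4$. Your scheme can be repaired either by that substitution or by taking $a\ge\tilde{\lambda}^{\frac23}$ (so that $\sqrt{a}\ge\tilde{\lambda}^{\frac13}$ dominates the entry data while the exit value still stays below $\tfrac12\delta^{-\frac14}+\mu$), but as stated the upper estimate, and with it $F^{N_2}(I_1)\subset I_4$, is not proved.
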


For the proof of this theorem we will analyze the evolution of our starting set at different heights (see Figure~\ref{fig:rescale}), treated in the propositions below. Before we do that, we collect some properties of the function \(f\) in the following Lemmas~\ref{lem:curve_of_fp}--\ref{lem:contraction}. 

\vspace{0.3cm}

\begin{figure}[ht] 

\begin{center}
	
	\begin{overpic}[width=0.4\textwidth]{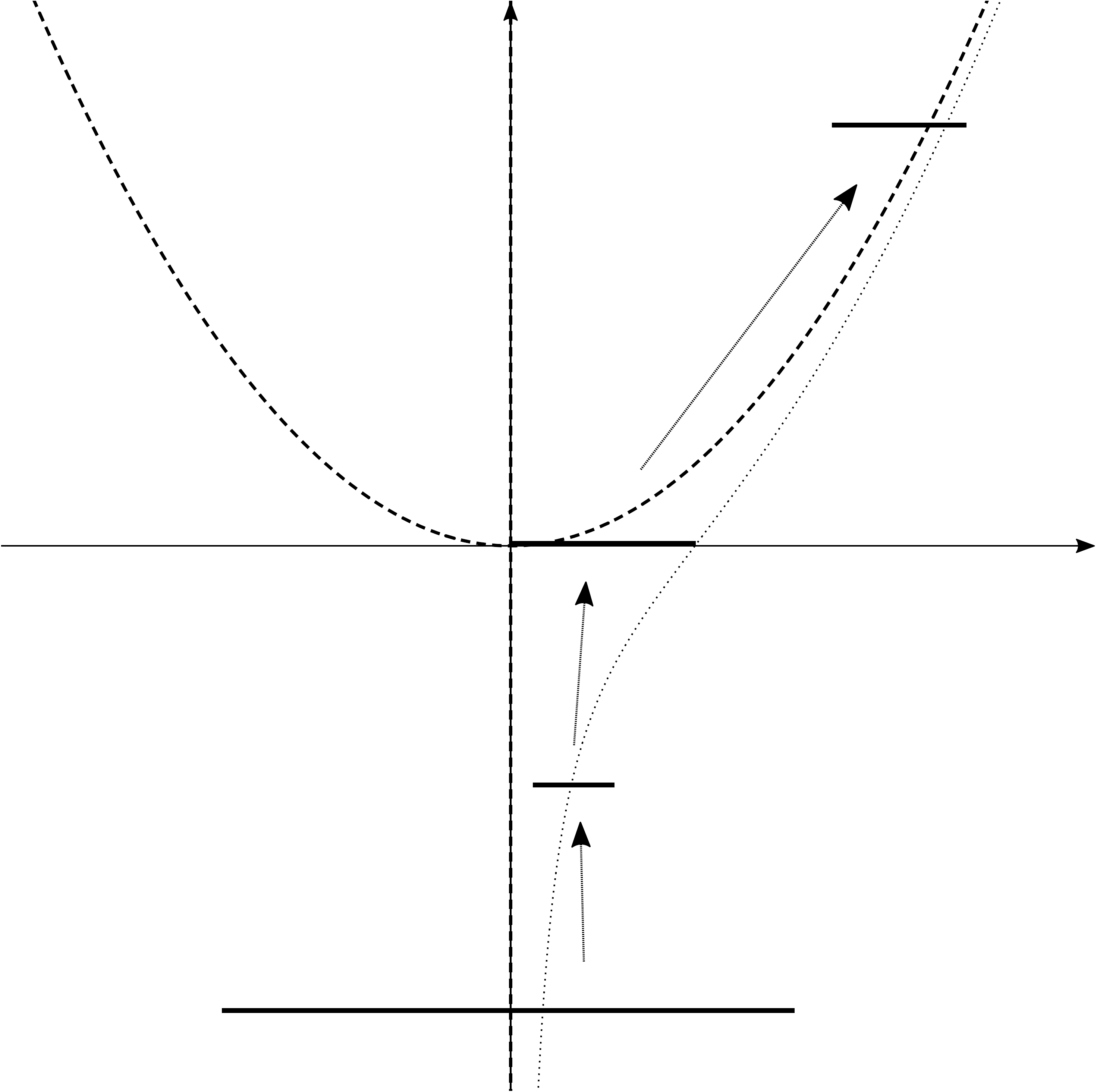}
		\footnotesize{
		\put(98,52){\(x_2\)}
		\put(38,98){\(y_2\)}
		\put(80,5){\(I_1\)}
		\put(61,25){\(I_2\)}
		\put(67,44){\(I_3\)}
		\put(93,86){\(I_4\)}
	}		
	\end{overpic}
	
	\caption{Path from  \( I_1 \)  to \(  I_4 \) (Theorem~\ref{mainresult}) via $I_1$ to $I_2$ (Proposition~\ref{prop:I1_to_I2}), $I_2$ to $I_3$ (Proposition~\ref{prop:I2_to_I3}) and $I_3$ to $I_4$ (Proposition~\ref{prop:I3_to_I4}).}\label{fig:rescale}
	
	\end{center}
\end{figure}

Firstly, we characterize the positive fixed points of $f(\cdot, y_2)$ on $y_2$-fibres.

\begin{lemma}\label{lem:curve_of_fp}
	
For any $y_2 \in \mathbb{R}$, the mapping \(x_2 \mapsto f(x_2, y_2) \) has precisely one positive fixed point \(x_2^*(y_2)\), satisying the equation
	\begin{equation}\label{fixedpoint}
	 y_2 = x_2^*(y_2)^2 - \frac{\tilde{\lambda}}{x_2^*(y_2)} .
	\end{equation}
	As long as \(y_2 \in [-\frac{1}{4}\delta^{-\frac{1}{2}} , \frac{1}{4}\delta^{-\frac{1}{2}} ]\), the family of fixed points \(x_2^*(y_2)\) is monotonically increasing and satisfies \(x_2^*(y_2) < \frac{1}{2}\delta^{-\frac{1}{4}} +1 \) .
\end{lemma}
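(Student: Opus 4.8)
The plan is to analyze the fixed-point equation $f(x_2,y_2)=x_2$ directly. Since $f(x_2,y_2)=x_2+\nu\delta^{1/2}\bigl(x_2(y_2-x_2^2)+\tilde\lambda\bigr)$ and $\nu\delta^{1/2}>0$, the equation $f(x_2,y_2)=x_2$ is equivalent to $x_2(y_2-x_2^2)+\tilde\lambda=0$, i.e.\ to $g(x_2):=x_2^3-y_2x_2-\tilde\lambda=0$. Recalling $\tilde\lambda=\lambda\rho\delta^{1/4}>0$ in the current setting $\lambda>0$, I first argue that $g$ has exactly one positive root: $g(0)=-\tilde\lambda<0$, $g(x_2)\to+\infty$ as $x_2\to+\infty$, and on $(0,\infty)$ the derivative $g'(x_2)=3x_2^2-y_2$ either has no positive zero (if $y_2\le 0$, so $g$ is strictly increasing) or exactly one, at $x_2=\sqrt{y_2/3}$, which is a local minimum; in the latter case $g$ is decreasing then increasing on $(0,\infty)$, starting from $g(0)<0$, so it still crosses zero exactly once. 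This gives the unique positive fixed point $x_2^*(y_2)$, and dividing $g(x_2^*)=0$ by $x_2^*>0$ yields precisely \eqref{fixedpoint}.

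For monotonicity and the upper bound I would use the implicit function theorem on $g(x_2^*(y_2),y_2)=0$. Differentiating gives $x_2^{*\prime}(y_2)=\dfrac{x_2^*}{3(x_2^*)^2-y_2}$; the numerator is positive, and the denominator equals $g'(x_2^*)$, which is positive because $x_2^*$ lies to the right of the unique critical point of $g$ on $(0,\infty)$ (that is where $g$ is increasing through its root), so $x_2^*$ is strictly increasing in $y_2$. Hence on the interval $y_2\in[-\tfrac14\delta^{-1/2},\tfrac14\delta^{-1/2}]$ it suffices to bound $x_2^*$ at the right endpoint $y_2=\tfrac14\delta^{-1/2}$. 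There \eqref{fixedpoint} reads $\tfrac14\delta^{-1/2}=(x_2^*)^2-\tilde\lambda/x_2^*$, so $(x_2^*)^2=\tfrac14\delta^{-1/2}+\tilde\lambda/x_2^*\le \tfrac14\delta^{-1/2}+\text{(small)}$ once one observes $x_2^*$ is bounded below by a positive constant times $\delta^{-1/4}$ (from the same relation, since $(x_2^*)^2\ge\tfrac14\delta^{-1/2}$ forces $x_2^*\ge\tfrac12\delta^{-1/4}$), making the correction term $\tilde\lambda/x_2^*=\mathcal O(\delta^{1/2})$. Then $x_2^*\le\sqrt{\tfrac14\delta^{-1/2}+\mathcal O(\delta^{1/2})}=\tfrac12\delta^{-1/4}\sqrt{1+\mathcal O(\delta)}<\tfrac12\delta^{-1/4}+1$ for $\delta$ small, which is the claimed estimate (and for $\delta$ bounded by the fixed $\delta_0$ chosen earlier this is uniform).

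The only mildly delicate point is making the ``$\tilde\lambda/x_2^*$ is negligible'' step clean, since it is slightly circular: one needs a lower bound on $x_2^*$ to bound the correction, but the lower bound itself comes from the same equation. This is resolved by the two-sided reading of \eqref{fixedpoint} at the right endpoint: $(x_2^*)^2-\tfrac14\delta^{-1/2}=\tilde\lambda/x_2^*>0$ gives $x_2^*>\tfrac12\delta^{-1/4}$ unconditionally, and then $(x_2^*)^2<\tfrac14\delta^{-1/2}+2\lambda\rho\delta^{1/2}$ for small $\delta$, so the bound $x_2^*<\tfrac12\delta^{-1/4}+1$ follows. I expect no real obstacle here; the argument is elementary calculus plus the implicit function theorem, and all constants can be absorbed into the already-fixed smallness of $\delta$.
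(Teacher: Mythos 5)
Your proposal is correct and takes essentially the same route as the paper: both reduce the fixed-point condition to the algebraic equation $x_2(y_2-x_2^2)+\tilde{\lambda}=0$ and exploit monotonicity to get uniqueness, monotone dependence on $y_2$, and the bound. The only differences are in execution: the paper reads the relation as the increasing graph $y_2=x_2^2-\tilde{\lambda}/x_2$ over $x_2>0$ and gets both the monotonicity of $x_2^*(y_2)$ and the estimate by checking that the single point $\left(\tfrac12\delta^{-1/4}+1,\tfrac14\delta^{-1/2}\right)$ lies to the right of this curve (using $\tilde{\lambda}<1$), whereas you argue via sign analysis of the cubic, the implicit function theorem, and an endpoint bootstrap that additionally needs $\delta$ small — harmless given the standing smallness assumptions.
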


\begin{proof}

Fixed points of \(f(\cdot, y_2)\) are characterized by the equation 
\[x_2(y_2-x_2^2)+\tilde{\lambda} = 0 ~.  \] 
As \(x_2 = 0 \) is not a solution, we can solve this for \(y_2\) and obtain
  \[  y_2 = x_2^2 - \frac{\tilde{\lambda}}{x_2} ~. \]

\begin{figure}[ht]
	\begin{center}
		\begin{overpic}[width=0.5\textwidth]{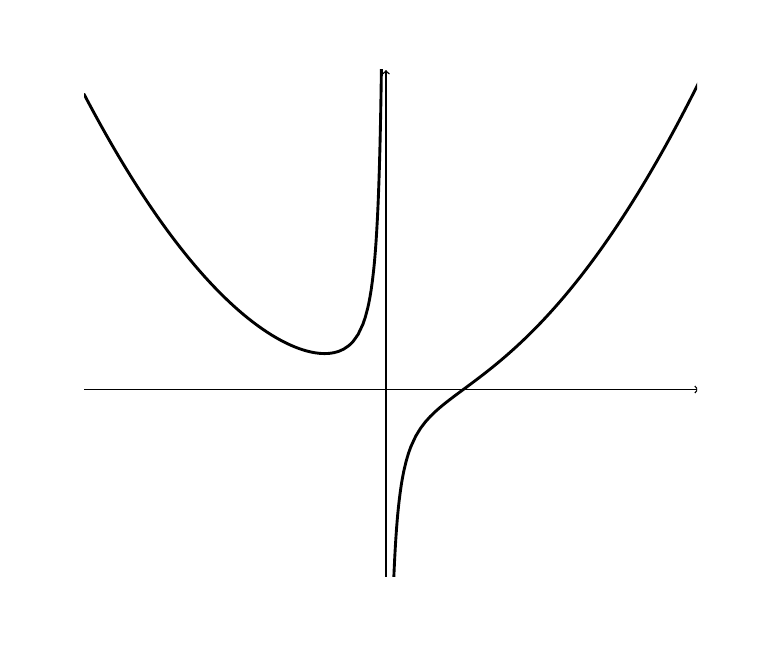}
		\put(90,32){\(x_2\)}
		\put(50,75){\(y_2\)}
		\put(70,65){\(x_2^*(y_2)\)}	
		\end{overpic}
		\caption{The curves of fixed points of \(f(\cdot, y_2)\) in the {\((x_2,y_2)\)}-plane.}
		\label{fig:fixedpoints}
	\end{center}
\end{figure}

The graph of \(x_2 \mapsto x_2^2 - \frac{\tilde{\lambda}}{x_2}\) consists of two branches. Notice that \(x_2^2 - \frac{\tilde{\lambda}}{x_2} > 0  \) for \(x_2 < 0 \). For \(x_2 >0 \) the graph is monotonically increasing (since the summands \(x_2^2\) and \(- \frac{\tilde{\lambda}}{x_2}\) are). Thus we have a curve of positive fixed points, that we can also parametrize by \(y_2\), call them \(x_2^*(y_2)\). It remains to verify the bound of \(x_2^*(y_2)\). Using \(\tilde{\lambda} < 1\), we have

\[ \left( \frac{1}{2}\delta^{-\frac{1}{4}} +1  \right)^2 - \tilde{\lambda} \left(\frac{1}{2}\delta^{-\frac{1}{4}} +1\right)^{-1} > \frac{1}{4}\delta^{-\frac{1}{2}} + 1 - 1 =  \frac{1}{4}\delta^{-\frac{1}{2}}.\]

Consequently the point
\( \left(\frac{1}{2}\delta^{-\frac{1}{4}}+1,\frac{1}{4}\delta^{-\frac{1}{2}}\right)\) 
lies below the graph of \(x_2 \mapsto x_2^2 - \frac{\tilde{\lambda}}{x_2} \), so right of the curve  \(\{ \left(x_2^*(y_2), y_2 \right)  \}\) of fixed points. Since the curve is increasing, all other fixed points \(x_2^*(y_2) \) with \(y_2 \leq \frac{1}{4}\delta^{-\frac{1}{2}} \) also satisfy the estimate \(x_2^*(y_2) \leq \frac{1}{2} \delta^{^-\frac{1}{4}} + 1\). 
\end{proof}

Secondly, we find $f(\cdot, y_2)$ to be monotonically increasing on a suitable interval and, by using Lemma~\ref{lem:curve_of_fp}, we find invariant sets under $f(\cdot, y_2)$.

\begin{lemma}
\label{lem:increasing_invariance}
Let \(y_2 \in [-\frac{1}{4}\delta^{-\frac{1}{2}} , \frac{1}{4}\delta^{-\frac{1}{2}} ]\). Then the following holds:
\begin{enumerate}
\item The function \(f(\cdot, y_2) \)  is monotonically increasing on  \([-\frac{1}{2}\delta^{-\frac{1}{4}}-1, \frac{1}{2}\delta^{-\frac{1}{4}}+1]\) for all \(\nu\) sufficiently small. \label{lem:increasing}
\item The set  \( [0, \frac{1}{2}\delta^{-\frac{1}{4}}+1]\)  is (positively) invariant under \(f(\cdot, y_2)\).
Furthermore,  when \(y_2 \leq 0
\), the set
\([-\frac{1}{2}\delta^{-\frac{1}{4}}, \frac{1}{2}\delta^{-\frac{1}{4}}]\) is (positively) invariant under \(f(\cdot, y_2)\). \label{lem:invariant}
\end{enumerate}
\end{lemma}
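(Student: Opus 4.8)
The plan is to prove the two claims of Lemma~\ref{lem:increasing_invariance} separately, with the monotonicity statement (1) feeding into the invariance statement (2).

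\textbf{Part (1): Monotonicity.} First I would compute the derivative of $f(\cdot,y_2)$ explicitly. Since $f(x_2,y_2) = x_2 + \nu\delta^{1/2}\bigl(x_2(y_2-x_2^2)+\tilde\lambda\bigr)$, we get
\[
\partial_{x_2} f(x_2,y_2) = 1 + \nu\delta^{1/2}\bigl(y_2 - 3x_2^2\bigr).
\]
On the interval $x_2\in[-\tfrac12\delta^{-1/4}-1,\tfrac12\delta^{-1/4}+1]$ and with $y_2\in[-\tfrac14\delta^{-1/2},\tfrac14\delta^{-1/2}]$, the term $y_2-3x_2^2$ is bounded below by something of order $-\delta^{-1/2}$ (roughly $-3(\tfrac12\delta^{-1/4}+1)^2 - \tfrac14\delta^{-1/2}$, which is $\ge -\delta^{-1/2}(\tfrac34 + \tfrac14) - \text{lower order} = -\delta^{-1/2} + O(\delta^{-1/4})$). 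Hence $\nu\delta^{1/2}(y_2-3x_2^2) \ge -\nu + O(\nu\delta^{1/4})$, so $\partial_{x_2}f \ge 1-\nu+O(\nu\delta^{1/4}) > 0$ once $\nu$ is small enough (say $\nu < 1/2$ and $\delta$ bounded). This gives strict monotonicity on the stated interval. I would just need to be slightly careful that the bound on $y_2 - 3x_2^2$ uses the worst case at the endpoints $x_2 = \pm(\tfrac12\delta^{-1/4}+1)$ and $y_2$ arbitrary in its range; this is a one-line estimate.

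\textbf{Part (2): Invariance.} For the set $[0,\tfrac12\delta^{-1/4}+1]$: by monotonicity from Part (1), it suffices to check that the endpoints map back into the interval, i.e. $f(0,y_2)\ge 0$ and $f(\tfrac12\delta^{-1/4}+1, y_2)\le \tfrac12\delta^{-1/4}+1$. The left endpoint gives $f(0,y_2)=\nu\delta^{1/2}\tilde\lambda > 0$ since $\tilde\lambda=\lambda\rho\delta^{1/4}>0$. For the right endpoint, I would use Lemma~\ref{lem:curve_of_fp}: the point $(\tfrac12\delta^{-1/4}+1,\tfrac14\delta^{-1/2})$ lies to the right of the fixed-point curve, meaning $\tfrac12\delta^{-1/4}+1 > x_2^*(y_2)$ for all $y_2\le\tfrac14\delta^{-1/2}$; since $f(\cdot,y_2)$ is increasing and $f(x_2,y_2) < x_2$ exactly when $x_2 > x_2^*(y_2)$ (the unique positive fixed point, with $f(x_2,y_2)-x_2 = \nu\delta^{1/2}(x_2(y_2-x_2^2)+\tilde\lambda)$ changing sign downward there — this needs a brief sign analysis of the cubic $x_2\mapsto x_2(y_2-x_2^2)+\tilde\lambda$ showing it is negative for $x_2 > x_2^*(y_2)$ in the relevant range), we get $f(\tfrac12\delta^{-1/4}+1,y_2) < \tfrac12\delta^{-1/4}+1$. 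Combined with monotonicity, the whole interval is mapped into itself.

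\textbf{The symmetric interval for $y_2\le 0$.} For $[-\tfrac12\delta^{-1/4},\tfrac12\delta^{-1/4}]$ with $y_2\le 0$, again by monotonicity it suffices to control the endpoints. The right endpoint is handled as above (since $\tfrac12\delta^{-1/4} \ge x_2^*(y_2)$ when $y_2 \le 0$ — one checks $x_2^*(0)$ satisfies $x_2^3 = \tilde\lambda$, so $x_2^*(0) = \tilde\lambda^{1/3} = (\lambda\rho)^{1/3}\delta^{1/12}$, which is far less than $\tfrac12\delta^{-1/4}$ for small $\delta$, and $x_2^*$ is increasing so $x_2^*(y_2)\le x_2^*(0)$ for $y_2\le0$). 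For the left endpoint $x_2=-\tfrac12\delta^{-1/4}$, I need $f(-\tfrac12\delta^{-1/4},y_2)\ge -\tfrac12\delta^{-1/4}$, i.e. $\nu\delta^{1/2}\bigl(-\tfrac12\delta^{-1/4}(y_2-\tfrac14\delta^{-1/2})+\tilde\lambda\bigr)\ge 0$. Since $y_2\le 0$, we have $y_2 - \tfrac14\delta^{-1/2} < 0$, so $-\tfrac12\delta^{-1/4}(y_2-\tfrac14\delta^{-1/2}) > 0$ and adding $\tilde\lambda>0$ keeps it positive; hence $f(-\tfrac12\delta^{-1/4},y_2) > -\tfrac12\delta^{-1/4}$. (Note the asymmetry: this is exactly why the restriction $y_2\le 0$ is needed — for positive $y_2$ the fast flow pushes $x_2$ away from $0$ and the symmetric interval would not be invariant, which is consistent with the bifurcation picture.)

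\textbf{Main obstacle.} The routine part is all the endpoint estimates; the one place requiring a little care is the sign analysis of the cubic $g(x_2) := x_2(y_2-x_2^2)+\tilde\lambda$ to conclude $f(x_2,y_2) < x_2$ for $x_2$ strictly between $x_2^*(y_2)$ and the right endpoint of the interval. Since $g$ has exactly one positive root $x_2^*(y_2)$ (by Lemma~\ref{lem:curve_of_fp}) and $g(x_2)\to-\infty$ as $x_2\to+\infty$, $g$ is negative on $(x_2^*(y_2),\infty)$, which is precisely what is needed. I expect this to be at most a couple of lines, so there is no serious difficulty — the lemma is essentially bookkeeping once the derivative formula and the fixed-point curve from Lemma~\ref{lem:curve_of_fp} are in hand.
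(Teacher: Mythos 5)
Your proposal is correct and follows essentially the same route as the paper: part (1) via the explicit derivative $1+\nu\delta^{1/2}(y_2-3x_2^2)$ bounded below uniformly for small $\nu$, and part (2) via monotonicity plus endpoint checks using the fixed-point curve $x_2^*(y_2)$ and its bounds from Lemma~\ref{lem:curve_of_fp}. The only cosmetic difference is that you verify the left endpoint $-\tfrac{1}{2}\delta^{-1/4}$ by a direct evaluation of the cubic, whereas the paper argues via the sign of $f(x_2,y_2)-x_2$ to the left of the unique fixed point when $y_2\le 0$; both are equivalent one-line arguments.
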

	
\begin{proof}
We compute the derivative \[ \tfrac{\partial}{\partial x_2} f(x_2, y_2) = 1 + \nu\delta^{\frac{1}{2}}y_2 - 3 \nu \delta^{\frac{1}{2}} x_2^2 ~,\] 
which shows that the cubic function \(f(\cdot, y_2)\) may have two stationary points located at \(x_2 = \pm \sqrt{\frac{1}{3}(\nu^{-1} \delta^{-\frac{1}{2}} +y_2 ) }\)  and then is monotonically increasing in between these. 

By choosing \(\nu\) sufficiently small, we can achieve that  the stationary points exist for all \(y_2 \in [-\frac{1}{4}\delta^{-\frac{1}{2}} , \frac{1}{4}\delta^{-\frac{1}{2}} ]\) and that \( \sqrt{\frac{1}{3}(\nu^{-1} \delta^{-\frac{1}{2}} +y_2 )} \geq \frac{1}{2} \delta^{-\frac{1}{4}} + 1 \). The choice of \(\nu\) can be made independently of \(\delta\), for all arbitrarily small \(\delta\). E. g. for \(\delta \leq 1 \) the inequality is fulfilled for all \(\nu \leq \frac{2}{15}\).
Hence, the first claim follows.

Since we have precisely one positive fixed point by Lemma~\ref{lem:curve_of_fp}, the graph of the continuous function \(f(\cdot, y_2)\) hits the diagonal precisely once. As \(f(0, y_2) = \nu \delta^{\frac{3}{4}}\lambda\rho >0 \) and \(\lim_{x_2 \to \infty} f(x_2,y_2) = -\infty\) we can conclude that it crosses the diagonal at \(x_2^*\) and  \(f(\cdot, y_2)\) lies above the diagonal (i.e. \(f(x_2,y_2) > x_2\)) for \(x_2 \in [0,x_2^*(y_2))\) and below the diagonal (i.e. \(f(x_2,y_2) < x_2\)) for  \(x_2 \in (x_2^*(y_2), \infty)\). If \(y_2 \leq 0\) we can widen the estimate \(f(x_2,y_2) < x_2\) to \(x_2 \in \left(-\infty, x_2^*(y_2)\right)\), because there is overall only one fixed point.

\begin{figure}[ht]
	\begin{center}
		\begin{overpic}[width=0.45\textwidth]{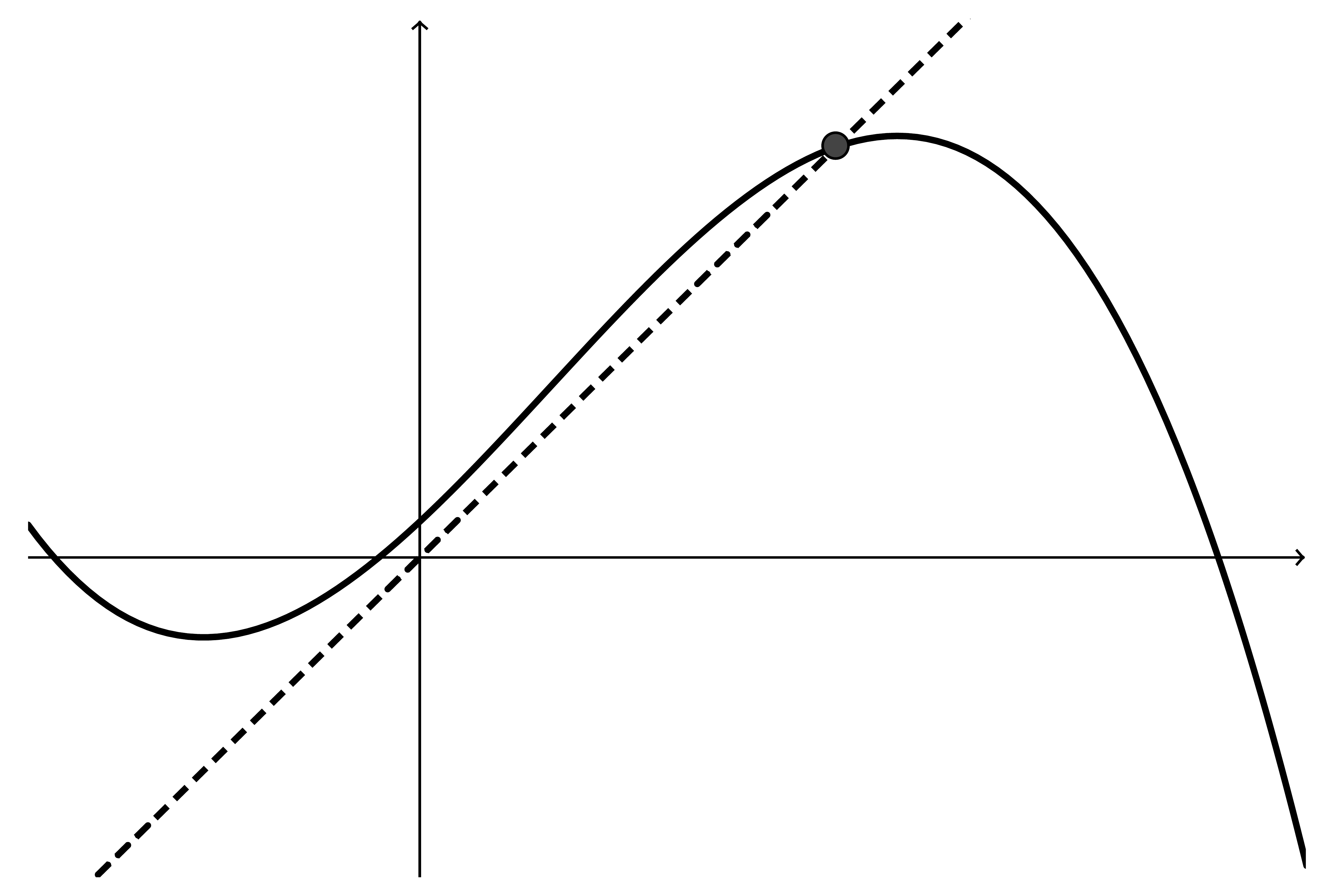}
			\footnotesize
			\put(96,28){\(x_2\)}
			
		\end{overpic}
		
		\caption{The graph of \(f(\cdot, y_2)\) together with the diagonal \(x_2 \mapsto x_2\) and the fixed point \(x_2^*(y_2)\).}
		\label{fig:stability}
	\end{center}
\end{figure}

We have already shown that \(f(\cdot, y_2)\) is increasing on \([-\frac{1}{2}\delta^{-\frac{1}{4}}-1, \frac{1}{2}\delta^{-\frac{1}{4}}+1]\). To prove the claimed invariance of \( [0, \frac{1}{2}\delta^{-\frac{1}{4}}+1]\) and  \([-\frac{1}{2}\delta^{-\frac{1}{4}}, \frac{1}{2}\delta^{-\frac{1}{4}}]\), it thus suffices to consider only the endpoints of the intervals.

Recall from Lemma~\ref{lem:curve_of_fp} that \(x_2^*(y_2) \leq \frac{1}{2} \delta^{^-\frac{1}{4}} + 1\) and, hence, we can conclude with the above that \(f(\frac{1}{2}\delta^{-\frac{1}{4}}+1, y_2) < \frac{1}{2}\delta^{-\frac{1}{4}}+1\). Since additionally \(f(0,y_2) > 0 \), the claimed positive invariance of \( [0, \frac{1}{2}\delta^{-\frac{1}{4}}+1]\) follows.

Regarding the invariance of \([-\frac{1}{2}\delta^{-\frac{1}{4}}, \frac{1}{2}\delta^{-\frac{1}{4}}]\) in case of \(y_2 \leq 0\), first note that the unique fixed point \(x_2^*(0)\) is given by \(x_2^*(0) = \tilde{\lambda}^{\frac{1}{3}} = \left( \delta^{\frac{1}{4}} \lambda \rho \right)^{\frac{1}{3}} \), which can be directly checked with the defining equation \eqref{fixedpoint}. As the curve of fixed points is increasing, we  can  estimate  \[ x_2^*(y_2) \leq x_2^*(0) = \left( \delta^{\frac{1}{4}} \lambda \rho \right)^{\frac{1}{3}} < \frac{1}{2}\delta^{-\frac{1}{4}}  \] for sufficiently small \(\delta\) and all \(y_2 \leq 0\).
With the considerations above, we thus have \[ f(-\frac{1}{2}\delta^{-\frac{1}{4}}, y_2) > -\frac{1}{2}\delta^{-\frac{1}{4}}  \text{~~~~and~~~~} 
f(\frac{1}{2}\delta^{-\frac{1}{4}}, y_2) < \frac{1}{2}\delta^{-\frac{1}{4}}~, \] yielding the invariance of the interval
\([-\frac{1}{2}\delta^{-\frac{1}{4}}, \frac{1}{2}\delta^{-\frac{1}{4}}]\).
\end{proof}

Finally, we can show the following contraction property of $f$ on relevant intervals.
\begin{lemma}\label{lem:contraction}
Let \(y_2 \in [-\frac{1}{4}\delta^{-\frac{1}{2}} , -\frac{1}{8}\delta^{-\frac{1}{2}} ]\). The function \(f(\cdot, y_2) \) restricted to the interval \([-\frac{1}{2}\delta^{-\frac{1}{4}}, \frac{1}{2}\delta^{-\frac{1}{4}}]\) is a contraction with constant \(1-\tfrac{\nu }{8} \).
\end{lemma}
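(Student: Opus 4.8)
The plan is to bound the partial derivative $\partial_{x_2} f(\cdot, y_2)$ uniformly over the relevant rectangle and then conclude by the mean value theorem on the convex interval $[-\tfrac12\delta^{-\frac14}, \tfrac12\delta^{-\frac14}]$, keeping $y_2$ fixed. From the expression for $f$ one has, exactly as computed in the proof of Lemma~\ref{lem:increasing_invariance}, the identity $\partial_{x_2} f(x_2, y_2) = 1 + \nu\delta^{\frac12} y_2 - 3\nu\delta^{\frac12}x_2^2$. So everything reduces to pinning this quantity into an interval of the form $[1-\nu, 1-\tfrac{\nu}{8}]$.

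For the upper estimate I would use that on the stated range $y_2 \le -\tfrac18\delta^{-\frac12}$, hence $\nu\delta^{\frac12} y_2 \le -\tfrac{\nu}{8}$, together with the fact that $-3\nu\delta^{\frac12}x_2^2 \le 0$; this gives $\partial_{x_2} f(x_2, y_2) \le 1 - \tfrac{\nu}{8}$. For the lower estimate I would use $y_2 \ge -\tfrac14\delta^{-\frac12}$, so $\nu\delta^{\frac12} y_2 \ge -\tfrac{\nu}{4}$, and $x_2^2 \le \tfrac14\delta^{-\frac12}$, so $3\nu\delta^{\frac12}x_2^2 \le \tfrac{3\nu}{4}$, giving $\partial_{x_2} f(x_2, y_2) \ge 1 - \nu$. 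Since $\nu$ is already taken small (in particular $\nu < \tfrac13$), both bounds lie in $(0,1)$, so $|\partial_{x_2} f(x_2, y_2)| \le 1 - \tfrac{\nu}{8}$ throughout the rectangle, and the mean value theorem yields $|f(x_2,y_2) - f(x_2',y_2)| \le (1-\tfrac{\nu}{8})|x_2 - x_2'|$ for all $x_2, x_2'$ in the interval.

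I do not expect any serious obstacle here: this is essentially a one-line derivative estimate. The only mild point to keep track of is that the cruder one-sided bound $1-\nu$ must still be compatible, after passing to absolute values, with the advertised constant $1-\tfrac{\nu}{8}$, which is automatic for the range of $\nu$ already fixed in Lemma~\ref{lem:increasing_invariance}, so that no new smallness condition on $\nu$ is introduced. One may also note that, since $y_2 \le 0$ on this range, the second part of Lemma~\ref{lem:increasing_invariance} makes the interval $[-\tfrac12\delta^{-\frac14}, \tfrac12\delta^{-\frac14}]$ positively invariant under $f(\cdot, y_2)$, so that $f(\cdot, y_2)$ is a genuine contracting self-map there, which is the form in which the statement will be applied.
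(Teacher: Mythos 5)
Your proposal is correct and follows essentially the same argument as the paper: both bound $\partial_{x_2} f(x_2,y_2) = 1 + \nu\delta^{\frac12}y_2 - 3\nu\delta^{\frac12}x_2^2$ above by $1-\tfrac{\nu}{8}$ on the rectangle and conclude via the mean value theorem. The only cosmetic difference is that you establish positivity of the derivative by the explicit lower bound $1-\nu$, whereas the paper cites the monotonicity statement of Lemma~\ref{lem:increasing_invariance} for the same purpose.
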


\begin{proof}
Recall from the proof of Lemma~\ref{lem:increasing_invariance} that the derivative reads \[ \tfrac{\partial}{\partial x_2} f(x_2, y_2) = 1 + \nu\delta^{\frac{1}{2}}y_2 - 3 \nu \delta^{\frac{1}{2}} x_2^2 .\] 
As stated in Lemma~\ref{lem:increasing_invariance}, the map \(f(\cdot, y_2)\) is monotonically increasing on \([-\frac{1}{2}\delta^{-\frac{1}{4}}, \frac{1}{2}\delta^{-\frac{1}{4}}]\) for \(y_2 \in [-\frac{1}{4}\delta^{-\frac{1}{2}} , -\frac{1}{8}\delta^{-\frac{1}{2}} ]\), and, hence, the derivative \(\frac{\partial}{\partial x_2}f(x_2, y_2)\) is non-negative for all \((x_2,y_2) \in E := [-\frac{1}{2}\delta^{-\frac{1}{4}}, \frac{1}{2}\delta^{-\frac{1}{4}}] \times [-\frac{1}{4}\delta^{-\frac{1}{2}} , -\frac{1}{8}\delta^{-\frac{1}{2}} ] \).
Thus we obtain 
\begin{align*}
\max_{(x_2,y_2) \in E} |\tfrac{\partial}{\partial x_2} f(x_2, y_2)|  &=  \max_{(x_2,y_2) \in E} \tfrac{\partial}{\partial x_2} f(x_2, y_2)  \\
&=  \max_{(x_2,y_2) \in E}  1 + \nu\delta^{\frac{1}{2}} y_2 - 3\nu\delta^{\frac{1}{2}} x_2^2 ~ \leq 1-\tfrac{1}{8} \nu\delta^{\frac{1}{2}}\delta^{-\frac{1}{2}} = 1-\tfrac{\nu}{8} ~.
\end{align*}
Hence, the claim follows by a standard application of the mean-value theorem.
\end{proof}

We now turn to showing the transitions from $I_i$ to $I_{i+1}$ for $i=1,2,3$ (see Figure~\ref{fig:rescale}). Each transition is formulated in one of the following Propositions.
\begin{proposition}\label{prop:I1_to_I2}
The set \( I_1 = \left[ -\frac{1}{2}\delta^{-\frac{1}{4}},  \frac{1}{2}\delta^{-\frac{1}{4}} \right] \times \{-\frac{1}{4}\delta^{-\frac{1}{2}}\}\) gets mapped by iterations of \eqref{rescalesys} into the set \(I_2 := \left[ 2 \delta^{\frac{3}{4}}\lambda\rho - \Delta(\delta), 8 \delta^{\frac{3}{4}}\lambda\rho + \Delta(\delta) \right] \times \{-\frac{1}{8}\delta^{-\frac{1}{2}}\}\),  where \( \Delta(\delta) := \frac{1}{2} \delta^{-\frac{1}{4}} \rme^{-\frac{1}{64}\delta^{-1}}\).
\end{proposition}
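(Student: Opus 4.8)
The plan is to reduce everything to the fibrewise maps $x_2\mapsto f(x_2,y_2)$ and the three preceding lemmas. First fix $\nu$ small enough that Lemmas~\ref{lem:increasing_invariance} and~\ref{lem:contraction} apply, and then take $\delta$ small. Each iteration of~\eqref{rescalesys} raises $y_2$ by exactly $\nu\delta^{1/2}$, so passing from the height $y_2=-\tfrac14\delta^{-1/2}$ of $I_1$ to the height $y_2=-\tfrac18\delta^{-1/2}$ of $I_4$'s predecessor $I_2$ requires $N=\tfrac{1}{8\nu\delta}$ steps (we again pretend this quotient is an integer, the error being irrelevant). Along the way $y_2(n)=-\tfrac14\delta^{-1/2}+n\,\nu\delta^{1/2}$ stays in $[-\tfrac14\delta^{-1/2},-\tfrac18\delta^{-1/2}]$, so by Lemma~\ref{lem:increasing_invariance} the interval $I:=[-\tfrac12\delta^{-1/4},\tfrac12\delta^{-1/4}]$ is positively invariant and every $f(\cdot,y_2(n))$ is monotonically increasing on it, while by Lemma~\ref{lem:contraction} every $f(\cdot,y_2(n))$ is a contraction on $I$ with constant $1-\tfrac\nu8$.

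The width estimate is then immediate: by monotonicity the $N$-th image of $I$ is an interval $[a_N,b_N]$, and by the contraction property
\[
  b_N-a_N\;\le\;\Bigl(1-\tfrac{\nu}{8}\Bigr)^{N}\delta^{-1/4}
  \;\le\;\rme^{-N\nu/8}\,\delta^{-1/4}
  \;=\;\rme^{-\frac{1}{64}\delta^{-1}}\,\delta^{-1/4}\;=\;2\,\Delta(\delta),
\]
using $\ln(1-\tfrac\nu8)\le-\tfrac\nu8$ and $N=\tfrac{1}{8\nu\delta}$.

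To pin down the location of $[a_N,b_N]$ it suffices to track one convenient orbit, namely $\hat x(0):=x_2^*(-\tfrac14\delta^{-1/2})$, $\hat x(n+1):=f(\hat x(n),y_2(n))$, where $x_2^*$ is the curve of positive fixed points from Lemma~\ref{lem:curve_of_fp}; since $x_2^*(-\tfrac14\delta^{-1/2})=\bigo{\delta^{3/4}}\in I$, its orbit satisfies $\hat x(N)\in[a_N,b_N]$. I would prove by induction that $x_2^*(-\tfrac14\delta^{-1/2})\le\hat x(n)\le x_2^*(y_2(n))$ for all $n\le N$: the upper bound uses that $f(\cdot,y_2(n))$ is increasing with fixed point $x_2^*(y_2(n))$ and that $y_2\mapsto x_2^*(y_2)$ is increasing (Lemma~\ref{lem:curve_of_fp}); the lower bound uses that for $y_2(n)\le0$ one has $f(x_2,y_2(n))\ge x_2$ whenever $x_2\le x_2^*(y_2(n))$ (as shown in the proof of Lemma~\ref{lem:increasing_invariance}). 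Hence $\hat x(N)\in[x_2^*(-\tfrac14\delta^{-1/2}),x_2^*(-\tfrac18\delta^{-1/2})]$, and combining with the width bound,
\[
  [a_N,b_N]\;\subseteq\;\bigl[\,x_2^*(-\tfrac14\delta^{-1/2})-2\Delta(\delta),\ x_2^*(-\tfrac18\delta^{-1/2})+2\Delta(\delta)\,\bigr].
\]

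Finally I would read off the two fixed-point values from~\eqref{fixedpoint}: substituting $x_2=c\,\lambda\rho\delta^{3/4}$ into $y_2=x_2^2-\tilde{\lambda}/x_2$ (recall $\tilde{\lambda}=\lambda\rho\delta^{1/4}$) gives $|y_2|\delta^{1/2}=\tfrac1c-c^2\lambda^2\rho^2\delta^2$, so $y_2=-\tfrac14\delta^{-1/2}$ forces $\tfrac1c=\tfrac14+c^2\lambda^2\rho^2\delta^2$ and $y_2=-\tfrac18\delta^{-1/2}$ forces $\tfrac1c=\tfrac18+c^2\lambda^2\rho^2\delta^2$. For $\delta$ small these yield $x_2^*(-\tfrac14\delta^{-1/2})\in(3\lambda\rho\delta^{3/4},4\lambda\rho\delta^{3/4})$ and $x_2^*(-\tfrac18\delta^{-1/2})\in(7\lambda\rho\delta^{3/4},8\lambda\rho\delta^{3/4})$, sitting at distance of order $\delta^{3/4}$ from $2\lambda\rho\delta^{3/4}$ and of order $\delta^{11/4}$ from $8\lambda\rho\delta^{3/4}$, respectively --- both much larger than $\Delta(\delta)$, which is exponentially small in $\delta^{-1}$. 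Plugging these bounds into the last inclusion gives $[a_N,b_N]\subseteq[2\delta^{3/4}\lambda\rho-\Delta(\delta),\,8\delta^{3/4}\lambda\rho+\Delta(\delta)]$, which is exactly the $x_2$-range of $I_2$. The one point requiring genuine care is this last estimate: one must make sure the small but nonzero upward drift of the fixed-point curve accumulated over $N$ steps (of order $\delta^{-1}$) does not push $\hat x(N)$ beyond $8\lambda\rho\delta^{3/4}$, which is precisely why the defining relation~\eqref{fixedpoint} and the particular intermediate heights $-\tfrac14\delta^{-1/2},-\tfrac18\delta^{-1/2}$ are chosen as they are, and why $I_2$ is stated with the generous window $[2\lambda\rho\delta^{3/4},8\lambda\rho\delta^{3/4}]$ rather than a tighter one.
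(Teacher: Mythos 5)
Your proof is correct and follows essentially the same route as the paper: count the $\tfrac{1}{8\nu\delta}$ steps, use the invariance, monotonicity and contraction of Lemmas~\ref{lem:increasing_invariance} and~\ref{lem:contraction} to bound the width, and locate the image along the fixed-point curve via equation~\eqref{fixedpoint} (your induction tracking the orbit of $x_2^*(-\tfrac14\delta^{-1/2})$ just makes explicit what the paper leaves implicit). Your bookkeeping yields width $2\Delta(\delta)$ instead of the paper's $\Delta(\delta)$, but, as you observe, the exponential smallness of $\Delta(\delta)$ compared with the polynomial gaps $x_2^*(-\tfrac14\delta^{-1/2})-2\lambda\rho\delta^{3/4}$ and $8\lambda\rho\delta^{3/4}-x_2^*(-\tfrac18\delta^{-1/2})$ absorbs this, so the stated inclusion into $I_2$ still holds.
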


\begin{proof}

It takes \( \frac{1}{8}\delta^{-\frac{1}{2}}\cdot (\nu \delta^{\frac{1}{2}})^{-1}  =  \frac{1}{8} \nu^{-1} \delta^{-1} \) iterations to get from \(y_2 = -\frac{1}{4}\delta^{-\frac{1}{2}}\) to \(y_2 = -\frac{1}{8}\delta^{-\frac{1}{2}}\) with steps of size \(\nu \delta^{\frac{1}{2}}\). 
Since \([-\frac{1}{2}\delta^{-\frac{1}{4}}, \frac{1}{2}\delta^{-\frac{1}{4}}]\) is invariant under \(f(\cdot, y_2)\) for all \(y_2 \in [ -\frac{1}{4}\delta^{-\frac{1}{2}}, -\frac{1}{8}\delta^{-\frac{1}{2}}]\)  by Lemma \ref{lem:increasing_invariance} and \(f(\cdot, y_2)\) is a contraction with constant \(1-\frac{\nu}{8}\) by Lemma \ref{lem:contraction}, the image of $I_1$ under iterations of~\eqref{rescalesys} has a width of at most \(\delta^{-\frac{1}{4}} \left( 1-\frac{\nu}{8}\right)^{\frac{1}{8}\nu^{-1} \delta^{-1}} \).
Since the exponential limit \[ \lim_{\nu \to 0} \left(1-\tfrac{\nu}{8}\right)^{\frac{1}{8}\nu^{-1} \delta^{-1}}  =  \rme^{-\frac{1}{64}\delta^{-1}} \] is attained monotonically from below, we can bound this width via 
\[ \frac{1}{2}\delta^{-\frac{1}{4}}  \left(1-\tfrac{\nu}{8}\right)^{\frac{1}{8}\nu^{-1} \delta^{-1}} \leq \frac{1}{2}\delta^{-\frac{1}{4}} \rme^{-\frac{1}{64}\delta^{-1}}  =:\Delta(\delta) . \]

Hence, any trajectory starting in \( I_1\) goes through the set
$$ [x_{2, \min}^*- \Delta(\delta), x_{2, \max}^*+ \Delta(\delta) ] \times \left\{-\frac{1}{8}\delta^{-\frac{1}{2}}\right\},$$
where \(x_{2, \min}^*\) and \(x_{2, \max}^*\) denote the minimal and the maximal value, respectively, that the fixed point \(x_2^*(y_2)\) attains while \(y_2\) varies over \([-\frac{1}{4}\delta^{-\frac{1}{2}}, -\frac{1}{8}\delta^{-\frac{1}{2}}]\).
Since, according to Lemma~\ref{lem:curve_of_fp}, the curve of fixed points is increasing, we have \( x_{2, \min}^*= x_2^*(-\frac{1}{4}\delta^{-\frac{1}{2}})\) and \( x_{2, \max}^*= x_2^*(-\frac{1}{8}\delta^{-\frac{1}{2}})\).
 
It remains to verify that \(x_2^*(-\frac{1}{4}\delta^{-\frac{1}{2}}) \geq  2 \delta^{\frac{3}{4}}\lambda\rho \) and \( x_2^*(-\frac{1}{8}\delta^{-\frac{1}{2}}) \leq  8 \delta^{\frac{3}{4}}\lambda\rho \).
To that purpose, we plug these bounds into equation \eqref{fixedpoint}, satisfied by the fixed points, which gives
\[  (2 \delta^{\frac{3}{4}}\lambda\rho)^2 - \frac{\lambda\rho \delta^{\frac{1}{4}}}{2 \delta^{\frac{3}{4}}\lambda\rho}  =  (2 \delta^{\frac{3}{4}}\lambda\rho)^2  -\frac{1}{2}\delta^{-\frac{1}{2}} \leq -\frac{1}{4}\delta^{-\frac{1}{2}} ~, \]
where the last inequality holds for all sufficiently small \(\delta\), and 
\[  (8 \delta^{\frac{3}{4}}\lambda\rho)^2 - \frac{\lambda\rho \delta^{\frac{1}{4}}}{8 \delta^{\frac{3}{4}}\lambda\rho}  \geq  - \frac{\lambda\rho \delta^{\frac{1}{4}}}{8 \delta^{\frac{3}{4}}\lambda\rho}  = -\frac{1}{8}\delta^{-\frac{1}{2}} ~.\] 
Since the curve of fixed points is increasing (see Figure~\ref{fig:fixedpoints}), the claim follows.
\end{proof}

\begin{proposition}\label{prop:I2_to_I3}
The set \(I_2\) gets mapped by iterations of~\eqref{rescalesys}  into  the set \(I_3:= \left[ 0, \tilde{\lambda}^{\frac{1}{3}} \right] \times \{0 \}\). 
\end{proposition}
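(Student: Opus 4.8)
The plan is to follow the orbits of \eqref{rescalesys} from the line $\{y_2=-\tfrac18\delta^{-1/2}\}$ up to $\{y_2=0\}$ and to trap each orbit between $0$ and an upper barrier that grows with $y_2$ no faster than the curve of positive fixed points $y_2\mapsto x_2^*(y_2)$. Throughout this transition the slow variable takes values only in $[-\tfrac18\delta^{-1/2},0]\subset[-\tfrac14\delta^{-1/2},\tfrac14\delta^{-1/2}]$, so Lemmas~\ref{lem:curve_of_fp} and~\ref{lem:increasing_invariance} are available at every step: $f(\cdot,y_2)$ is monotonically increasing on $[-\tfrac12\delta^{-1/4}-1,\tfrac12\delta^{-1/4}+1]$, the interval $[0,\tfrac12\delta^{-1/4}+1]$ is positively invariant under it, $x_2^*$ is increasing, and $x_2^*(y_2)<\tfrac12\delta^{-1/4}+1$. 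I will also use two elementary smallness facts about $\delta$. First, the left endpoint $2\delta^{3/4}\lambda\rho-\Delta(\delta)$ of $I_2$ is nonnegative for $\delta$ small, because $\Delta(\delta)=\tfrac12\delta^{-1/4}\rme^{-\delta^{-1}/64}$ decays faster than any power of $\delta$. Second, writing $M:=8\delta^{3/4}\lambda\rho+\Delta(\delta)$ for the right endpoint of $I_2$, one has $M\le\tilde\lambda^{1/3}=(\lambda\rho)^{1/3}\delta^{1/12}$ for $\delta$ small, since $\tfrac34>\tfrac1{12}$ and $\Delta(\delta)$ is super-polynomially small; in particular $I_2\subset[0,\tfrac12\delta^{-1/4}+1]$.

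The main step is the claim, proved by induction on the iterate number $n$, that every orbit $(x_2^{(n)},y_2^{(n)})$ of \eqref{rescalesys} with $(x_2^{(0)},y_2^{(0)})\in I_2$ satisfies $0\le x_2^{(n)}\le\max\{x_2^*(y_2^{(n)}),M\}$ as long as $y_2^{(n)}\le 0$. The base case $n=0$ is the definition of $I_2$ together with the first smallness fact. For the inductive step, note that $\max\{x_2^*(y_2^{(n)}),M\}\le\tfrac12\delta^{-1/4}+1$ by Lemma~\ref{lem:curve_of_fp} and the second smallness fact, so $x_2^{(n)}\in[0,\tfrac12\delta^{-1/4}+1]$, whence $0\le x_2^{(n+1)}$ by positive invariance of that interval under $f(\cdot,y_2^{(n)})$. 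For the barrier I distinguish two cases. If $x_2^{(n)}\le x_2^*(y_2^{(n)})$, monotonicity of $f(\cdot,y_2^{(n)})$ gives $x_2^{(n+1)}=f(x_2^{(n)},y_2^{(n)})\le f(x_2^*(y_2^{(n)}),y_2^{(n)})=x_2^*(y_2^{(n)})\le x_2^*(y_2^{(n+1)})$, the last step since $x_2^*$ is increasing and $y_2^{(n+1)}=y_2^{(n)}+\nu\delta^{1/2}>y_2^{(n)}$. If instead $x_2^{(n)}>x_2^*(y_2^{(n)})$, then $x_2^{(n)}\le M$ by the induction hypothesis, and since $f(\cdot,y_2^{(n)})$ lies strictly below the diagonal to the right of its unique positive fixed point (established in the proof of Lemma~\ref{lem:increasing_invariance}) we get $x_2^{(n+1)}=f(x_2^{(n)},y_2^{(n)})<x_2^{(n)}\le M$. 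In both cases $x_2^{(n+1)}\le\max\{x_2^*(y_2^{(n+1)}),M\}$, closing the induction.

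To conclude, the transition from $\{y_2=-\tfrac18\delta^{-1/2}\}$ to $\{y_2=0\}$ takes $N=(8\nu\delta)^{-1}$ iterations, so $y_2^{(N)}=0$ and the claim at $n=N$ gives $0\le x_2^{(N)}\le\max\{x_2^*(0),M\}$. By \eqref{fixedpoint} the fixed point at height $0$ satisfies $(x_2^*(0))^3=\tilde\lambda$, i.e.\ $x_2^*(0)=\tilde\lambda^{1/3}$, and the second smallness fact gives $\max\{\tilde\lambda^{1/3},M\}=\tilde\lambda^{1/3}$. Hence $x_2^{(N)}\in[0,\tilde\lambda^{1/3}]$, so every point of $I_2$ is mapped by $N$ iterations of \eqref{rescalesys} into $I_3=[0,\tilde\lambda^{1/3}]\times\{0\}$.

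I do not expect a deep obstacle; the substance is keeping track of which of Lemmas~\ref{lem:curve_of_fp}--\ref{lem:increasing_invariance} applies at each step, plus the two smallness-of-$\delta$ inequalities. The one point requiring care is the case $x_2^{(n)}>x_2^*(y_2^{(n)})$: there the next iterate cannot be confined by $x_2^*$ via monotonicity, and one must instead invoke strict decrease below the diagonal together with the fixed finite barrier $M$ carried along in the induction hypothesis (which is why the barrier in the claim is $\max\{x_2^*,M\}$ rather than $x_2^*$ alone). One should also verify that the smallness conditions on $\delta$ and $\nu$ used here are compatible with, and can be absorbed into, those already fixed in Proposition~\ref{prop:I1_to_I2}.
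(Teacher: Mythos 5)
Your argument is correct, and it rests on the same two pillars as the paper's proof: that $I_2$ lies inside $[0,\tilde{\lambda}^{1/3}]$ for $\delta$ small (your two smallness facts) and that, for $y_2\in[-\tfrac18\delta^{-1/2},0)$, the $x_2$-dynamics cannot escape past the increasing curve of fixed points with $x_2^*(0)=\tilde{\lambda}^{1/3}$. The difference is how the confinement is established. The paper asserts, ``analogously to Lemma~\ref{lem:contraction}'', that $f(\cdot,y_2)$ is a contraction on $[0,\tilde{\lambda}^{1/3}]$ toward $x_2^*(y_2)$ and concludes that this fixed interval is invariant; you instead run an induction with the moving barrier $\max\{x_2^*(y_2^{(n)}),M\}$, using only monotonicity of $f(\cdot,y_2)$ (Lemma~\ref{lem:increasing_invariance}), monotonicity of $x_2^*$ (Lemma~\ref{lem:curve_of_fp}), and the below-diagonal property to the right of the unique positive fixed point. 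This buys you a more self-contained argument, since you never need the unproved contraction claim, at the cost of slightly heavier bookkeeping. In fact your bookkeeping can be trimmed: since $M\le\tilde{\lambda}^{1/3}$ and $x_2^*(y_2)\le x_2^*(0)=\tilde{\lambda}^{1/3}$ for all $y_2\le 0$, your barrier never exceeds $\tilde{\lambda}^{1/3}$, so the same case split (below the fixed point: pushed at most to $x_2^*(y_2)\le\tilde{\lambda}^{1/3}$; above it: strictly decreased) directly proves forward invariance of the fixed interval $[0,\tilde{\lambda}^{1/3}]$, recovering the paper's statement without any contraction estimate and without carrying $M$ through the induction. Your closing remark about compatibility of the smallness conditions is also in line with the paper, which collects all such thresholds into $\delta_0$, $\nu$ in Section~\ref{sec:blowdown}.
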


\begin{proof}
Recall that the fixed point \(x_2^*(0)\) is given by \(x_2^*(0) = \tilde{\lambda}^{\frac{1}{3}} = \left( \delta^{\frac{1}{4}} \lambda \rho \right)^{\frac{1}{3}} \). 
Furthermore, we have
 \[\left[ 2 \delta^{\frac{3}{4}}\lambda\rho - \Delta(\delta), 8 \delta^{\frac{3}{4}}\lambda\rho + \Delta(\delta) \right] \subset \left[0, \left( \delta^{\frac{1}{4}} \lambda \rho \right)^{\frac{1}{3}} \right] \] 
for all sufficiently small \(\delta\).
Analogously to Lemma \ref{lem:contraction}, we observe that, for all  \(y_2 \in [-\frac{1}{8}\delta^{-\frac{1}{2}}, 0)\), the map \(f(\cdot , y_2)\) is a contraction on \(\left[ 0, \tilde{\lambda}^{\frac{1}{3}} \right]\)  to the fixed point \(x_2^*(y_2)\).
Since \(x_2^*(y_2) \in \left[ 0, \tilde{\lambda}^{\frac{1}{3}} \right] \) for all \(y_2< 0 \), we can deduce that \(\left[ 0, \tilde{\lambda}^{\frac{1}{3}} \right]\) is invariant under \(f(\cdot , y_2)\) for all  \(y_2 \in \left[-\frac{1}{8}\delta^{-\frac{1}{2}}, 0\right)\). This implies the claim.
\end{proof}
The final transition is described in the following proposition.

\begin{proposition}\label{prop:I3_to_I4}
The set \(I_3\) gets mapped by iterations of \eqref{rescalesys} into the set \[ I_4 = \left[  \frac{1}{4} \min(\lambda \rho,2 ) \delta^{-\frac{1}{4}},  \frac{1}{2}\delta^{-\frac{1}{4}} + \mu \right] \times \left\{\frac{1}{4}\delta^{-\frac{1}{2}}\right\}\,,\] 
where \( \mu = \sqrt{2\lambda\rho}  \delta^{\frac{1}{4}} \).
\end{proposition}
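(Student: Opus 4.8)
\emph{Setup.} Since the second component of~\eqref{rescalesys} advances by $\nu\delta^{1/2}$ at every step, the set $I_3 = [0,\tilde\lambda^{1/3}]\times\{0\}$ is carried onto the fibre $\{y_2 = \tfrac14\delta^{-1/2}\}$ after exactly $N := \tfrac{1}{4\nu}\delta^{-1}$ iterations, with $y_2(n) = n\,\nu\delta^{1/2}$, so the whole content is the estimate on the $x_2$-coordinate: writing $x_2(n)$ for the orbit of an arbitrary $x_2(0)\in[0,\tilde\lambda^{1/3}]$, I have to show $x_2(N)\in\bigl[\tfrac14\min(\lambda\rho,2)\delta^{-1/4},\ \tfrac12\delta^{-1/4}+\mu\bigr]$. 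The plan is to sandwich $x_2(\cdot)$ between comparison sequences for the fibre maps $f(\cdot,y_2)$, using throughout the monotonicity step ``$a\le b\Rightarrow f(a,y_2)\le f(b,y_2)$'', which by Lemma~\ref{lem:increasing_invariance} is valid whenever $a,b\in[-\tfrac12\delta^{-1/4}-1,\tfrac12\delta^{-1/4}+1]$; in each step below this holds because of the trapping bound $0\le x_2(n)\le x_2^*(y_2(n))<\tfrac12\delta^{-1/4}+1$ supplied by Lemmas~\ref{lem:curve_of_fp} and~\ref{lem:increasing_invariance}.

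\emph{Upper bound.} The fixed-point curve $x_2^*$ of Lemma~\ref{lem:curve_of_fp} is a super-solution: by~\eqref{fixedpoint} one has $f(x_2^*(y_2),y_2)=x_2^*(y_2)$, and since $x_2^*$ is increasing this is $\le x_2^*(y_2+\nu\delta^{1/2})$. As the left endpoint of $I_3$ satisfies $x_2(0)\le\tilde\lambda^{1/3}=x_2^*(0)$, an induction using monotonicity of $f(\cdot,y_2)$ gives $x_2(n)\le x_2^*(y_2(n))$ for all $n\le N$, hence $x_2(N)\le x_2^*(\tfrac14\delta^{-1/2})$. To sharpen this to the right endpoint of $I_4$ I would insert $y_2=\tfrac14\delta^{-1/2}$ into~\eqref{fixedpoint}: from $x_2^*(\tfrac14\delta^{-1/2})^2=\tfrac14\delta^{-1/2}+\tilde\lambda/x_2^*(\tfrac14\delta^{-1/2})$ together with $\tfrac12\delta^{-1/4}<x_2^*(\tfrac14\delta^{-1/2})<\tfrac12\delta^{-1/4}+1$ (from $x_2^{*2}>y_2$ and Lemma~\ref{lem:curve_of_fp}) one obtains $x_2^*(\tfrac14\delta^{-1/2})-\tfrac12\delta^{-1/4}\le 2\tilde\lambda\delta^{1/2}=2\lambda\rho\,\delta^{3/4}\le\sqrt{2\lambda\rho}\,\delta^{1/4}=\mu$ for $\delta$ small.

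\emph{Lower bound.} Here I would use two explicit sub-solutions. The first is the linear one $\sigma_1(y_2):=\tilde\lambda\,y_2$: a direct computation with $f$ gives $f(\tilde\lambda y_2,y_2)-\tilde\lambda(y_2+\nu\delta^{1/2})=\nu\delta^{1/2}\,\tilde\lambda\,y_2^2\bigl(1-\tilde\lambda^2 y_2\bigr)$, which is nonnegative exactly while $\tilde\lambda^2 y_2\le1$, i.e.\ $y_2\le\tilde\lambda^{-2}$; on that range $\tilde\lambda y_2\le\sqrt{y_2}<x_2^*(y_2)$, so all comparison arguments stay in the monotonicity window. Since $x_2(0)\ge0=\sigma_1(0)$, induction gives $x_2(n)\ge\tilde\lambda\,y_2(n)$ for every $n$ with $y_2(n)\le\tilde\lambda^{-2}$. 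If $\lambda\rho\le2$ then $\tilde\lambda^{-2}=(\lambda\rho)^{-2}\delta^{-1/2}\ge\tfrac14\delta^{-1/2}$, so this covers the whole range and $x_2(N)\ge\tilde\lambda\cdot\tfrac14\delta^{-1/2}=\tfrac{\lambda\rho}{4}\delta^{-1/4}=\tfrac14\min(\lambda\rho,2)\delta^{-1/4}$, as desired. If $\lambda\rho>2$ then $\tilde\lambda^{-2}<\tfrac14\delta^{-1/2}$, and at the iterate $n^\dagger$ with $y_2(n^\dagger)=\tilde\lambda^{-2}$ the linear bound already yields $x_2(n^\dagger)\ge\tilde\lambda^{-1}=\sqrt{\tilde\lambda^{-2}}$; from this height I would switch to the square-root sub-solution $\sigma_2(y_2):=\sqrt{y_2}$. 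Indeed $f(\sqrt{y_2},y_2)=\sqrt{y_2}+\nu\delta^{1/2}\tilde\lambda$, while by concavity $\sqrt{y_2+\nu\delta^{1/2}}\le\sqrt{y_2}+\nu\delta^{1/2}/(2\sqrt{y_2})$, so $\sigma_2$ is a sub-solution as soon as $\tilde\lambda\ge1/(2\sqrt{y_2})$, i.e.\ $y_2\ge\tfrac14\tilde\lambda^{-2}$, which holds for all $y_2\in[\tilde\lambda^{-2},\tfrac14\delta^{-1/2}]$; and $\sqrt{y_2}<x_2^*(y_2)$ again keeps the orbit trapped. Induction from $n^\dagger$ then gives $x_2(N)\ge\sqrt{\tfrac14\delta^{-1/2}}=\tfrac12\delta^{-1/4}=\tfrac14\min(\lambda\rho,2)\delta^{-1/4}$. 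Together with the upper bound this shows $x_2(N)\in I_4$ for every starting point in $I_3$.

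\emph{Main obstacle.} The genuinely delicate step is the lower bound in the regime $\lambda\rho>2$: the linear sub-solution $\sigma_1$ stays strictly below $\sqrt{y_2}$ for $y_2<\tilde\lambda^{-2}$ and meets it only at $y_2=\tilde\lambda^{-2}$, so the handoff from $\sigma_1$ to $\sigma_2$ carries no slack. One therefore has to argue --- as the paper does elsewhere --- that the discretization rounding, i.e.\ the fact that $y_2(n)$ need not land exactly on $0$, $\tilde\lambda^{-2}$ or $\tfrac14\delta^{-1/2}$, produces only $\bigo{\nu\delta^{1/2}}$ errors, which are negligible compared with every quantity entering the estimates. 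A secondary, purely bookkeeping point is to check that all the ``$\delta$ sufficiently small'' requirements invoked above are compatible with the value of $\nu$, which was already fixed $\delta$-independently in Lemma~\ref{lem:increasing_invariance}.
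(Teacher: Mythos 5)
Your proposal is correct and is essentially the paper's argument in different clothing: your super-solution is the fixed-point curve $x_2^*(y_2)$ (the right boundary of the paper's invariant set $S_1$), and your two sub-solutions $x_2=\tilde{\lambda}y_2$ and $x_2=\sqrt{y_2}$ are exactly the lower boundaries of the paper's sets $S_3$ and $S_2$, with the same case distinction $\lambda\rho \lessgtr 2$ deciding which curve is binding at the exit height and the same endpoint estimate producing $\mu=\sqrt{2\lambda\rho}\,\delta^{1/4}$. The handoff you flag as delicate is in fact harmless (once $y_2(n)$ exceeds $\tilde{\lambda}^{-2}$ the linear bound, valid for one more step, already implies $x_2(n)\ge\tilde{\lambda}y_2(n)\ge\sqrt{y_2(n)}$), and the ``exact level'' simplification is the same convention the paper adopts in Section~\ref{sec:entering}.
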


\begin{proof} 

We first consider the set $S_1$ (see Figure~\ref{fig:Si}), defined by
\[S_1:= \left\{ (x_2,y_2) ~ \vert ~ 0 \leq x_2 \leq x_2^*(y_2), ~  0\leq y_2  \leq \frac{1}{4} \delta^{-\frac{1}{2}} \right\}\,.\]
\begin{figure}[ht]
	\begin{center}
		
		\begin{overpic}[width=0.30\textwidth]{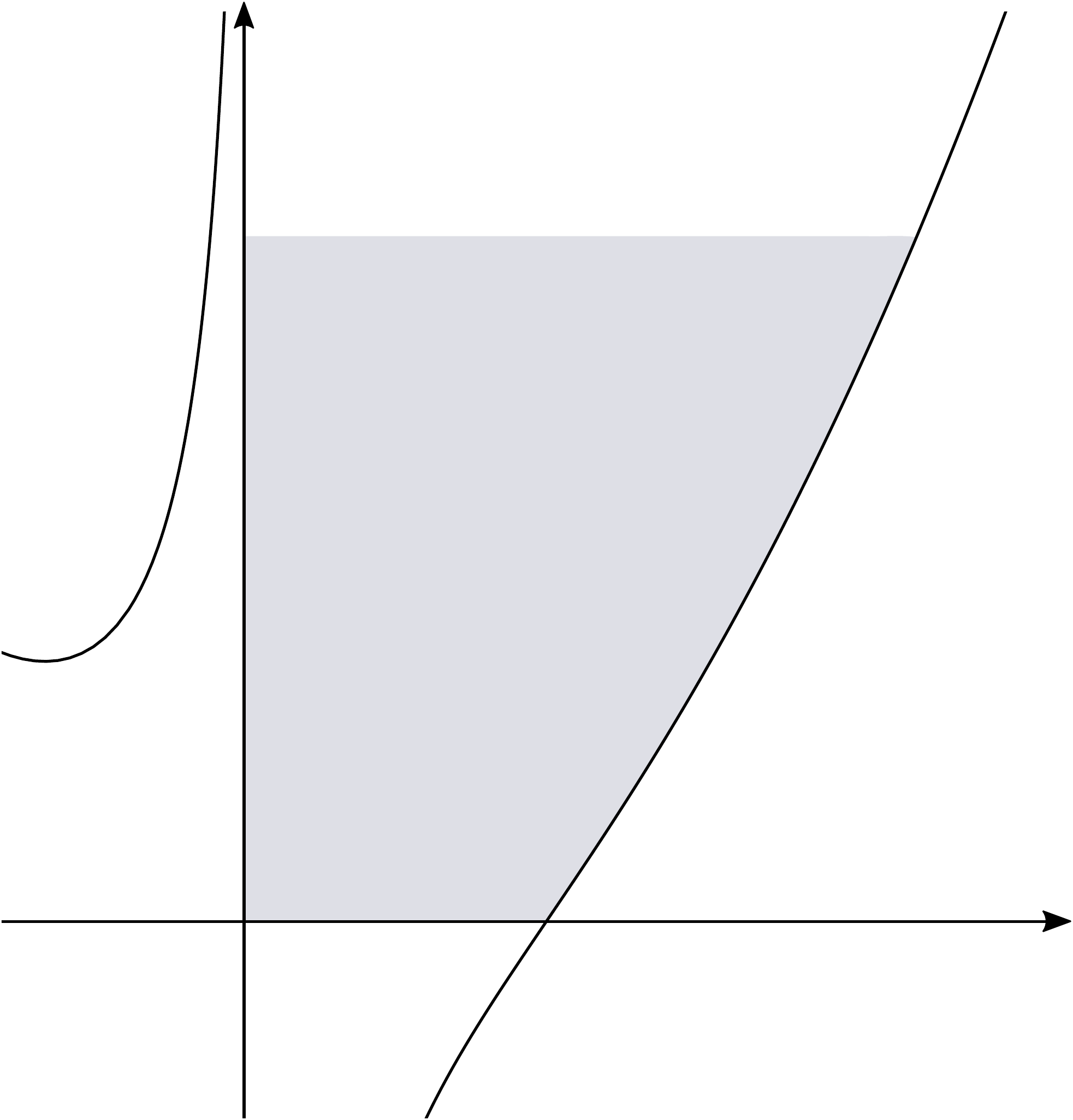}
			\put(50,106){\(S_1\)}
			\footnotesize
			\put(92,21){\(x_2\)}
			\put(24,98){\(y_2\)}
			
		\end{overpic}	
		\hspace{2cm}
		\begin{overpic}[width=0.30\textwidth]{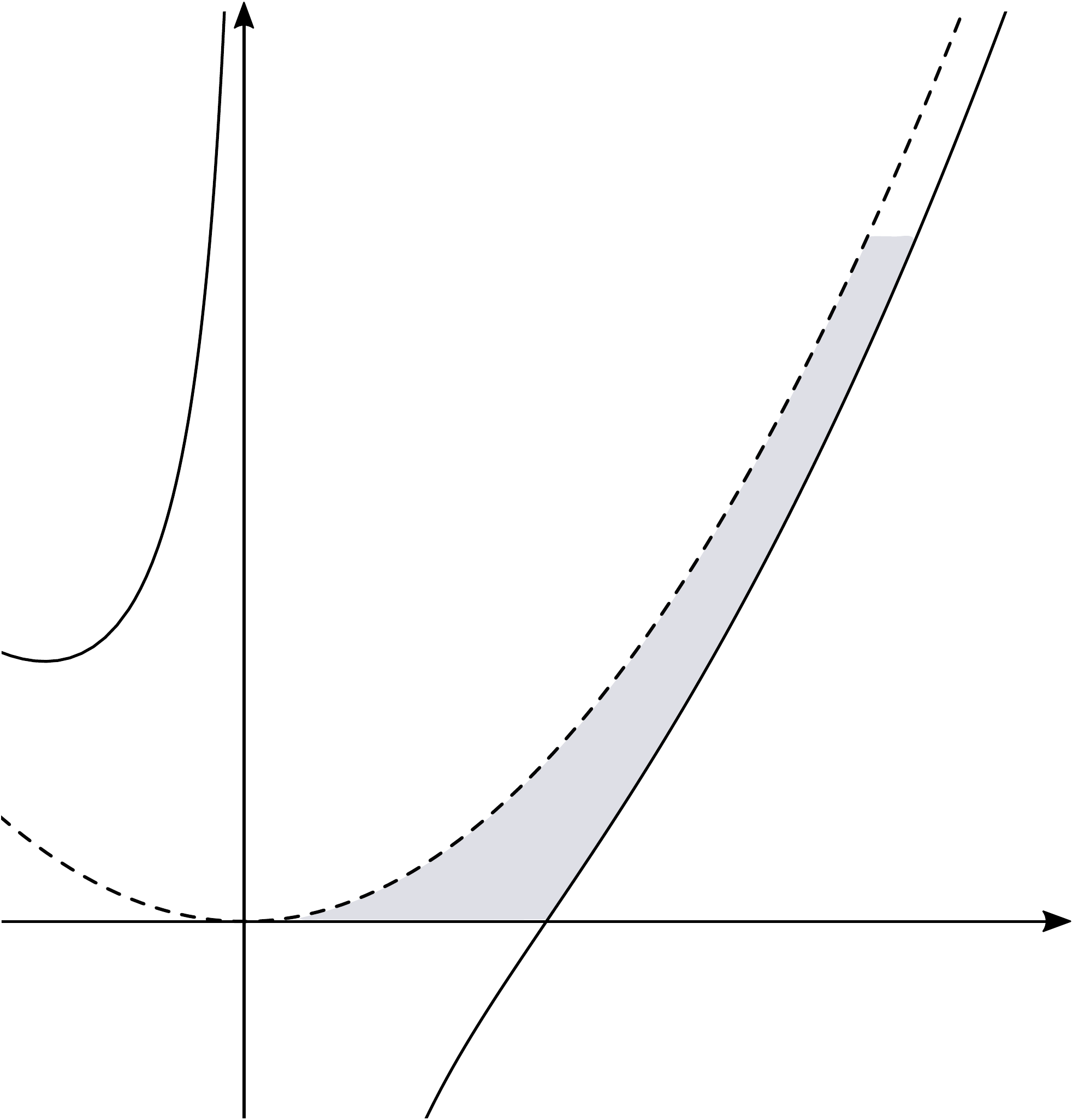}
			\put(50,106){\(S_2\)}
			\footnotesize
			\put(92,21){\(x_2\)}
			\put(24,98){\(y_2\)}
		\end{overpic}	
		\\ \vspace{1.5cm}
		\begin{overpic}[width=0.30\textwidth]{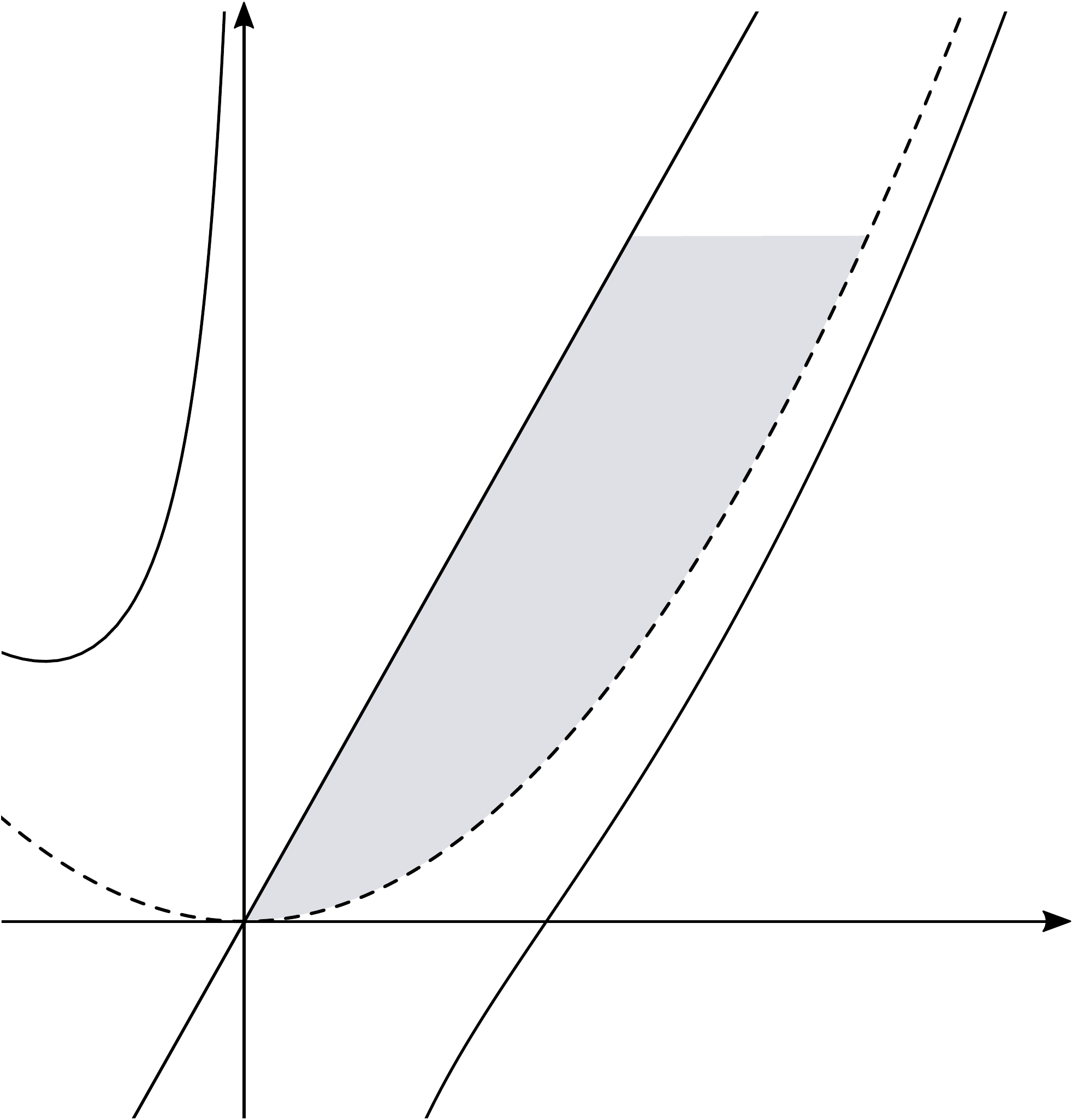}
			\put(40,106){\(S_3 ~ ( \text{for} ~ \lambda \rho < 2)\)}
			\footnotesize
			\put(92,21){\(x_2\)}
			\put(24,98){\(y_2\)}
		\end{overpic}
		\hspace{2cm}
		\begin{overpic}[width=0.30\textwidth]{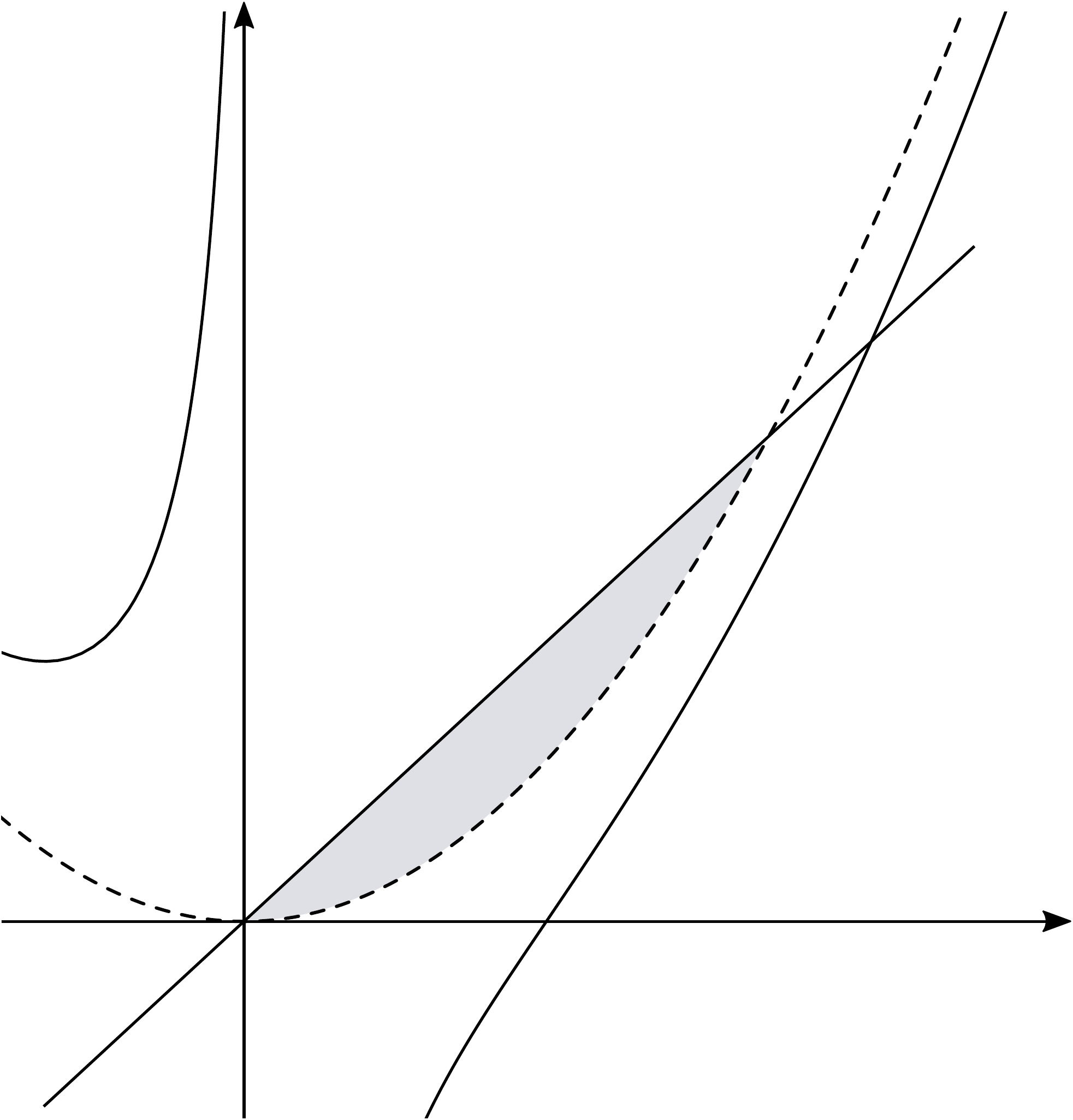}
			\put(40,106){\(S_3 ~ (\text{for} ~ \lambda \rho \geq 2)\)}
			\footnotesize
			\put(92,21){\(x_2\)}
			\put(24,98){\(y_2\)}
		\end{overpic}
		
		\caption{The sets \(S_1, S_2\) and \(S_3\) in the \((x_2,y_2)\)-plane. The shape of \(S_3\) depends on the size of \(\lambda\rho\).}

		\label{fig:Si}
	\end{center}
\end{figure}
Let \((x_{2,0}, y_{2,0})\) be in \(S_1\).  Now, since \(f(\cdot, y_{2,0})\) is increasing and also the curve of fixed points is increasing, we have 
\[\bar{x}_{2,0} = f(x_{2,0},y_{2,0}) \leq x_2^*(y_{2,0}) \leq x_2^*(y_{2,0} + \nu \delta^{\frac{1}{2}} ) = x_2^*(\bar{y}_{2,0})\,.\]
Hence, the point \((\bar{x}_{2,0}, \bar{y}_{2,0})\) lies in \(S_1\), as long as $y_{2,0} \leq \frac{1}{4} \delta^{-\frac{1}{2}} - \nu \delta^{\frac{1}{2}}$.
This means that trajectories leave \(S_1\) through the upper boundary at \(y_2 = \frac{1}{4} \delta^{-\frac{1}{2}}\).

Moreover, we consider the sets 
\[S_2 := \{ (x_2,y_2) ~ \vert ~ y_2 \leq x_2^2  \}  \cap S_1\,, \quad S_3 := \{  (x_2,y_2) ~ \vert ~ x_2^2 < y_2 \leq \tilde{\lambda}^{-1} x_2   \} \cap S_1 \,, \]
as depicted in Figure~\ref{fig:Si}. 
For points \((x_2,y_2) \in S_3\), we have \(\bar{x}_2 = f(x_2,y_2) > x_2 +  \nu \delta^{\frac{3}{4}} \lambda \rho = x_2 + \nu \delta^{\frac{1}{2}} \tilde{\lambda} \) as well as \(\bar{y}_2 = y_2 + \nu \delta^{\frac{1}{2}}\). Thus they will be mapped into \(S_2 \cup S_3\), as long as $y_{2} \leq \frac{1}{4} \delta^{-\frac{1}{2}} - \nu \delta^{\frac{1}{2}}$. Points on the left-hand boundary of \(S_2\), given by \(\{y_2=x_2^2\}\),  clearly satisfy the same estimate such that they are also mapped into \(S_2 \cup S_3\). By monotonicity of \(f(\cdot, y_2)\), see Lemma~\ref{lem:curve_of_fp}, this holds for all points in \(S_2\).
Hence, we deduce that trajectories leave \(S_2 \cup  S_3\) through the upper boundary at \(y_2 = \frac{1}{4} \delta^{-\frac{1}{2}}\).

In order to find a right-hand bound for the set $S_1$, we
compute 
\[ \left(\frac{1}{2}\delta^{-\frac{1}{4}} + \mu \right)^2 - \tilde{\lambda} \left(\frac{1}{2}\delta^{-\frac{1}{4}} + \mu\right)^{-1} \geq
\frac{1}{4}\delta^{-\frac{1}{2}}
+ \mu^2 - 2\tilde{\lambda}\delta^{\frac{1}{4}} = \frac{1}{4}\delta^{-\frac{1}{2}},
\]
where we used that \( \mu = \sqrt{2\tilde{\lambda}} ~ \delta^{\frac{1}{8}} = \sqrt{2\lambda\rho} ~ \delta^{\frac{1}{4}} \).
Since the curve of fixed points is increasing (see Figure~\ref{fig:fixedpoints}), this means that we can estimate  \(x_2^*(\frac{1}{4}\delta^{-\frac{1}{2}}) \leq \frac{1}{2}\delta^{-\frac{1}{4}} + \mu\). We observe from the proof of Proposition~\ref{prop:I2_to_I3} that
\(\left( 0, \tilde{\lambda}^{\frac{1}{3}} \right) \times \{0 \}\) is a subset of \(S_2 \cup S_3\). Furthermore, it is now easy to see that
\[(S_2 \cup S_3 ) \cap \left(\R \times \left\{ \frac{1}{4} \delta^{-\frac{1}{2}} \right\} \right) \subseteq
 \left[  \frac{1}{4}\lambda \rho \delta^{-\frac{1}{4}},  \frac{1}{2}\delta^{-\frac{1}{4}} + \mu \right] \times \left\{ \frac{1}{4}\delta^{-\frac{1}{2}}\right\}\,, \ \text{ when } \lambda \rho < 2\,, \] 
and 
\[(S_2 \cup S_3 ) \cap \R \times \left\{ \frac{1}{4} \delta^{-\frac{1}{2}} \right\}  \subseteq
\left[  \frac{1}{2} \delta^{-\frac{1}{4}},  \frac{1}{2}\delta^{-\frac{1}{4}} + \mu \right] \times \left\{\frac{1}{4}\delta^{-\frac{1}{2}}\right\}\,, \ \text{ when } \lambda \rho \geq 2\,.\]
This concludes the proof.
\end{proof} 
Theorem \ref{mainresult} is now an immediate consequence of combining 	\Cref{prop:I1_to_I2}, \Cref{prop:I2_to_I3} and \Cref{prop:I3_to_I4}.

\subsection{Dynamics in the exiting chart}
\label{sec:exiting}

Transforming \(\Sigma_2^{\txtout}\) to the coordinates of the third chart gives 
\[\kappa_{23}(\Sigma_2^{\txtout}) =  \left\{x_3 \in \left[  \min(\tfrac{1}{2}\lambda \rho,1 ), 1+ 2\mu \delta^{\frac{1}{4}} \right]~,~~r_3 = \tfrac{\rho}{2}~,~~\epsilon_3 = 16\delta~,~~h_3 = \tfrac{\nu}{4}\right\} ~.\] 
Furthermore, we define 
\begin{equation} \label{Sigma3in}
\Sigma_3^{\txtin} := \left\{x_3 \in \left[ 1-\theta, 1+\theta \right]~,~~r_3 = \tfrac{\rho}{2}~,~~\epsilon_3 = 16\delta~,~~h_3 = \tfrac{\nu}{4}\right\} ~, 
\end{equation}
with \(\theta = \max \{1-\tfrac{\lambda \rho}{2}, \tfrac{1}{2}\}\), such that 
\(\kappa_{23}(\Sigma_2^{\txtout}) \subset \Sigma_3^{\txtin}\) for sufficiently small \(\delta\).
As already noted in section \ref{sec:entering}, we may assume, due to the controllably small error, that specific levels are hit by the trajectories and therefore, in particular, we assume \(\rho = \tilde{\rho}\), where \(\tilde{\rho} \) was used to define \(\Delta_{\textnormal{out}}^+\) \eqref{Deltaout}. 

Similarly to the situation in \(K_1\), we consider the domain 
$$D_3 := \{(x_3, r_3, \epsilon_3, h_3) \in \mathbb{R}^4 : r_3 \in 
[ 0, \rho], \epsilon_3 \in [0, 16 \delta], h_3 \in [ 0, \nu] \}$$ 
for the chart \(K_3\), and we obtain the map
\begin{align} \label{K3_dynamics}
\begin{array}{rcrcl}
&\bar{x}_3& &=&(1+h_3\epsilon_3)^{-\frac{1}{2}} \left[ x_3+ h_3\left(x_3(1-x_3^2)+\lambda r_3\epsilon_3 \right) \right]~, \\
&\bar{r}_3& &=& (1+h_3\epsilon_3)^{\frac{1}{2}}~ r_3~,\\
&\bar{\epsilon}_3& &=& (1+h_3\epsilon_3)^{-2} \epsilon_3~,\\
&\bar{h}_3& &=&(1+h_3\epsilon_3)~ h_3~,
\end{array}
\end{align}
corresponding with~\eqref{discrete}.
Similarly to the system obtained in $K_1$, the special structure of \eqref{K3_dynamics} again allows to explicitly determine solutions of the induced dynamical system in the components $r_3, \epsilon_3$ and $h_3$. 

\begin{lemma}
\label{r3h3eps3}
	For $\zeta_0 := \frac{1}{h_3(0)\epsilon_3(0)} > 0$, the trajectories of~\eqref{K3_dynamics} in $r_3, \epsilon_3, h_3$ are given by
	\begin{equation*} 
	h_3(n) = h_3(0) \frac{\zeta_0+n}{\zeta_0}~,~~ r_3(n) = r_3(0) \sqrt{ \frac{\zeta_0+n}{\zeta_0}} ~, ~~ \epsilon_3(n) = \epsilon_3(0) \frac{\zeta_0^2} {(\zeta_0+n)^2}~,
	\end{equation*}
	for $n\in \mathbb{N}$. 
\end{lemma}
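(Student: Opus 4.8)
The plan is to mirror exactly the argument used in the proof of Lemma~\ref{r1h1eps1}, exploiting the fact that the last three equations of~\eqref{K3_dynamics} decouple from $x_3$. First I would introduce the auxiliary quantity $\zeta := 1/(h_3 \epsilon_3)$, equivalently $\eta := h_3 \epsilon_3 = \zeta^{-1}$. Multiplying the $\bar h_3$- and $\bar\epsilon_3$-equations of~\eqref{K3_dynamics} gives
\[
\bar\eta = \bar h_3 \bar\epsilon_3 = (1+h_3\epsilon_3)\,h_3 \cdot (1+h_3\epsilon_3)^{-2}\epsilon_3 = (1+\eta)^{-1}\eta,
\]
so $\eta$ satisfies the scalar recursion $\bar\eta = \eta/(1+\eta)$. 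With initial value $\eta(0) = \zeta_0^{-1}$ one checks by induction (or by noting that $1/\eta$ satisfies $\overline{1/\eta} = 1/\eta + 1$) that $\eta(n) = 1/(\zeta_0 + n)$, i.e.\ $\zeta(n) = \zeta_0 + n$, for all $n \in \mathbb{N}$.

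Next I would recover $h_3(n)$ by telescoping the product of the linear factors: from $\bar h_3 = (1+h_3\epsilon_3)h_3 = (1+\eta)h_3$ and $(1+\eta(k)) = \eta(k)/\eta(k+1)$ we get
\[
h_3(n) = h_3(0)\prod_{k=0}^{n-1}(1+\eta(k)) = h_3(0)\prod_{k=0}^{n-1}\frac{\eta(k)}{\eta(k+1)} = h_3(0)\,\frac{\eta(0)}{\eta(n)} = h_3(0)\,\frac{\zeta_0+n}{\zeta_0}.
\]
The same telescoping applied to $\bar r_3 = (1+\eta)^{1/2} r_3$ yields $r_3(n) = r_3(0)\sqrt{(\zeta_0+n)/\zeta_0}$, and applied to $\bar\epsilon_3 = (1+\eta)^{-2}\epsilon_3$ yields $\epsilon_3(n) = \epsilon_3(0)\,\zeta_0^2/(\zeta_0+n)^2$. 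Alternatively, once $h_3(n)$ is known one obtains $\epsilon_3(n) = \eta(n)/h_3(n)$ directly. This completes the proof.

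There is essentially no obstacle here: unlike the $K_1$ case, where the sign in $(1-h_1\epsilon_1)$ forces the restriction $n < \xi_0$ and the trajectory terminates in finite time, in $K_3$ the factor $(1+h_3\epsilon_3)$ is always positive, so all three sequences are defined and smooth for every $n \in \mathbb{N}$ and simply grow (for $h_3, r_3$) or decay (for $\epsilon_3$) monotonically. The only mild care needed is to record that $\eta(n) \to 0$ as $n \to \infty$, which guarantees the linear factors $(1+\eta(k))$ stay bounded and the telescoping products converge to the stated closed forms; this is immediate from $\eta(n) = 1/(\zeta_0+n)$.
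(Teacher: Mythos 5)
Your proposal is correct and follows essentially the same route as the paper's own proof: introduce $\vartheta = h_3\epsilon_3$, derive the scalar recursion $\bar\vartheta = (1+\vartheta)^{-1}\vartheta$ with solution $\vartheta(n) = 1/(\zeta_0+n)$, and telescope the products $\prod_k(1+\vartheta(k)) = \vartheta(0)/\vartheta(n)$ to obtain the closed forms for $h_3$, $r_3$, $\epsilon_3$. Your added remark that no restriction analogous to $n<\xi_0$ is needed here, since $1+h_3\epsilon_3>0$ always, is a correct and worthwhile observation.
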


\begin{proof}
	Let \(\vartheta := h_3 \epsilon_3 \). Multiplying both sides of the equations for \( \epsilon_3 \) and \(h_3\)  in \eqref{K3_dynamics} yields \[ \bar{\vartheta} = (1+\vartheta)^{-1} \vartheta ~. \] Solving this recursion for some initial condition \(  \vartheta(0)= \zeta_0^{-1} >0\) gives \[ \vartheta(n) =\frac{1}{\zeta_0 + n} ~. \]
	We use this observation to calculate \[h_3(n) = h_3(0) \prod_{k=0}^{n-1} (1+\theta(k)) =h_3(0) \prod_{k=0}^{n-1} \frac{\theta(k)}{\theta(k+1)} = h_3(0) \frac{\theta(0)}{\theta(n)} =h_3(0) \frac{\zeta_0+n}{\zeta_0},\]
	and analogously 
	\begin{equation*} 
	r_3(n) = r_3(0) \sqrt{ \frac{\zeta_0+n}{\zeta_0}} \text{~~~~ as well as ~~~} \epsilon_3(n) = \epsilon_3(0) \frac{\zeta_0^2} {(\zeta_0+n)^2}~.
	\end{equation*}
	This finishes the proof.
\end{proof}

We observe from \eqref{K3_dynamics} that the hyperplane \(\{\epsilon_3 = 0\}\) is an  invariant set for the system \eqref{K3_dynamics}, foliating into the invariant lines \( \{\epsilon_3=0, r_3=r_3^*, h_3 = h_3^*\}\)  for all $r_3^*, h_3^* \geq 0$. 
Each of these lines has three fixed points, located at \((x_3, r_3, \epsilon_3, h_3)= (s, r_3^*, 0, h_3^*)\), \( s \in \{-1, 0, 1\} \).
Linearizing  around each of these, for $h_3^* > 0$, we see that in \(x_3\)-direction the fixed point at \(x_3 = 0 \) is unstable with eigenvalue \(1+h_3^*\) while those at  \(x_3 = -1 \) and \(x_3 = 1\) are stable with eigenvalue \(1-2h_3^*\) (recall that $h_3^* \leq h \rho^2 < \frac{1}{3} <1$).

Since, in our considerations, we enter $K_3$ via $\Sigma_3^{\txtin}$, our main interest lies in the family of stable fixed points at \((x_3, r_3, \epsilon_3, h_3)= (1, r_3^*, 0, h_3^*)\),  corresponding with the branch \(S_a^+\) of the critical manifold for $r_3^* > 0$.
Each of these fixed points has a three-dimensional center eigenspace as well as a one-dimensional stable eigenspace in \(x_3\)-direction with eigenvalue \(1-2h_3^*\). The union of these fixed points forms an invariant manifold, which we call
$$ S_{\txta,3}^{+} = \{(x_3, r_3, \epsilon_3, h_3) \in D_3 \,:\, x_3 = 1,\epsilon_3 = 0. \}$$
In particular, for each $h_3 \geq 0$ it contains the fixed point
$$ p_a^+(h_3) = (1,0,0, h_3)\,,$$
which has gained hyperbolicity due to the desingularization of the origin. In analogy to Proposition~\ref{centermanifold_K1} we get the following:

\begin{proposition} 
\label{centermanifold_K3}
The invariant manifold $S_{\txta,3}^{+}$ extends to a center-stable invariant manifold \(M_{\txta,3}^+\) (at $p_a^+(0)$) which is given in $D_3$ by a graph $x_3 = l_3(r_3, \epsilon_3, h_3)$ for a smooth mapping $l_3$. Furthermore, $\epsilon_3$ is decreasing in $D_3$ (and thereby, in particular, on $M_{\txta,3}^{+}$), whereas $h_3, r_3$ are increasing in $D_3$ (and thereby on $M_{\txta,3}^{+}$).
\end{proposition}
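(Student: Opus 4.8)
The plan is to mirror the proof of Proposition \ref{centermanifold_K1} verbatim, exploiting the fact that the structure of \eqref{K3_dynamics} is the same as that of \eqref{K1_dynamics} up to the sign change $h_1\epsilon_1 \mapsto -h_3\epsilon_3$ and the relocation of the relevant fixed point from $x_1=0$ to $x_3=1$. First I would recall from the discussion preceding the proposition that for each fixed $(r_3^*,h_3^*)$ with $h_3^*>0$ the line $\{\epsilon_3=0, r_3=r_3^*, h_3=h_3^*\}$ is invariant and carries the fixed point $(1,r_3^*,0,h_3^*)$, whose linearization in the $x_3$-direction has eigenvalue $1-2h_3^*$ with $|1-2h_3^*|<1$ (since $h_3^*\le h\rho^2<\tfrac13$), and whose three remaining directions are neutral (eigenvalue $1$). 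Thus $S_{\txta,3}^{+}$ is a normally hyperbolic (in fact purely center-stable, with the stable direction only along $x_3$) manifold of fixed points. Classical center manifold theory — the same reference \cite[Chapter 5A]{HPS77} used for Proposition \ref{centermanifold_K1} — then yields a local center-stable manifold $M_{\txta,3}^+$ at $p_a^+(0)=(1,0,0,0)$; since the $(r_3,\epsilon_3,h_3)$-dynamics are independent of $x_3$ in the first three components and the contraction in $x_3$ is uniform on $D_3$, this manifold is a graph $x_3=l_3(r_3,\epsilon_3,h_3)$ over $D_3$, with $l_3$ smooth (indeed $C^k$ for the relevant $k$, the map being polynomial). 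The smoothness and graph representation follow exactly as in \Cref{centermanifold_K1}.

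For the monotonicity statements I would simply read them off Lemma \ref{r3h3eps3}: the explicit solutions give $h_3(n)=h_3(0)\tfrac{\zeta_0+n}{\zeta_0}$, $r_3(n)=r_3(0)\sqrt{\tfrac{\zeta_0+n}{\zeta_0}}$ and $\epsilon_3(n)=\epsilon_3(0)\tfrac{\zeta_0^2}{(\zeta_0+n)^2}$, so that $h_3$ and $r_3$ are (strictly, when positive) increasing in $n$ while $\epsilon_3$ is decreasing in $n$. This holds throughout $D_3$ and in particular on the invariant manifold $M_{\txta,3}^+$, which is what is asserted. Note the contrast with chart $K_1$: there the factor was $(1-h_1\epsilon_1)$, here it is $(1+h_3\epsilon_3)$, which reverses the direction of monotonicity of $r$ and $h$ — consistent with the fact that in $K_3$ we are moving \emph{away} from the blown-up singularity along $S_a^+$ rather than \emph{toward} it as in $K_1$.

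The only point requiring a little care — and hence the main (mild) obstacle — is making sure that the center-stable manifold obtained from the abstract theory is genuinely defined on all of $D_3$, i.e. that no trajectory starting near $M_{\txta,3}^+$ leaves $D_3$ before one wishes to apply the result. Here, however, there is no blow-up in finite time to worry about: unlike in $K_1$, where one needed $n<\xi_0$ to keep $r_1,h_1$ positive, the $K_3$-solutions in Lemma \ref{r3h3eps3} are defined and bounded for all $n\in\mathbb{N}$ as long as they remain in the prescribed ranges $r_3\in[0,\rho]$, $\epsilon_3\in[0,16\delta]$, $h_3\in[0,\nu]$; since $r_3$ and $h_3$ grow only like $\sqrt{n}$ and $n$ respectively and $\epsilon_3$ decays, these bounds are respected for the number of iterates relevant to the transition in chart $K_3$ (to be made precise in the next section), and for $\delta,\nu$ small the contraction in $x_3$ with rate $1-2h_3$ pins trajectories to a neighbourhood of $M_{\txta,3}^+$. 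This is entirely analogous to the reasoning at the end of the proof of Proposition \ref{centermanifold_K1}, so I would conclude by invoking that parallel.
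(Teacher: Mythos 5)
Your proposal is correct and follows essentially the same route as the paper: the paper's proof is a two-line appeal to the preceding linearization analysis plus classical center manifold theory for the graph $x_3 = l_3(r_3,\epsilon_3,h_3)$, and to the explicit formulas of Lemma~\ref{r3h3eps3} for the monotonicity of $r_3,\epsilon_3,h_3$, exactly as you do. Your additional remarks on the eigenvalue $1-2h_3^*$, the sign flip $1-h_1\epsilon_1 \mapsto 1+h_3\epsilon_3$, and the absence of a finite-iteration restriction (no analogue of $n<\xi_0$) are accurate elaborations of details the paper leaves implicit.
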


\begin{proof}
This is an immediate consequence of the considerations above and classical center manifold theory. Similarly to the proof of Proposition~\ref{centermanifold_K1}, the second claim follows from Lemma~\ref{r3h3eps3}.
\end{proof}

Note that, on $\{r_3 > 0, \epsilon_3 > 0, h_3 > 0 \}$, the manifold \(M_{a,3}^+\) corresponds to the union of the slow manifolds \(S_{a, \epsilon, h}^+\). 

We will follow the iterations of~\eqref{K3_dynamics} until the values \(r_3 = \rho, \epsilon_3 = \delta, h_3 = \nu \) are reached. Lemma \ref{r3h3eps3} tells us that this is the case when \(\frac{\zeta_0+n^*}{\zeta_0} = 4\), which means that the number of iterations equals \(n^* = 3 \zeta_0 =\frac{3}{4 \nu \delta} \).
Let \(\Pi_3: \Sigma_3^{\txtin} \to \R^4\) be the transition map induced by \(n^* \) iterations of \eqref{K3_dynamics} and consider the set \[\Sigma_3^{\txtout} := \left\{  x_3 \in 1+ \rho^{-1} J ,  r_3= \rho, \epsilon_3 = \delta, h_3 = \nu \right\}.\]
Note that with this choice we have \(K_3(\Sigma_3^{\txtout}) =  \Delta_{\textnormal{out}}^+\). 
The following Lemma summarizes the properties of the transition map~\(\Pi_3\).

\begin{lemma} \label{lem:transition3}
For $\delta > 0$ sufficiently small, we have $\Pi_3(\Sigma_3^{\textnormal{in}}) \subset \Sigma_3^{\textnormal{out}} $, where \(\Pi_3(\Sigma_3^{\textnormal{in}})\) has a \(x_3\)-width of at most \(	\left(1-C^*\frac{\nu}{4}\right)^{\frac{3}{4 \nu \delta}} \)  for some constant \(C^*>0\). Furthermore, the intersections \(M_{\txta, 3}^+ \cap \Sigma_3^{\textnormal{in}}\) as well as \(M_{\txta, 3}^+ \cap \Pi_3(\Sigma_3^{\textnormal{in}})\) are non-empty sets.
\end{lemma}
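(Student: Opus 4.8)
\textbf{Proof plan for Lemma~\ref{lem:transition3}.}
The strategy mirrors the analysis of the entering chart in Lemma~\ref{lem:transition1}, but now the hyperbolic direction is \emph{attracting} towards the fixed point at \(x_3=1\) rather than at \(x_3=0\), and the auxiliary variables \(r_3,\epsilon_3,h_3\) \emph{grow} (resp.\ shrink) rather than the other way round. First I would invoke Lemma~\ref{r3h3eps3} to confirm that after exactly \(n^*=\tfrac{3}{4\nu\delta}\) iterations the trajectory starting at \((x_3,\tfrac{\rho}{2},16\delta,\tfrac{\nu}{4})\) reaches \(r_3=\rho,\epsilon_3=\delta,h_3=\nu\), so that the \(r_3,\epsilon_3,h_3\)-components of \(\Pi_3(\Sigma_3^{\textnormal{in}})\) are exactly those prescribed in \(\Sigma_3^{\textnormal{out}}\); the only thing left to control is the \(x_3\)-width. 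Throughout these iterations one has \(h_3(n)\in[\tfrac{\nu}{4},\nu]\) and \(\epsilon_3(n)\in[\delta,16\delta]\), by monotonicity from Lemma~\ref{r3h3eps3}.

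Next I would study the scalar map \(x_3\mapsto \bar x_3\) from \eqref{K3_dynamics} near \(x_3=1\). At \(\epsilon_3=0\) it reduces to \(\bar x_3 = x_3 + h_3 x_3(1-x_3^2) = x_3 + h_3 x_3 (1-x_3)(1+x_3)\), which has \(x_3=1\) as a fixed point with multiplier \(1-2h_3\in(\tfrac13,1)\); moreover on an interval of the form \([1-\theta,1+\theta]\) with \(\theta\) as in \eqref{Sigma3in} one checks by a direct computation (analogous to the cone estimate in Figure~\ref{fig:convergence:cone}) that \(|\bar x_3 - 1|\le (1-ch_3)|x_3-1|\) for some fixed \(c>0\), so the interval \([1-\theta,1+\theta]\) is forward-invariant and \(f(\cdot,y_3)\) contracts towards \(1\) at least at the linear rate \(1-ch_3\). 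Turning on \(\epsilon_3>0\) is a regular perturbation of size \(\bigo{\delta}\), bounded uniformly in \(n\le n^*\) because \(\epsilon_3(n)\le 16\delta\) and \(r_3(n)\le\rho\); so for \(\delta\) small enough the perturbed map still maps \([1-\theta,1+\theta]\) into itself and still contracts, now with a slightly weaker rate \(1-C^*\tfrac{h_3}{4}\ge 1-C^*\tfrac{\nu}{4}\) for some \(C^*>0\) (using \(h_3\ge\tfrac{\nu}{4}\)). Composing over the \(n^*\) steps gives that the \(x_3\)-width of the image contracts by a factor at most \(\bigl(1-C^*\tfrac{\nu}{4}\bigr)^{3/(4\nu\delta)}\); choosing \(\delta\) small makes this smaller than \(\rho^{-1}|J|\), so \(\Pi_3(\Sigma_3^{\textnormal{in}})\subset\Sigma_3^{\textnormal{out}}\).

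Finally, for the statement about \(M_{\txta,3}^+\): by Proposition~\ref{centermanifold_K3} the center-stable manifold \(M_{\txta,3}^+\) is the graph \(x_3=l_3(r_3,\epsilon_3,h_3)\), and as \(\delta\to0\) the point \(M_{\txta,3}^+\cap\{r_3=\tfrac{\rho}{2},\epsilon_3=16\delta,h_3=\tfrac{\nu}{4}\}\) converges to \((1,\tfrac{\rho}{2},0,\tfrac{\nu}{4})\), hence lies in \(\Sigma_3^{\textnormal{in}}\) (whose \(x_3\)-interval \([1-\theta,1+\theta]\) contains a neighbourhood of \(1\)) for \(\delta\) small; invariance of \(M_{\txta,3}^+\) under \eqref{K3_dynamics} then forces its image under \(\Pi_3\) to lie in \(M_{\txta,3}^+\), so \(M_{\txta,3}^+\cap\Pi_3(\Sigma_3^{\textnormal{in}})\) is also non-empty.

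The main obstacle is the second step: one must make the persistence of the forward-invariant interval and of the uniform contraction rate \(1-C^*\tfrac{\nu}{4}\) robust against the \(\epsilon_3\)-perturbation \emph{uniformly over all \(n\le n^*\)}, even though \(n^*\sim(\nu\delta)^{-1}\) diverges as \(\delta\to0\). This is exactly the kind of estimate handled in \cite{EngelKuehn18} and in Lemma~\ref{lem:transition1}: the key point is that the perturbation coefficient \(r_3(n)\epsilon_3(n)\) stays \(\bigo{\delta}\) while the contraction gap \(C^*h_3(n)\) stays bounded \emph{below} by a constant multiple of \(\nu\) independent of \(\delta\), so the accumulated distortion is \(\bigo{\delta/\nu}\) over the whole orbit and is absorbed by shrinking \(\delta\); standard perturbation arguments for hyperbolic fixed points then close the estimate.
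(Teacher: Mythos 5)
Your plan is correct and follows essentially the same route as the paper: explicit $r_3,\epsilon_3,h_3$ dynamics from Lemma~\ref{r3h3eps3}, forward invariance of $[1-\theta,1+\theta]$ and a per-step contraction $1-C^*h_3$ toward the hyperbolic fixed point $x_3=1$ for the $\epsilon_3=0$ cubic (the paper derives $C^*(\theta)=(1-\theta)(2-\theta)$ via concavity, i.e.\ the tangent at $1$ and the secant over $[1-\theta,1]$; your asserted ``direct computation'' works too, e.g.\ via $g(x_3,h_3)-1=(x_3-1)\bigl(1-h_3x_3(1+x_3)\bigr)$), a perturbation argument for $\epsilon_3>0$ uniform over the $n^*\sim(\nu\delta)^{-1}$ iterations, and the center-stable manifold for the non-emptiness claims. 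Only note the inequality $1-C^*\tfrac{h_3}{4}\ge 1-C^*\tfrac{\nu}{4}$ is written with the wrong orientation for what you need (you want the per-step factor bounded \emph{above} by $1-C^*\tfrac{\nu}{4}$, which follows from $h_3\ge\tfrac{\nu}{4}$ after renaming the constant), a harmless slip since $C^*$ is free.
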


\begin{proof}
From the explicit solutions in Lemma~\ref{r3h3eps3}, we directly see that   \[\Pi_3(\Sigma_3^{\txtin}) \subset \left\{r_3= \rho, \epsilon_3 = \delta, h_3 = \nu \right\} .\]
Next, we consider the \(x_3\)-equation of \eqref{K3_dynamics} for \(\epsilon_3 = 0 \) and $h_3 > 0$, which reads 
\[\bar{x}_3 = x_3+ h_3\left(x_3(1-x_3^2)\right) =: g(x_3, h_3)~. \]
The cubic function \(g(\cdot, h_3)\) is increasing between the stationary points at
\begin{equation} \label{hrestriction}
x_3 = \pm\sqrt{\tfrac{1+h_3}{h_3}} =:\pm \kappa ~.  
\end{equation} 
Taking \(h_3 \leq \frac{1}{3}\), we achieve that \(\kappa \geq 2 > 1 + \theta \).
Since the fixed point at \(x_3 =0\) is unstable we can deduce that the set \((0, \kappa ]  \) gets attracted to the fixed point at \(x_3 = 1\), and so does \(  \left[ 1-\theta, 1+\theta \right] \subset (0, \kappa ]\). 
	
We turn to giving estimates on the contraction rate  towards \(x_3 =1\) for different starting values in \([1-\theta, 1+\theta].\)	Note that the map \(g(\cdot, h_3)\) is increasing and concave on \( (0, \kappa] \). Since it is increasing, the subintervals \([1-\theta, 1)\) and \((1, 1+ \theta]\)  get mapped by \(g(\cdot, h_3)\) into themselves.
Due to its concavity, the function  \(g(\cdot, h_3)\) lies below the tangent at \(x_3 =1\), so that the contraction rate for values in \(\left(1, 1+ \theta \right]\) is at least as strong as the linear rate \(1-2h_3\) coming from the linearization around \(x_3 =1\). Concavity also yields that \(g(\cdot, h_3)\) lies above the secant on the interval \([1-\theta, 1]\). Thus  the slope of the corresponding secant gives an estimate for the contraction rate of points in \([1-\theta, 1)\) to the fixed point at \(x_3 = 1\).  

\begin{figure}[ht]
	\begin{center}
		\begin{overpic}[width=0.45\textwidth]{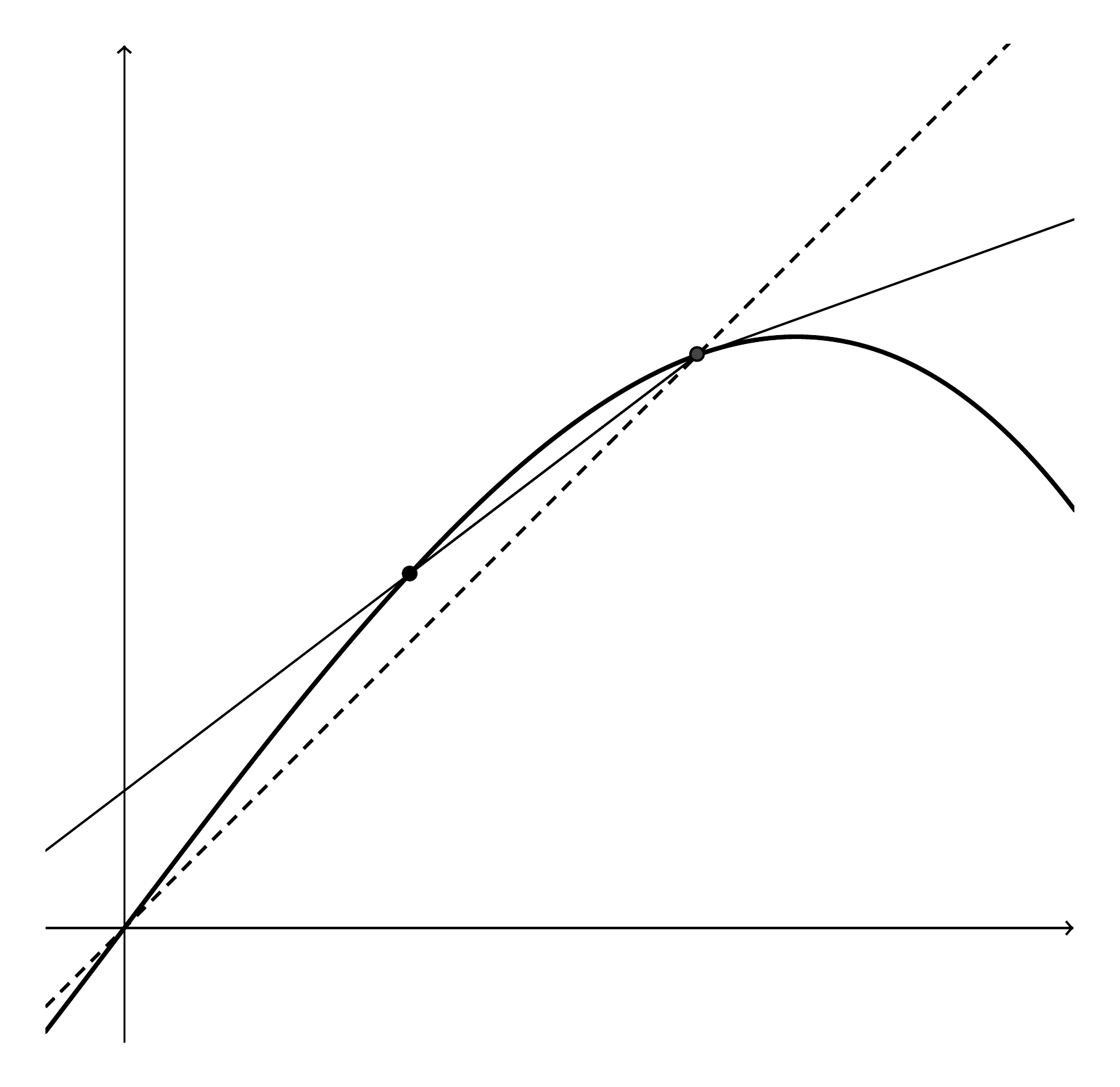}
			\footnotesize
			\put(98,15){\(x_3\)}
			\put(60,15){\(\bracevert\)}
			\put(61,8){\(1\)}
			\put(35,15){\(\bracevert\)}
			\put(33,8){\(1-\theta\)}
		\end{overpic}

		\caption{The map \(g(\cdot, h_3)\) together with the diagonal \(x_3 \mapsto x_3\) (dashed), the tangent at 1 and the secant over \([1-\theta, 1]\) .}\label{tangentsecant}
	\end{center}
\end{figure}

In more detail, linear interpolation between the points \(\left(1-\theta, g(1-\theta, h_3)\right)\) and \((1,1)\) yields the slope
	\begin{align*}
	 \frac{1- g(1-\theta, h_3)}{1- (1-\theta)}  &= \frac{1- \left[ (1-\theta) + h_3 \left( (1-\theta)(1-(1-\theta)^2)\right)  \right]}{1- (1-\theta)}  \\
	&= 1- h_3 \frac{(1-\theta)\left(1- (1-\theta)^2 \right) }{\theta } 
	= 1- h_3^* \frac{(1-\theta)\left(2\theta - \theta^2 \right) }{\theta } \\
	& = 
	1- h_3 (1-\theta)\left(2 - \theta \right) = 1- h_3 C^*(\theta) ~, 
	\end{align*}
where 	\begin{equation}\label{def:C*}
			C^*(\theta) := (1-\theta)(2-\theta)~.
		\end{equation} Note that \(0 < C^*(\theta) < 2\)	 and  \(C^*(\theta) \to 0\) as \(\theta \to 1\).

We conclude that, on the interval \([1-\theta, 1+\theta]\), the map \(g(\cdot, h_3)\) is contracting with constant \(1-h_3C^*\). The transition map \(\Pi_3\) defined on 
\[\Sigma_3^{\txtin} = \left\{x_3 \in \left[ 1-\theta, 1+\theta \right],~r_3 = \tfrac{\rho}{2},~\epsilon_3 = 16\delta,~h_3 = \tfrac{\nu}{4}\right\}\]
  is in \(x_3\)-direction a perturbation of the \(n^*\)-fold of \(g(\cdot, h_3)\), as we now have \(r_3 , \epsilon_3 > 0\).

 Also note that during the \( n^* = \frac{3}{4 \nu \delta} \) iterations that define \(\Pi_3\), the variable \(h_3\) lies in the interval \([\tfrac{\nu}{4}, \nu]\). Thus, if we choose \(\delta\) sufficiently small, we can achieve that \(\Pi_3\) is a contraction (in \(x_3\)-direction) with a rate of \(\frac{1}{2}\left(1-C^*\frac{\nu}{4}\right)^{\frac{3}{4 \nu \delta}}\).
Consequently the image  \(\Pi_3(\Sigma_3^{\textnormal{in}})\) has a \(x_3\)-width of at most \[ 2\theta \cdot \tfrac{1}{2} \left(1-C^*\tfrac{\nu}{4}\right)^{\frac{3}{4 \nu \delta}} \leq  \left(1-C^*\tfrac{\nu}{4}\right)^{\frac{3}{4 \nu \delta}} ~. \]
		
Since for \(\delta \to 0 \) the point \(M_{\txta, 3}^+  \cap \left\{r_3= \frac{\rho}{2}, \epsilon_3 = 16\delta, h_3 = \frac{\nu}{4} \right\}\) approaches \((1,\frac{\rho}{2}, 0, \frac{\nu}{4})\), so especially its \(x_3\) coordinate approaches \(1\), we can deduce that \(M_{\txta, 3}^+  \cap \left\{r_3= \frac{\rho}{2}, \epsilon_3 = 16\delta, h_3 = \frac{\nu}{4} \right\}\) lies in \(\Sigma_3^{\txtin}\) for \(\delta\) small enough. Moreover, since the manifold \(M_{\txta, 3}^+ \) is invariant under the forward iterations of~\eqref{K3_dynamics}, the point \(M_{\txta, 3}^+  \cap \left\{r_3= \rho, \epsilon_3 = \delta, h_3 = \nu \right\} \) lies within \(\Pi_3(\Sigma_3^{\txtin})\). 

Because \(M_{\txta, 3}^+  \cap \left\{r_3= \rho, \epsilon_3 = \delta, h_3 = \nu \right\} \) converges to  \((1,\rho, 0, \nu)\) as \(\delta \to 0\), we infer that for small \(\delta \) the point \(M_{\txta, 3}^+  \cap \left\{r_3= \rho, \epsilon_3 = \delta, h_3 = \nu \right\} \)  and the exponentially small set \(\Pi_3(\Sigma_3^{\textnormal{in}})\)  around this point lie within \(\Sigma_3^{\txtout} = \left\{  x_3 \in 1+ \rho^{-1} J,~r_3= \rho,~\epsilon_3 = \delta,~h_3 = \nu \right\}\). 
\end{proof}

\subsection{Blown-down dynamics}
\label{sec:blowdown}
As a last step, we will connect the individual results, obtained in each of the three charts, to prove Theorem~\ref{maintheorem}.

\begin{proof}[Proof of Theorem~\ref{maintheorem} (T1) and (T2)]
 Throughout the previous proofs we needed to make \(\delta\) sufficiently small; now we choose \(\delta_0 > 0\) such that all of the statements hold true for every \(\delta \in (0,\delta_0]\). This gives the value \(\epsilon_0 := \delta_0 \rho^4\).
 
 As a consequence of condition~\eqref{hrestriction} in the proof of Lemma~\ref{lem:transition3}, we assumed $\nu \leq \frac{1}{3 }$. Even stronger, Lemma \ref{lem:increasing_invariance}.\ref{lem:increasing} gives \(\nu \leq \frac{2}{15}\).
 For Lemma \ref{lem:transition1} we required \(\nu \leq \left({1 + \frac{|J|^2}{8\rho^2}}\right)^{-1}\). 
 Thus we can take \(h_0 := \min \left\{ \frac{2}{15 \rho^2},\frac{1}{\rho^2 + \frac{1}{8}|J|^2} \right\}\).

In the previous sections we have seen that 
\begin{equation}\label{eq:inclusion}
 \Sigma_2^{\txtin}  \subset \kappa_{12}\left(\Sigma_1^{\txtout}\right) \text{ ~~and~~ }  \Sigma_3^{\txtin}  \subset \kappa_{23}\left(\Sigma_2^{\txtout}\right) ~,
\end{equation}

so that we can concatenate the three transition maps \(\Pi_1, \Pi_2\) and \(\Pi_3\)~, using the appropriate coordinate changes in between, and define the map \(\tilde{\Pi}\) from \(\Sigma_1^{\txtin}\) to \(\Sigma_3^{\txtout}\)
\[\tilde{\Pi} := \Pi_3 \circ \kappa_{23} \circ \Pi_2 \circ \kappa_{12} \circ \Pi_1 ~. \]
In particular, we have seen from the analysis in the charts $K_1$, $K_2$ and $K_3$ (see Lemma~\ref{lem:transition1}, Theorem~\ref{mainresult} and Lemma~\ref{lem:transition3}) that $ M_{\txta}^0$, equalling $M_{\txta,1}^0$ in $K_1$ and $ M_{\txta,2}^0$ in $K_2$, is continued to $\Sigma_3^{\txtout}$ in the vicinity of $M_{\txta,3}^+$.
Additionally, we have \(\Delta_{\txtin} = K_1(\Sigma_2^{\txtin})  \) and \(\Delta_{\txtout}^+ = K_3(\Sigma_3^{\txtout})  \). Hence, by reverting to the original coordinates \((x,y,\epsilon, h)\), we obtain 
the transition map $\Pi^+$ from \(\Delta_{\txtin}\) to \(\Delta_{\txtout}^+\) given by 
\[ \Pi^+ := K_3 \circ \tilde{\Pi} \circ K_1^{-1} ~.\]

We have shown in Lemma \ref{lem:transition3} that the set \(\Pi_3 (\Sigma_3^{\txtin}) \subset \Sigma_3^{\txtout}\) has a \(x_3\)-width of 
\(
\left(1-C^*\frac{\nu}{4}\right)^{\frac{3}{4 \nu \delta}} \).
Hence, by transforming back to the blown-down coordinates, it is easy to see that the width in \(x\)- direction of 
 \(K_3(\Pi_3 (\Sigma_3^{\txtin})) \subset \Delta_{\txtout}^+ \)  is \( \bigo{ \left(1-C\cdot h\right)^{\frac{K}{\epsilon h}}} \) when we take \(K= \frac{3\rho^2}{4}\) and \(C = C^*(\theta)\frac{\rho^2}{4}\)  (with \(\theta\) and \(C^*\) from  \eqref{Sigma3in}  and \eqref{def:C*} respectively).

Furthermore, Lemma~\ref{lem:transition1} implies that \(K_1(\Sigma_1^{\txtin})\) contains a point of the slow manifold \(S_{a,\epsilon, h}^0\)~. On the other hand, Lemma \ref{lem:transition3}  shows that the exponentially small set \(K_3(\Pi_3 (\Sigma_3^{\txtin}))\) contains a point of the slow manifold \(S_{a,\epsilon, h}^+\) , and, due to \eqref{eq:inclusion} and the definition of \(\Pi^+\), we also have \[  \Pi^+(\Delta_{\txtin}) \subset K_3(\Pi_3 (\Sigma_3^{\txtin}))  \subset  \Delta_{\txtout}^+ ~. \]
This completes the proof of the statement in Theorem \ref{maintheorem} for the case \(\lambda > 0\). 

Finally, when \(\lambda < 0 \), observe that under the change of variables \(x \mapsto -x\) the \(x\)-equation in system \eqref{discrete} gets transformed into \[-\bar{x} = -x+h \left( -x(y-x^2) + \lambda\epsilon \right), \]
which is equivalent to
\[\bar{x} = x+h \left( x(y-x^2) - \lambda\epsilon \right) ~.
 \] 
Hence, the analysis is the same as for positive \(\lambda\), with the same outcome under symmetric change of variables. 
\end{proof}

\subsection{Canard Case}
\label{canards}

The analysis in the case of \(\lambda = 0\) may be carried out without a blow-up transformation.  Hence, we treat the proof of Theorem~\ref{maintheorem} (T3) separately here.
\begin{proof}[Proof of Theorem~\ref{maintheorem} (T3)]
One observes, that the system

\begin{equation*}
P: 
\begin{pmatrix}
x \\ y
\end{pmatrix} \mapsto 
\begin{pmatrix}
\bar{x} \\ \bar{y}
\end{pmatrix} = 
\begin{pmatrix}
x+h \left( x(y-x^2) \right)  \\
y+h\epsilon
\end{pmatrix}
\end{equation*}

keeps \(\{ x = 0\}\) invariant for all \(\epsilon \geq 0\), since there is no dependency on \(\epsilon\) in the \(x\)-equation.
This means that we have \(S_{a, \epsilon, h}^0 = S_a^0 \) and \(S_{r, \epsilon, h}^0 = S_r^0\) on the domain of our analysis (see discussion around~\eqref{eq:condbigcutoff}) so that \(S_{a, \epsilon, h}^0  \) and \(S_{r, \epsilon, h}^0\) are connected.
The connecting trajectory \(\gamma\) starting in \(\Delta_{\txtin}\) is explicitly given by \(\gamma(n) = \big(x(n), y(n)\big) = \big(0,-\rho^2 + n h\epsilon\big)\).

The linearization along the trajectory $\gamma$ is characterized by the variational equation
\begin{equation}  \label{varequ}
v(n+1) = \begin{pmatrix}
1 +  h y_n  & 0 \\
0 & 1
\end{pmatrix} v(n), \quad v(n) \in \mathbb{R}^2, \fa n \in \mathbb{N}\,.
\end{equation}
While the fixed point $w = (0,1)^{\top}$ of~\eqref{varequ} corresponds with the centre-direction along $\gamma$, the solution of~\eqref{varequ} starting at $v(0) = (1,0)^{\top}$ corresponds with the transversal hyperbolic direction and can be explicitly solved to be 
$$v(n) = (v_1(n), v_2(n))^{\top} = (\prod_{k=0}^{n-1} (1+ h(- \rho^2 + kh\epsilon)), 0)^{\top}.$$
Let us for simplicity assume that we have \(\frac{2\rho^2}{\epsilon h} =:N \in 4\mathbb{N} \). In particular, we set \(\Delta_{\txtout}^0 = \{ (x, \rho^2) | ~ x\in J \}\), where clearly \(\gamma(N) \in \Delta_{\txtout}^0\). 
We have already seen in Section \ref{sec:entering} that due to the cubic structure the contraction rates in \(x\)-direction towards the locally stable fixed point for \(y<0\) are at least as strong as the linear rates achieved by linearization around \(x=0\).  Similarly one also observes that the linear rates give a bound for the expansion rate for positive values of \(y\). 
Hence, the contraction and expansion of trajectories from  $\Delta_{\textnormal{in}}$ to a neighborhood of $\Delta_{\txtout}^0$ can be estimated from above by the linear rate $\mu$ 
along the trajectory \(\gamma\) which satisfies
\begin{align*}
	\mu &= v_1(N) = \prod_{k=0}^{N-1}  1+ h (-\rho^2+k h\epsilon)  \leq \prod_{k=0}^{N}  1+ h (-\rho^2+k h\epsilon)  \nonumber \\ 
	&= \prod_{k=0}^{ N/2} \big( 1+ h (-\rho^2+k h\epsilon)\big)\big( 1+ h (\rho^2-k h\epsilon)\big) = \prod_{k=0}^{ N/2 } \big( 1 - h^2 (\rho^2-k h\epsilon)^2 \big) \nonumber \\ 
	&\leq \prod_{k=0}^{ N/4 } \big( 1 - h^2 (\rho^2-k h\epsilon)^2 \big) \leq \prod_{k=0}^{ N/4 } \big( 1 - h^2 (\tfrac{\rho^2}{2})^2 \big) =\big( 1 -  h^2 (\tfrac{\rho^2}{2})^2 \big) ^{ N/4 }.
\end{align*}
Hence, we can give the bound  
\begin{equation*}
	\mu \leq \Big(1- h^2 (\tfrac{\rho^2}{2})^2\Big)^{\tfrac{\rho^2}{2\epsilon h}} < 1,
\end{equation*}
meaning that the transition map \(\Pi^0\) is contractive for the canard case since the contraction rates along \(S_a^0\) prevail over the expansion rates along \(S_r^0\). Hence, the claim follows.
\end{proof}
However, note that  \[ \lim_{h \to 0} \Big(1- h^2 (\tfrac{\rho}{2})^2\Big)^{\tfrac{2\rho}{\epsilon h}} = 1 .\]

Thus, as expected, in the limit $h \to 0$ one obtains the stability behaviour of the corresponding continuous-time system where contraction and expansion exactly compensate each other. It is still remarkable that the Euler method not only preserves the stability behaviour for trajectories close to the canard but even enhances stability as compared to the continuous-time case for sufficiently small $h > 0$. We already observed this surprising effect in the case of transcritical canards (cf. \cite{EngelKuehn18}) but emphasize that in other similar situations, like the folded canard (cf. \cite{EKPS18}), the Euler method has clearly unfavourable stability properties.
\subsection{Higher Order Terms}\label{sec:hot}

We briefly discuss how our results can be generalized when higher order terms  \(h_1(x,y,\epsilon) = \bigo{x^2y, xy^2, \epsilon x, \epsilon y, \epsilon^2}\) and \( h_2(x,y,\epsilon) = \bigo{x,y,\epsilon}\) from \eqref{eq:normalform} are included.
The corresponding discretized dynamical system reads 
\begin{equation}\label{eq:disc_with_hot}
P: 
\begin{pmatrix}
x \\ y \\ \epsilon \\ h
\end{pmatrix} \mapsto 
\begin{pmatrix}
\bar{x} \\ \bar{y} \\ \bar{\epsilon} \\ \bar{h}
\end{pmatrix} = 
\begin{pmatrix}
x+h \big( x(y-x^2)+\lambda\epsilon + \bigo{x^2y, xy^2, \epsilon x, \epsilon y, \epsilon^2} \big)  \\
y+h\epsilon(1+\bigo{x,y,\epsilon}) \\ \epsilon \\ h
\end{pmatrix}\,.
\end{equation}

Note that due to the dependence of \(\bar{y}\) on \(x\), points in the image of \(\Delta_{\textnormal{in}}\) under iteration of \(P\)  will not share the same \(y\)-coordinate. Thus we cannot define the transition mappings \(\Pi^{\pm, 0}\) by just a fixed number of iterations of \(P\), but instead pointwise for each initial value \((x_0,-\rho^2) \in \Delta_{\textnormal{in}}\) by 
\begin{equation*}
\Pi^{\pm,0} (x_0,-\rho^2) = P^{n^*(x_0)} (x_0, -\rho^2)~, ~~\text{ where } n^*(x_0) = \argmin_{n \in \mathbb{N}} \dist(P^n(x_0, -\rho^2) , \{y=\rho^2\} )~.
\end{equation*}

We see that for a sufficiently small choice of \(\rho\) every trajectory will get \(\epsilon h\) close to \(\{y= \rho^2\}\). In \cite{EngelKuehn18}  this had already to be taken into account for the case of a normal form without higher order terms. 

Firstly, let us discuss the problem for fixed $\lambda \neq 0$. It is an important benefit of the blow-up method that in entering and exiting charts higher order terms have no significant impact. 
In more detail, we transform system \eqref{eq:disc_with_hot} by \(K_1\), proceeding as in Section~\ref{sec:entering}, to obtain
\begin{equation*}
	\bar{r}_1^2 = r_1^2  \left(1-h_1\epsilon_1 + h_1 \epsilon_1 \bigo{r_1x_1\epsilon_1, r_1^2\epsilon_1, r_1^4\epsilon_1^2} \right), 
\end{equation*}
This relation yields
\begin{equation*}
\bar{r}_1\bar{x}_1= r_1x_1 + r_1^{-2}h_1 \left(r_1x_1(-r_1^2-r_1^2x_1^2)+\lambda r_1^4\epsilon_1 + \bigo{x_1^2r_1^4, x_1r_1^5, x_1r_1^5\epsilon_1, r_1^6\epsilon_1} \right).
\end{equation*}
which simplifies to 
\begin{equation*}
\bar{x}_1= \frac{r_1}{\bar{r}_1}\left[ x_1 + h_1 \left(x_1(-1-x_1^2)+\lambda r_1\epsilon_1 + \bigo{x_1^2r_1, x_1r_1^2, x_1r_1^2\epsilon_1, r_1^3\epsilon_1} \right) \right].
\end{equation*}

Consequently the transformed system in $K_1$ can be written as
\begin{align} \label{K1_dynamics_hot}
\begin{array}{rcrcl}
&\bar{x}_1& &=& \big(1-h_1\epsilon_1 + \bigo{r_1} \big)^{-\frac{1}{2}} \big[ x_1+ h_1\big(x_1(-1-x_1^2)+\lambda r_1\epsilon_1 + \bigo{r_1} \big) \big] ~,\\
&\bar{r}_1& &=& \big(1-h_1\epsilon_1 + \bigo{r_1} \big)^{\frac{1}{2}}~ r_1~,\\
&\bar{\epsilon}_1& &=& \big(1-h_1\epsilon_1 + \bigo{r_1} \big)^{-2} \epsilon_1~,\\
&\bar{h}_1& &=&\big(1-h_1\epsilon_1 + \bigo{r_1} \big)~ h_1~.
\end{array}
\end{align}
For \(r_1 = 0\), this system is identical to~\eqref{K1_dynamics}. Hence, for sufficiently small \(r_1\), we still obtain the existence of a center-stable manifold \(M_{a,1}^0\) at the point \(p_a^0(0) \) and the consequences thereof. A small choice of \(r_1\) means that we have to restrict \(\rho\) to sufficiently small values.
For the exiting chart \(K_3\), the situation is similar. 

In the rescaling chart, however, the higher order terms may not be bypassed that easily, but the strategy from Section~\ref{sec:rescaling} can be adapted. 
As in Section \ref{sec:rescaling}, we still have \(\bar{r}_2 = r_2\) and \(\bar{h}_2 = h_2\).
The remaining equations of \eqref{eq:disc_with_hot} transform to
\begin{equation*}
\bar{r}_2\bar{x}_2= r_2x_2 + r_2^{-2}h_2 \left(r_2x_2(r_2^2y_2-r_2^2x_2^2)+\lambda r_2^4 + \bigo{r_2^4x_2^2y_2, r_2^5x_2y_2^2, r_2^5x_2, r_2^6y_2, r_2^8} \right) ,
\end{equation*}
and
\begin{equation*}
\bar{r}_2^2\bar{y}_2 = r_2^2y_2 + r_2^{-2}h_2 r_2^4 \big(1+\bigo{r_2x_2, r_2^2y_2, r_2^4}\big) ~,
\end{equation*}
which can be simplified and desingularized into 
\begin{equation}\label{eq:x2_with_hot}
\bar{x}_2=  x_2 + h_2 \left(x_2(y_2-x_2^2)+\lambda r_2 + \bigo{r_2x_2^2y_2, r_2^2x_2y_2^2, r_2^2x_2, r_2^3y_2, r_2^5} \right) ~, 
\end{equation}
\begin{equation*}
\bar{y}_2 = y_2 + h_2 \big(1+\bigo{r_2x_2, r_2^2y_2, r_2^4}\big) ~.
\end{equation*}

The following arguments will not only require small \(\epsilon\) but also sufficiently small \(\rho\), so that the impact of normal form higher order terms can be controlled and the dynamics are determined by the remaining terms.  
Since the small parameter \(\delta\) incorporates \(\rho\) and \(\epsilon\), it is more apparent in the original not blown up coordinates \((x,y,\epsilon, h)\) how the choice of \(\rho\) determines the considered neighbourhood of the origin. 
Note that in the following small \(\rho\) and \(\epsilon\) mean that the statements hold for sufficiently small fixed \(\rho\) and for all positive \(\epsilon\) below some sufficiently small threshold. 
Restricting \(\rho\) and \(\epsilon\), we can assure upwarded movement in \(y_2\)-direction taking \(\bigo{\nu^{-1}\delta^{-1}}\) steps to travel through the domain considered in the second chart.

Our approach in Section \ref{sec:rescaling} relied heavily on the curve of fixed points \(x_2^*(y_2)\) introduced in Lemma \ref{lem:curve_of_fp}. In the more general setting involving the higher order terms, a curve corresponding to \(x_2^*(y_2)\) persists. In other words, one can show that for fixed, negative values of \(y_2\) there is exactly one positive fixed point for equation \eqref{eq:x2_with_hot}, given that \(\rho\) and \(\epsilon\) (and thus \(\delta\)) are sufficiently small. 
Note that this can only be accomplished for \(y_2\) outside an interval of size \(\bigo{r_2^2}\), since otherwise the term  \(x_2y_2\) is of order   \(\bigo{x_2r_2^2}\) and therefore does not dominate additional terms of that order any longer.
Using the curve of fixed points a result analogous to Proposition~\ref{prop:I1_to_I2} can be shown, which implies that trajectories will enter the quadrant \(\{x>0, y<0\}\). 
Equivalent statements to those of Lemma \ref{lem:increasing_invariance} can also be obtained for \(\rho \) and \(\epsilon\) small enough. This means that for sufficiently small \(\nu\)  the mapping \(x_2 \mapsto \bar{x}_2\) is monotone.
Using the monotonicity one then easily checks that \(\{x_2 \geq 0 \}\) is invariant under \eqref{eq:x2_with_hot}, simply by plugging in \(x_2 = 0\). In a similar manner one can ensure that trajectories leave the rectangular set \(\{x_2 \in [0, \delta^{-\frac{1}{4}}], y_2 \in [-\frac{1}{4}\delta^{-\frac{1}{2}},  \frac{1}{4}\delta^{-\frac{1}{2}} ] \}\) only through its upper boundary. 

Moreover, one can show that in the quadrant \(\{ x \geq 0, y \geq 0\}\) trajectories are bounded away from the \(y_2\)-axis, independently from \(\epsilon\). The corresponding result is found in Proposition~\ref{prop:I3_to_I4}.
Hence, we may deduce that all trajectories will end up at a \(y_2\)-height close to \(\frac{1}{4}\delta^{-\frac{1}{2}}\) with positive \(x_2\)-value, bounded away from 0 and smaller than \(\delta^{-\frac{1}{4}}\). In other words, we obtain a result similar to Theorem \ref{mainresult}.
Transforming the exiting set into \(K_3\)-coordinates allows to proceed similarly as in Section~\ref{sec:exiting}. As we have already seen in chart \(K_1\), the higher order terms do not change the behaviour for sufficiently small \(\rho\). Summarizing, we deduce that Theorem \ref{maintheorem}, (T1)  and (T2), can be transferred to the general setting of \eqref{eq:disc_with_hot}.

Furthermore, note that in the general case of the normal form~\eqref{eq:normalform} including higher order terms, the value of $\lambda$ close to $0$ giving a canard changes with the value of  $\epsilon$. For continuous time, this phenomenon is studied in detail for canards in folds in \cite{ks2011} and discussed for the transcritical and pitchfork case in \cite[Remark 2.2 and Remark 4.1]{ks2001/2}.
Using a Melnikov computation, one may show the
existence of a function $\lambda_c(\epsilon^{1/4})$ with $\lambda_c(0) = 0$ such that for $\lambda =  \lambda_c(\epsilon^{1/4})$ the slow manifold
$S_{\txta, \epsilon}^- $ extends to $S_{\txtr, \epsilon}^+$  for sufficiently small $\epsilon$. In Theorem~\ref{maintheorem} (T3), we only treated the case $\lambda_c(\epsilon^{1/4}) \equiv 0$ since we did not take into account perturbations from higher order terms. In order to obtain an analogous result to the ODE case, a treatment of the more general problem~\eqref{eq:disc_with_hot} about $\lambda =0$ requires a discrete Melnikov computation, which is more complicated. Therefore, we are going to treat the general canard problem in the separate study \cite{EKPS18}.

%
%
%

\newpage

\bibliographystyle{plain}
\bibliography{mybibfile}

\begin{thebibliography}{10}

\bibitem{Du78}
F.~Dumortier.
\newblock {\em Singularities of vector fields}, volume~32 of {\em Monograf\'\i
  as de Matem\'atica [Mathematical Monographs]}.
\newblock Instituto de Matem\'atica Pura e Aplicada, Rio de Janeiro, 1978.

\bibitem{Du93}
F.~Dumortier.
\newblock Techniques in the theory of local bifurcations: blow-up, normal
  forms, nilpotent bifurcations, singular perturbations.
\newblock In {\em Bifurcations and periodic orbits of vector fields
  ({M}ontreal, {PQ}, 1992)}, volume 408 of {\em NATO Adv. Sci. Inst. Ser. C
  Math. Phys. Sci.}, pages 19--73. Kluwer Acad. Publ., Dordrecht, 1993.

\bibitem{DuRo96}
F.~Dumortier and R.~Roussarie.
\newblock Canard cycles and center manifolds.
\newblock {\em Mem. Amer. Math. Soc.}, 121(577):x+100, 1996.
\newblock With an appendix by Cheng Zhi Li.

\bibitem{EngelKuehn18}
M.~Engel and C.~Kuehn.
\newblock Discretized fast-slow systems near transcritical singularities.
\newblock {\em arXiv:1806.06561}, 2018.

\bibitem{EKPS18}
M.~Engel, C.~Kuehn, M.~Petrera, and Y.~Suris.
\newblock Discretized fast-slow systems with canard solutions.
\newblock {\em preprint}, 2018.

\bibitem{Fenichel4}
N.~Fenichel.
\newblock Geometric singular perturbation theory for ordinary differential
  equations.
\newblock {\em J. Differential Equations}, 31:53--98, 1979.

\bibitem{GucwaSzmolyan}
I.~Gucwa and P.~Szmolyan.
\newblock Geometric singular perturbation analysis of an autocatalator model.
\newblock {\em Discr. Cont. Dyn. Syst. S}, 2(4):783--806, 2009.

\bibitem{HPS77}
M.~W. Hirsch, C.~C. Pugh, and M.~Shub.
\newblock {\em Invariant manifolds}.
\newblock Lecture Notes in Mathematics, Vol. 583. Springer-Verlag, Berlin-New
  York, 1977.

\bibitem{Jones}
C.~K. R.~T. Jones.
\newblock Geometric singular perturbation theory.
\newblock In {\em Dynamical systems ({M}ontecatini {T}erme, 1994)}, volume 1609
  of {\em Lecture Notes in Math.}, pages 44--118. Springer, Berlin, 1995.

\bibitem{ks2011}
M.~Krupa and P.~Szmolyan.
\newblock Extending geometric singular perturbation theory to nonhyperbolic
  points---fold and canard points in two dimensions.
\newblock {\em SIAM J. Math. Anal.}, 33(2):286--314, 2001.

\bibitem{ks2001/2}
M.~Krupa and P.~Szmolyan.
\newblock Extending slow manifolds near transcritical and pitchfork
  singularities.
\newblock {\em Nonlinearity}, 14(6):1473--1491, 2001.

\bibitem{KuehnUM}
C.~Kuehn.
\newblock Normal hyperbolicity and unbounded critical manifolds.
\newblock {\em Nonlinearity}, 27(6):1351--1366, 2014.

\bibitem{ku2015}
C.~Kuehn.
\newblock {\em Multiple time scale dynamics}, volume 191 of {\em Applied
  Mathematical Sciences}.
\newblock Springer, Cham, 2015.

\bibitem{KuehnHyp}
C.~Kuehn.
\newblock A remark on geometric desingularization of a non-hyperbolic point
  using hyperbolic space.
\newblock {\em J. Phys. Conf. Ser.}, 727:012008, 2016.

\bibitem{DeMaesschalckDumortier4}
P.~De Maesschalck and F.~Dumortier.
\newblock Time analysis and entry-exit relation near planar turning points.
\newblock {\em J. Difference Equ. Appl.}, 215:225--267, 2005.

\bibitem{DeMaesschalckDumortier7}
P.~De Maesschalck and F.~Dumortier.
\newblock Singular perturbations and vanishing passage through a turning point.
\newblock {\em J. Differential Equations}, 248:2294--2328, 2010.

\bibitem{DeMaesschalckWechselberger}
P.~De Maesschalck and M.~Wechselberger.
\newblock Neural excitability and singular bifurcations.
\newblock {\em J. Math. Neurosci.}, 5(1):16, 2015.

\bibitem{ns2013}
K.~Nipp and D.~Stoffer.
\newblock {\em Invariant manifolds in discrete and continuous dynamical
  systems}, volume~21 of {\em EMS Tracts in Mathematics}.
\newblock European Mathematical Society (EMS), Z\"urich, 2013.

\bibitem{WigginsIM}
S.~Wiggins.
\newblock {\em Normally hyperbolic invariant manifolds in dynamical systems},
  volume 105 of {\em Applied Mathematical Sciences}.
\newblock Springer-Verlag, New York, 1994.
\newblock With the assistance of Gy\"orgy Haller and Igor Mezi\'c.

\end{thebibliography}

\end{document}